\title{Isomorphism classification of Leary--Minasyan groups}
\author{Motiejus Valiunas}
\address{Instytut Matematyczny, Uniwersytet Wroc{\l}awski, plac Grunwaldzki 2/4, 50-384 Wroc{\l}aw, Poland}
\email{valiunas@math.uni.wroc.pl}
\keywords{Commensurating HNN-extensions, isomorphism classification}
\subjclass[2010]{20E06, 20F65}
\theoremstyle{plain}
\newtheorem{lem}{Lemma}[section]
\newtheorem{prop}[lem]{Proposition}
\newtheorem{thm}[lem]{Theorem}
\newtheorem{cor}[lem]{Corollary}
\theoremstyle{definition}
\newtheorem{ex}[lem]{Example}
\newtheorem*{ack}{Acknowledgements}
\theoremstyle{remark}
\newtheorem{rmk}[lem]{Remark}
\newcommand{\Q}{\mathbb{Q}}
\newcommand{\Z}{\mathbb{Z}}
\newcommand{\R}{\mathbb{R}}
\newcommand{\C}{\mathbb{C}}
\newcommand{\1}[1]{\overline{#1}}
\DeclareMathOperator\Stab{Stab}
\DeclareMathOperator\Gal{Gal}
\DeclareMathOperator\lcm{lcm}
\DeclareMathOperator\Aut{Aut}
\begin{document}

\begin{abstract}
Recently \cite{leary-minasyan}, I.~J.~Leary and A.~Minasyan studied the class of groups $G(A,L)$ defined as commensurating HNN-extensions of $\Z^n$. This class, containing the class of Baumslag--Solitar groups, also includes other groups with curious properties, such as being CAT(0) but not biautomatic. In this paper, we classify the groups $G(A,L)$ up to isomorphism.
\end{abstract}

\maketitle

\section{Introduction}

Let $n \geq 0$, let $A \in GL_n(\Q)$, and let $L \leq \Z^n \cap A^{-1}(\Z^n)$ be a finite index subgroup. In \cite{leary-minasyan}, I.~J.~Leary and A.~Minasyan defined a group $G(A,L)$ by the presentation
\begin{equation} \label{eq:pres}
G(A,L) = \langle x_1,\ldots,x_n,t \mid [x_i,x_j]=1 \text{ for } 1 \leq i < j \leq n, t\mathbf{x}^{\mathbf{v}}t^{-1} = \mathbf{x}^{A\mathbf{v}} \text{ for } \mathbf{v} \in L \rangle,
\end{equation}
where we write $\mathbf{x}^{\mathbf{w}} := x_1^{w_1} \cdots x_n^{w_n}$ for $\mathbf{w} = (w_1,\ldots,w_n) \in \Z^n$. We refer to the group $G(A,L)$ as a \emph{Leary--Minasyan group}. The class of Leary--Minasyan groups contains the class of Baumslag--Solitar groups: indeed, for $n = 1$, if $L = p\Z$ and $A = \begin{pmatrix} q/p \end{pmatrix} \in GL_1(\Q)$ for some $p,q \in \Z_{\neq 0}$, then $G(A,L) = \langle x,t \mid tx^pt^{-1} = x^q \rangle \cong BS(p,q)$.

Baumslag--Solitar groups often arise as examples or counterexamples: for instance, they were constructed in \cite{bs} to give examples of finitely generated one-relator non-Hopfian groups, and also give finitely presented examples of groups that are Hopfian but not residually finite \cite{bs,meskin,cl} or have infinitely generated automorphism groups \cite{cl}. Likewise, Leary--Minasyan groups were constructed in \cite{leary-minasyan} to give first examples of CAT(0) groups that are not biautomatic. This has prompted the study of both of these classes of groups.

In this paper, we study the conditions under which two given Leary--Minasyan groups are isomorphic. In the case $n = 1$, i.~e.\ for Baumslag--Solitar groups, the isomorphism classification is well known: in particular, given any $p,q,\1p,\1q \in \Z_{\neq 0}$, we have $BS(p,q) \cong BS(\1p,\1q)$ if and only if $\{ \1p,\1q \} = \{ \varepsilon p,\varepsilon q \}$ for some $\varepsilon \in \{ \pm 1 \}$ \cite{moldavanskii}. In \cite[Question~12.3]{leary-minasyan}, I.~J.~Leary and A.~Minasyan asked about classification of the groups $G(A,L)$ up to isomorphism. The following result gives an answer to this problem.

\begin{thm} \label{thm:main}
Let $n,\1n \geq 0$, let $A \in GL_n(\Q)$, $\1A \in GL_{\1n}(\Q)$, and let $L \leq \Z^n \cap A^{-1}(\Z^n)$, $\1L \leq \Z^{\1n} \cap \1A^{-1}(\Z^{\1n})$ be finite index subgroups. Then $G(A,L) \cong G(\1A,\1L)$ if and only if $n = \1n$ and at least one of the following three conditions holds:
\begin{enumerate}[label=\textup{(\roman*)}]
\item \label{it:main-gen} there exists a matrix $B \in GL_n(\Z)$ such that either $\1A = BAB^{-1}$ and $\1L = BL$, or $\1A = BA^{-1}B^{-1}$ and $\1L = BAL$;
\item \label{it:main-metab} either $L = \Z^n$ or $AL = \Z^n$, either $\1L = \Z^n$ or $\1A\1L = \Z^n$, and there exists a matrix $B \in GL_n(\Q)$ such that $\1A = BA^\varepsilon B^{-1}$ for some $\varepsilon \in \{ \pm 1 \}$ and $\bigcup_{j \in \Z} A^j(\Z^n) = \bigcup_{j \in \Z} A^jB^{-1}(\Z^n)$;
\item \label{it:main-polyc} $L = AL = \1L = \1A\1L = \Z^n$, and there exists a matrix $C \in GL_{n-1}(\Z)$ of order $m < \infty$ such that $A$ and $\1A$ are conjugate in $GL_n(\Z)$ to $\begin{pmatrix} 1 & \mathbf{0}^T \\ \mathbf{u} & C \end{pmatrix}$ and $\begin{pmatrix} 1 & \mathbf{0}^T \\ \mathbf{u} & C^q \end{pmatrix}$, respectively, for some $\mathbf{u} \in \Z^{n-1}$ and some $q \in \Z$ with $\gcd(q,m) = 1$.
\end{enumerate}
\end{thm}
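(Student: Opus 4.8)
The plan is to prove the "if" direction by exhibiting isomorphisms, and the "only if" direction by recovering the data $(n,A,L)$ up to the stated ambiguity. For the easy implications: condition \ref{it:main-gen} holds because a matrix $B\in GL_n(\Z)$ induces an automorphism of $\Z^n=\langle x_1,\dots,x_n\rangle$ which extends to an isomorphism $G(A,L)\to G(BAB^{-1},BL)$ fixing $t$, while $t\mapsto t^{-1}$ gives $G(A,L)\cong G(A^{-1},AL)$ (it merely interchanges the two associated subgroups of the HNN extension), and composing these yields both alternatives. For \ref{it:main-metab}, after using the latter isomorphism to reduce to $L=\Z^n$, we have $G(A,\Z^n)=N\rtimes_A\Z$ with $N:=\bigcup_{j\in\Z}A^j(\Z^n)\le\Q^n$, and a matrix $B$ as in \ref{it:main-metab} maps $N$ onto the analogous subgroup $\1N$ for $\1A$ while intertwining the $\Z$-actions, so $N\rtimes_A\Z\cong\1N\rtimes_{\1A}\Z$. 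For \ref{it:main-polyc}, after absorbing the $GL_n(\Z)$-conjugations (which is \ref{it:main-gen}) we may take $A=\begin{pmatrix}1&\mathbf0^T\\\mathbf u&C\end{pmatrix}$ and $\1A=\begin{pmatrix}1&\mathbf0^T\\\mathbf u&C^q\end{pmatrix}$; then $G(A,\Z^n)$ and $G(\1A,\Z^n)$ are both extensions of $\Z^2$ by $\Z^{n-1}$, on which $\Z^2$ acts through $\langle C\rangle=\langle C^q\rangle$ with the same coupling class recorded by $\mathbf u$, and using $\gcd(q,m)=1$ one writes down an explicit isomorphism between them — a routine but slightly delicate computation.

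For the "only if" direction, first note that $n$ is an isomorphism invariant, e.g.\ via a rank/dimension quantity computed in \cite{leary-minasyan} (such as the cohomological dimension, or the maximal Hirsch length of a virtually polycyclic subgroup), so we may assume $n=\1n$. Next, being metabelian and being polycyclic are isomorphism invariants and, by \cite{leary-minasyan} (or directly, via the nesting of the conjugates of $\Z^n$ in $G$), correspond respectively to [$L=\Z^n$ or $AL=\Z^n$] and [$L=AL=\Z^n$]; so if $G:=G(A,L)\cong\1G:=G(\1A,\1L)$ then both lie in the same one of the three regimes: non-metabelian, metabelian non-polycyclic, polycyclic. In the \textbf{non-metabelian case}, the main point — and, I expect, the main obstacle of the whole proof — will be to show that the HNN splitting of $G$ with vertex group $\Z^n$ is rigid (via Bass--Serre/JSJ rigidity together with the structural results of \cite{leary-minasyan}), so that any isomorphism $G\to\1G$ carries the $G$-tree to the $\1G$-tree equivariantly, hence maps a conjugate of $\Z^n$ onto a conjugate of $\Z^n$ and, after an inner adjustment, maps $t$ to $t^{\pm1}$ times a vertex-group element; transporting the edge group and the conjugation action of $t$ through this identification, and absorbing the sign via $G(A,L)\cong G(A^{-1},AL)$, then yields \ref{it:main-gen}.

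In the \textbf{metabelian non-polycyclic case}, we may take $L=\Z^n$ (so that the divisibility conditions of \ref{it:main-metab} hold), whence $G=N\rtimes_A\Z$ with $N\le\Q^n$ a strictly increasing union of copies of $\Z^n$; since $|\det A|\ge2$, $A$ is not quasi-unipotent, so $N$ is characteristic in $G$ (it is the Fitting subgroup) and is preserved by any isomorphism. The induced isomorphism $N\to\1N$ intertwines the $A$- and $\1A^{\pm1}$-actions; extending it $\Q$-linearly to a matrix $B\in GL_n(\Q)$ and recording that the lattice $\Z^n\le N$ is carried into $\1N$ gives \ref{it:main-metab}. In the \textbf{polycyclic case} ($A,\1A\in GL_n(\Z)$, $G=\Z^n\rtimes_A\Z$), if $A$ is not quasi-unipotent then the Fitting subgroup is again exactly $\Z^n$, so an isomorphism restricts to an element of $GL_n(\Z)$ conjugating $A^{\pm1}$ to $\1A$ — this is \ref{it:main-gen}. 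If $A$ is quasi-unipotent, then $G$ is virtually nilpotent and $\Z^n$ need no longer be canonical (its Fitting subgroup now has finite index), so one must analyse the structure by hand: decomposing $\Q^n$ along the generalized $1$-eigenspace of $A$ isolates a canonical finite-cyclic invariant, and matching it across the isomorphism yields either \ref{it:main-gen} or the reparametrization of \ref{it:main-polyc}. This last piece of bookkeeping — the origin of the somewhat unexpected condition \ref{it:main-polyc} — is the other place where I expect genuine difficulty.
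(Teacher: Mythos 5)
Your outline matches the paper's global architecture: divide into non-metabelian, metabelian non-polycyclic, and polycyclic regimes; show the regime is an isomorphism invariant (the paper does this via Lemma~\ref{lem:polycyclic}); and recover the data in each regime. Two pieces of the proposal also coincide with, or are a reasonable variant of, what the paper does: the ``if'' direction for \ref{it:main-gen} and \ref{it:main-metab}, and the use of a characteristic abelian normal subgroup in the metabelian case. Your identification of $N=\bigcup_j A^j(\Z^n)$ as the Fitting subgroup (using $\lvert\det A\rvert\ge 2$) is a legitimate alternative to the paper's mechanism, which instead characterises $K$ as the set of \emph{elliptic elements} (Proposition~\ref{prop:elliptic}); the paper's route is more uniform because the same Proposition~\ref{prop:elliptic} underpins the non-metabelian case too.

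However, there are two concrete gaps exactly where you flag ``genuine difficulty,'' and in both cases the difficulty is not merely technical but requires ideas absent from the proposal. In the non-metabelian case, you assert that ``Bass--Serre/JSJ rigidity'' gives a $\Phi$-equivariant tree isomorphism, and then read off \ref{it:main-gen}. But the relevant rigidity theorem (Forester, Theorem~\ref{thm:forester}) requires a slide-free hypothesis which \emph{fails} whenever $L\subseteq AL$ or $AL\subseteq L$; so the equivariant tree isomorphism cannot simply be invoked. The paper first pins down $\Phi(H)=\1H$ (up to inner automorphism) purely from the algebraic characterisation of elliptic elements (Proposition~\ref{prop:elliptic} and Lemma~\ref{lem:large}\ref{it:large-H}); it then recovers the unordered pair $\{L,AL\}$ from the centraliser set $H_1=\{h\in H:C_G(h)\ne H\}=\{\mathbf{x}^{\mathbf{v}}:\mathbf{v}\in L\cup AL\}$ and the relation $H_1\subseteq Z(K)$ (Lemma~\ref{lem:large}\ref{it:large-H1},\ref{it:large-H1c}); only in the subcase $L\nsubseteq AL$, $AL\nsubseteq L$ is Forester's theorem used to resolve the ordering of the pair. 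In the remaining subcases ($L=AL$, $L\subsetneq AL$, $AL\subsetneq L$) the ordering is instead resolved by comparing $H_1$ with $Z(K)$ and by $\lvert\det\rvert$ considerations. Secondly, in the polycyclic case your plan — both for the ``if'' direction of \ref{it:main-polyc} and for the quasi-unipotent ``only if'' analysis — is to ``write down an explicit isomorphism'' and to ``match a finite-cyclic invariant.'' The actual mechanism (Lemma~\ref{lem:2ways}) is not a computation but a structural lemma: given two semidirect decompositions $G=H\rtimes\langle t\rangle=\1H\rtimes\langle\1t\rangle$ with $[G:H\1H]<\infty$, one may normalise the generator $\1h$ of a complement of $E=H\cap\1H$ in $\1H$ so that $[t,h]=[\1t,\1h]$, via a careful commutator-identity argument. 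This normalisation is precisely what makes the two matrices have the same $\mathbf{u}$-block in \ref{it:main-polyc}, and it is what both Lemma~\ref{lem:iso=>iii} and Lemma~\ref{lem:iii=>iso} hinge on. Also note the paper's case split in the polycyclic ``only if'' is $[G:H\1H]$ finite versus infinite, not quasi-unipotent versus not; the former is sharper, since $A$ quasi-unipotent does not a priori force $H\ne\1H$, and the paper still needs an argument (via the $G/N$-module structure) in the finite-index case.
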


A Leary--Minasyan group $G(A,L)$ is an HNN-extension of $\Z^n$ with associated subgroups $L$ and $AL$, with the isomorphism $L \to AL$ given by the matrix $A$. We denote by $T(A,L)$ the Bass--Serre tree associated to this splitting. See Sections \ref{ssec:prelim-hnn} and \ref{ssec:prelim-trees} for details.

Geometrically -- that is, by considering the action of $G(A,L)$ on $T(A,L)$ -- the options in Theorem~\ref{thm:main} can be seen as follows. Option~\ref{it:main-gen} reflects the case when the vertex stabilisers and the edge stabilisers have a complete algebraic characterisation -- see Lemma~\ref{lem:large}; this happens when $G(A,L)$ is not metabelian. Option~\ref{it:main-metab} reflects the case when, even if vertex stabilisers cannot be characterised algebraically, the set of elliptic elements still can -- see Proposition~\ref{prop:elliptic}; this is the case whenever $G(A,L)$ is not polycyclic. Finally, option~\ref{it:main-polyc} reflects the remaining case, in which the set of elliptic elements may not be unique up to isomorphism; this can only happen when $G(A,L)$ is polycyclic (in fact, virtually $2$-step nilpotent -- see Remark~\ref{rmk:2step}).

In particular, Theorem~\ref{thm:main} implies that whenever two non-metabelian Leary--Minasyan groups are isomorphic, they are `obviously' isomorphic: the following result is immediate from condition~\ref{it:main-gen} in Theorem~\ref{thm:main}, see Section~\ref{sec:pfmain} for details.

\begin{cor}
Let $n,\1n \geq 0$, let $A \in GL_n(\Q)$, $\1A \in GL_{\1n}(\Q)$, and let $L \leq \Z^n \cap A^{-1}(\Z^n)$, $\1L \leq \Z^{\1n} \cap \1A^{-1}(\Z^{\1n})$ be finite index subgroups. Suppose that $G(A,L)$ is not metabelian and isomorphic to $G(\1A,\1L)$. Then there exist isomorphisms $\Phi\colon G(A,L) \to G(\1A,\1L)$ and $\xi\colon T(A,L) \to T(\1A,\1L)$ such that $\Phi(g) \cdot \xi(y) = \xi(g \cdot y)$ for all $g \in G(A,L)$ and $y \in T(A,L)$.
\end{cor}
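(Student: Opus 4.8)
The plan is to deduce the corollary from condition~\ref{it:main-gen} of Theorem~\ref{thm:main}, so first I would check that the other two conditions are incompatible with $G(A,L)$ being non-metabelian. If $L = \Z^n$, then $A(\Z^n) \le \Z^n$ (that is, $A$ is integral), and the base subgroup $H := \langle x_1,\dots,x_n\rangle \cong \Z^n$ of the HNN extension $G(A,L)$ satisfies $tHt^{-1} \le H$; hence the normal closure of $H$ is the increasing union $\bigcup_{k \ge 0} t^{-k}Ht^k$ of abelian subgroups, so it is abelian, and since $G(A,L)/\langle\langle H\rangle\rangle \cong \Z$ the group $G(A,L)$ is metabelian. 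Using the isomorphism $G(A,L) \cong G(A^{-1},AL)$ described below, the case $AL = \Z^n$ reduces to this. As $G(A,L)$ is not metabelian, neither $L$ nor $AL$ equals $\Z^n$, so conditions~\ref{it:main-metab} and~\ref{it:main-polyc} fail; since $G(A,L) \cong G(\1A,\1L)$, Theorem~\ref{thm:main} provides $n = \1n$ and $B \in GL_n(\Z)$ realising condition~\ref{it:main-gen}.

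Assume first that $\1A = BAB^{-1}$ and $\1L = BL$. Define $\Phi\colon G(A,L) \to G(\1A,\1L)$ on the generators in~\eqref{eq:pres} by $t \mapsto t$ and $\mathbf{x}^{\mathbf{v}} \mapsto \mathbf{x}^{B\mathbf{v}}$ for $\mathbf{v} \in \Z^n$. It is straightforward to check that this respects the defining relations: the commutator relations are clear, and for $\mathbf{v} \in L$ one has $B\mathbf{v} \in \1L$, so in $G(\1A,\1L)$ we get $t\mathbf{x}^{B\mathbf{v}}t^{-1} = \mathbf{x}^{\1AB\mathbf{v}} = \mathbf{x}^{BA\mathbf{v}}$, which is $\Phi(\mathbf{x}^{A\mathbf{v}})$. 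The analogous map built from $B^{-1}$ inverts $\Phi$, so $\Phi$ is an isomorphism. By construction $\Phi$ maps the base group $\langle x_1,\dots,x_n\rangle$ of $G(A,L)$ isomorphically onto the base group of $G(\1A,\1L)$, takes the two edge subgroups $L$ and $AL$ of $G(A,L)$ onto $\1L = BL$ and $\1A\1L = BAL$, and fixes the stable letter; thus it is an isomorphism of the HNN structures defining $T(A,L)$ and $T(\1A,\1L)$. It therefore induces an isomorphism of trees $\xi\colon T(A,L) \to T(\1A,\1L)$, given on vertices by $gH \mapsto \Phi(g)H$ and on edges by $gL \mapsto \Phi(g)\1L$, and $\xi(g \cdot y) = \Phi(g)\cdot\xi(y)$ holds by construction.

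For the second alternative, $\1A = BA^{-1}B^{-1}$ and $\1L = BAL = B(AL)$, I would first use the isomorphism $\psi\colon G(A,L) \to G(A^{-1},AL)$ that is the identity on $x_1,\dots,x_n$ and inverts the stable letter; this is well defined because the relation $t\mathbf{x}^{\mathbf{v}}t^{-1} = \mathbf{x}^{A\mathbf{v}}$ for $\mathbf{v} \in L$ is equivalent to $t^{-1}\mathbf{x}^{\mathbf{w}}t = \mathbf{x}^{A^{-1}\mathbf{w}}$ for $\mathbf{w} \in AL$, and it identifies $T(A,L)$ with $T(A^{-1},AL)$ equivariantly, since the two presentations encode the same HNN splitting of the same group with the roles of the two edge inclusions (equivalently, the sign of the stable letter) interchanged. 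Since $\1A = B(A^{-1})B^{-1}$ and $\1L = B(AL)$, the previous paragraph applies to $G(A^{-1},AL)$ and $G(\1A,\1L)$; composing its output with $\psi$ and with the tree identification just described yields the required $\Phi$ and $\xi$. I do not expect a genuine obstacle: the argument is bookkeeping with presentations and with the conventions of Sections~\ref{ssec:prelim-hnn} and~\ref{ssec:prelim-trees}. The only delicate point is this last tree identification, where one must track which subgroup of $\Z^n$ serves as the ``forward'' edge inclusion after inverting the stable letter and verify equivariance for the relabelled action.
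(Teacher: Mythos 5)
Your argument is correct and is essentially the paper's route: Lemma~\ref{lem:polycyclic} gives $L \neq \Z^n \neq AL$ (and the same for $\1L,\1A\1L$), so only condition~\ref{it:main-gen} of Theorem~\ref{thm:main} can hold, and the explicit isomorphism $\Phi$ given by $\mathbf{x}^{\mathbf{v}} \mapsto \mathbf{x}^{B\mathbf{v}}$, $t \mapsto t^{\pm 1}$ is exactly the one the paper constructs in the ``if'' direction of Theorem~\ref{thm:main}; since it carries base group to base group, edge subgroups to edge subgroups and the stable letter to (the inverse of) the stable letter, it induces the required equivariant tree isomorphism. Your factorisation through $G(A^{-1},AL)$ in the $\varepsilon = -1$ case is just an explicit bookkeeping of the same map, and your attention to which coset ($gL$ versus $g(AL)$) labels a given oriented edge after inverting the stable letter is precisely the point the paper leaves to the reader.
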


The following can also be deduced from Theorem~\ref{thm:main}: this is immediate in the cases \ref{it:main-gen} and \ref{it:main-metab}, and is shown in Lemma~\ref{lem:conj/Q} in the case~\ref{it:main-polyc}.

\begin{cor}
Let $n,\1n \geq 0$, let $A \in GL_n(\Q)$, $\1A \in GL_{\1n}(\Q)$, and let $L \leq \Z^n \cap A^{-1}(\Z^n)$, $\1L \leq \Z^{\1n} \cap (\1A)^{-1}(\Z^{\1n})$ be finite index subgroups. Suppose that $G(A,L) \cong G(\1A,\1L)$. Then $n = \1n$, and $\1A$ is conjugate to either $A$ or $A^{-1}$ in $GL_n(\Q)$.
\end{cor}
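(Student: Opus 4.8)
The plan is to read the statement off Theorem~\ref{thm:main}. As $G(A,L)\cong G(\1A,\1L)$, that theorem yields $n=\1n$ and at least one of conditions \ref{it:main-gen}, \ref{it:main-metab}, \ref{it:main-polyc}, and in each case I will exhibit a matrix of $GL_n(\Q)$ conjugating $\1A$ to $A$ or to $A^{-1}$. In case~\ref{it:main-gen} the matrix $B$ lies in $GL_n(\Z)\subseteq GL_n(\Q)$ and satisfies $\1A=BAB^{-1}$ or $\1A=BA^{-1}B^{-1}$; in case~\ref{it:main-metab} we are handed $B\in GL_n(\Q)$ with $\1A=BA^{\varepsilon}B^{-1}$ for some $\varepsilon\in\{\pm1\}$. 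Thus both of these cases are immediate, and all the content lies in case~\ref{it:main-polyc}, which is exactly what Lemma~\ref{lem:conj/Q} will establish.

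In case~\ref{it:main-polyc} we are given $q\in\Z$, a matrix $C\in GL_{n-1}(\Z)$ of finite order $m$ with $\gcd(q,m)=1$, and a vector $\mathbf{u}\in\Z^{n-1}$, such that $A$ and $\1A$ are conjugate, in $GL_n(\Z)$ and hence in $GL_n(\Q)$, to the matrices
\[
M:=\begin{pmatrix}1&\mathbf{0}^T\\ \mathbf{u}&C\end{pmatrix}\qquad\text{and}\qquad M_q:=\begin{pmatrix}1&\mathbf{0}^T\\ \mathbf{u}&C^q\end{pmatrix},
\]
respectively. It therefore suffices to prove that $M$ and $M_q$ are conjugate in $GL_n(\Q)$; this gives that $\1A$ is $GL_n(\Q)$-conjugate to $M_q$, hence to $M$, hence to $A$, so that in this case the first of the two alternatives in the statement holds.

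To prove $M$ and $M_q$ are $\Q$-conjugate, I would argue as follows. Since $\gcd(q,m)=1$ and $C^m=I_{n-1}$, the operators $\tfrac{1}{m}\sum_{i=0}^{m-1}C^i$ and $\tfrac{1}{m}\sum_{i=0}^{m-1}(C^q)^i$ coincide; consequently $\ker(C-I_{n-1})=\ker(C^q-I_{n-1})=:V_1$ and $\operatorname{Im}(C-I_{n-1})=\operatorname{Im}(C^q-I_{n-1})=:V'$, the splitting $\Q^{n-1}=V_1\oplus V'$ is preserved by $C$ and by $C^q$, and both act as the identity on $V_1$. As $C$ and $C^q$ have finite order, $C|_{V'}$ and $(C^q)|_{V'}$ are semisimple with the same characteristic polynomial (the eigenvalues of $C$ are roots of unity of order dividing $m$, and $\zeta\mapsto\zeta^q$ permutes the eigenvalues of each given order, so $C$ and $C^q$ have equal characteristic polynomials), hence conjugate over $\Q$ by some $Q\in GL(V')$; then $P:=\id_{V_1}\oplus Q\in GL_{n-1}(\Q)$ satisfies $PCP^{-1}=C^q$ and $P\mathbf{u}-\mathbf{u}\in V'=\operatorname{Im}(I_{n-1}-C^q)$. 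Now conjugating $M$ by $\operatorname{diag}(1,P)$ gives $\left(\begin{smallmatrix}1&\mathbf{0}^T\\ P\mathbf{u}&C^q\end{smallmatrix}\right)$, and conjugating this in turn by $\left(\begin{smallmatrix}1&\mathbf{0}^T\\ \mathbf{z}&I_{n-1}\end{smallmatrix}\right)$ replaces the lower block of its first column by $P\mathbf{u}+(I_{n-1}-C^q)\mathbf{z}$; choosing $\mathbf{z}$ with $(I_{n-1}-C^q)\mathbf{z}=\mathbf{u}-P\mathbf{u}$, which is possible since $\mathbf{u}-P\mathbf{u}\in\operatorname{Im}(I_{n-1}-C^q)$, produces exactly $M_q$.

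I expect the real difficulty to lie entirely in case~\ref{it:main-polyc}, and within it in coordinating the two conjugations: turning the block $C$ into $C^q$ is routine but will in general displace the vector $\mathbf{u}$, and this displacement must be absorbed using both the freedom to take $P$ trivial on the fixed subspace $V_1$ and the unitriangular conjugations, which can only shift the first column by vectors of $\operatorname{Im}(I_{n-1}-C^q)$. The identity $\tfrac{1}{m}\sum_i C^i=\tfrac{1}{m}\sum_i C^{qi}$ — equivalently, that $C$ and $C^q$ share the splitting $\Q^{n-1}=V_1\oplus V'$ — is what makes these two corrections compatible, and is the only place, beyond the characteristic-polynomial bookkeeping, where $\gcd(q,m)=1$ enters.
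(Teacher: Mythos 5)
Your proposal is correct and follows the paper's route: reduce to case~\ref{it:main-polyc} of Theorem~\ref{thm:main} (cases~\ref{it:main-gen} and \ref{it:main-metab} are immediate) and then prove that $M$ and $M_q$ are $\Q$-conjugate, using the splitting of $\Q^{n-1}$ into the $1$-eigenspace of $C$ and a complement, invertibility of $I-C$ on that complement to absorb the off-diagonal vector, and Galois-theoretic stability of the characteristic polynomial to handle the block $C$. The packaging differs slightly but not in substance: you first pick a conjugator $P$ of $C$ to $C^q$ that restricts to the identity on $V_1$ and then correct the first column by a unipotent conjugation, whereas the paper's Lemma~\ref{lem:conj/Q} normalises the off-diagonal vector for both $A$ and $\1A$ by a change of the reference vector $\mathbf{w}$ and only then conjugates the blocks; likewise you invoke directly that two semisimple matrices over $\Q$ with equal characteristic polynomials are conjugate, while the paper's Lemma~\ref{lem:fo-conj} decomposes into irreducible summands first and cites the analogous fact for irreducible matrices.
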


\begin{rmk}
The cases~\ref{it:main-gen}--\ref{it:main-polyc} in Theorem~\ref{thm:main} are independent, i.~e.\ none of them covers either of the other two. Indeed, the fact that \ref{it:main-polyc} does not cover \ref{it:main-gen} or \ref{it:main-metab}, and that \ref{it:main-metab} does not cover \ref{it:main-gen}, follows by considering the values $L$ and $AL$ may take. To see that \ref{it:main-gen} does not cover \ref{it:main-metab} or \ref{it:main-polyc}, see Remark~\ref{rmk:metab} and Example~\ref{ex:polyc}, respectively. Finally, if $L = AL = \1L = \1A\1L = \Z^n$ and $\1A = BA^\varepsilon B^{-1}$ for some $\varepsilon \in \{ \pm 1 \}$, then the condition $\bigcup_{j \in \Z} A^j(\Z^n) = \bigcup_{j \in \Z} A^jB^{-1}(\Z^n)$ becomes equivalent to $\Z^n = B^{-1}(\Z^n)$, that is, $B \in GL_n(\Z)$ -- therefore, it follows from Example~\ref{ex:polyc} that \ref{it:main-metab} does not cover \ref{it:main-polyc} either.
\end{rmk}

The structure of the paper is as follows. In Section~\ref{sec:prelim}, we summarise some of the results (that we use here) on HNN-extensions (see Section~\ref{ssec:prelim-hnn}), semidirect products (see Section~\ref{ssec:semidir}), and group actions on trees (see Section~\ref{ssec:prelim-trees}). We also give a coarse classification of Leary--Minasyan groups (into non-metabelian, non-polycyclic metabelian and polycyclic groups; see Section~\ref{ssec:coarse-class}). In Sections \ref{sec:large} and \ref{sec:metab}, we give necessary conditions for an isomorphism $G(A,L) \cong G(\1A,\1L)$ to exist in the case when $G(A,L)$ is non-metabelian and non-polycyclic metabelian, respectively. In Section~\ref{sec:polyc}, we give necessary and sufficient conditions for an isomorphism $G(A,L) \cong G(\1A,\1L)$ to exist in the case when $G(A,L)$ is polycyclic, and discuss the implications of condition~\ref{it:main-polyc} in Theorem~\ref{thm:main}. In Section~\ref{sec:pfmain}, we give a proof of Theorem~\ref{thm:main} based on the results in Sections \ref{sec:large}, \ref{sec:metab} and \ref{sec:polyc}.

\begin{ack}
The author would like to thank Ian Leary and Ashot Minasyan for useful comments, and the referee for their careful reading of the paper and valuable feedback.
\end{ack}

\section{Prelimilaries and division into cases} \label{sec:prelim}

\subsection{HNN-extensions and centralisers} \label{ssec:prelim-hnn}

We first give a couple of algebraic results on HNN-extensions in general and Leary--Minasyan groups in particular.

Let $H$ be a group, let $L \leq H$ be a subgroup, and let $\theta\colon L \to H$ be an injective homomorphism. We define the \emph{HNN-extension} of $H$ with respect to $\theta$ by a (relative) presentation $G = \langle H,t \mid tht^{-1} = \theta(h) \text{ for all } h \in L \rangle$; the element $t \in G$ is called a \emph{stable letter}. A word $w = h_0 t^{\alpha_1} h_1 \cdots t^{\alpha_k} h_k$ over the alphabet $H \cup \{t\}$, where $\alpha_1,\ldots,\alpha_k \in \Z$ and $h_0,\ldots,h_k \in H$, is said to be \emph{reduced} if $\alpha_i \neq 0$ for each $i$ and if $w$ does not contain subwords of the form $tht^{-1}$ for $h \in L$ or $t^{-1}ht$ for $h \in \theta(L)$. It is clear from the presentation of $G$ (by induction on $k+\sum_{i=1}^k |\alpha_i|$, say) that every element of $g$ is represented by at least one reduced word.

\begin{prop}[Britton's Lemma, see {\cite[Theorem I.5.11]{serre}}] \label{prop:britton}
Let a group $G$ and a word $w = h_0 t^{\alpha_1} h_1 \cdots t^{\alpha_k} h_k$ be as above. If $w$ is reduced and represents the identity in $G$, then $k = 0$ and $h_0 = 1$.
\end{prop}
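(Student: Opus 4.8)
The statement to prove is Britton's Lemma, Proposition~\ref{prop:britton}. The plan is to construct, by hand, a normal form for elements of $G$ and show that distinct reduced words of the special shape in question cannot represent the identity unless they are trivial. The standard route is via a right action of $G$ on a suitable set. First I would fix once and for all a set $S$ of right coset representatives for $L$ in $H$ containing $1$, and a set $T$ of right coset representatives for $\theta(L)$ in $H$ containing $1$. Call a word $g_0 t^{\epsilon_1} g_1 \cdots t^{\epsilon_k} g_k$ (with $\epsilon_i \in \{\pm 1\}$, $g_i \in H$) \emph{in normal form} if $g_i \in S$ whenever $\epsilon_i = 1$, $g_i \in T$ whenever $\epsilon_i = -1$, and there is no subword $t^{\epsilon} 1 t^{-\epsilon}$; note this is the standard (length-minimising) reduced form, allowing $g_0$ arbitrary in $H$.

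Next I would let $G$ act on the set $\Omega$ of normal-form words: define the action of a generator $h \in H$ on the right by right multiplication $g_k \mapsto g_k h$ followed by renormalisation of the last factor (rewrite $g_k h = \ell s$ or $\ell' t'$ with $\ell,\ell' \in L, \theta(L)$ respectively, absorbing $\ell$ into the preceding $t^{\epsilon_k}$ via the relation $t \ell = \theta(\ell) t$ or $t^{-1}\theta(\ell) = \ell t^{-1}$), and define the action of $t$ by appending: if the word ends in $t^{-1} 1$ one deletes that and multiplies the resulting tail appropriately (cancellation), otherwise one writes $g_k = \ell s$ with $\ell \in L$, $s \in S$, and passes to $\cdots t^{\epsilon_k}\theta(\ell) t s$; the action of $t^{-1}$ is defined symmetrically using $T$. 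The core verification is that these assignments respect the defining relations of $G$ — commutation of the $H$-part with itself is immediate, and $t h t^{-1} = \theta(h)$ for $h \in L$ must be checked by a short case analysis on the shape of the word acted upon. This gives a genuine action of $G$ on $\Omega$.

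Finally, I would evaluate: start from the empty word (i.e.\ $1 \in H \subseteq \Omega$) and apply $w = h_0 t^{\alpha_1} h_1 \cdots t^{\alpha_k} h_k$. The key point is that if $w$ is reduced in the sense defined before the proposition, then no cancellation ever occurs during this evaluation, so $w \cdot 1$ is a normal-form word whose underlying sequence of exponents has total length $\sum |\alpha_i|$. If $w = 1$ in $G$ then $w \cdot 1 = 1 \cdot 1 = 1$, forcing $\sum|\alpha_i| = 0$, hence $k = 0$ and then $h_0 = 1$. I expect the main obstacle to be the bookkeeping in verifying the relation $tht^{-1} = \theta(h)$ under the action — one has to be careful that the renormalisation choices (which $\ell$, which $s$ or $t'$) are well-defined and that appending $t$ then $t^{-1}$ genuinely returns the original word, which is where the hypothesis $1 \in S \cap T$ is used. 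Since this is a classical result with a reference to \cite[Theorem I.5.11]{serre}, an alternative — and shorter — plan is simply to invoke that reference and omit the construction; but the self-contained argument above is the one I would write if a proof is wanted.
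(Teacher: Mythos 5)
The paper does not prove Britton's Lemma; it cites \cite[Theorem I.5.11]{serre}, where the result is obtained geometrically from Bass--Serre theory (the associated graph of the HNN-extension is a tree). Your proposal is instead the classical combinatorial route via the Artin--van der Waerden permutation representation, as in Lyndon and Schupp's treatment of HNN extensions. This is a legitimate, genuinely different route: more elementary and self-contained than the geometric one, at the cost of the normal-form bookkeeping you rightly flag as the crux.

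That bookkeeping does contain a mismatch worth fixing. You set up the normal form with $g_0 \in H$ arbitrary and $g_i$ for $i \geq 1$ constrained by the sign of $\epsilon_i$ (the $t$-letter to its \emph{left}), but then define a \emph{right} action. With that normal form the natural thing is a left action: $h \in H$ acts by $g_0 \mapsto hg_0$ with no renormalisation at all, and $t^{\pm 1}$ acts by splitting $g_0$ across a coset and either prepending a syllable or cancelling one. As written, your right $h$-action must renormalise $g_k h$ (since $g_k$ is constrained), and that renormalisation does not stop at the last factor: pushing the coset piece left through $t^{\epsilon_k}$ alters $g_{k-1}$, which must then be renormalised in turn, and so on through the word. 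Likewise the step in your $t$-action where you write $g_k = \ell s$ and pass to $\cdots\theta(\ell)\, t\, s$ is not a valid rewriting with these conventions, since nothing lets $\ell$ move past $s$. Both problems disappear if you either keep $g_0$ arbitrary and act on the left, or keep the right action but make $g_k$ the unconstrained factor with $g_i$ for $i < k$ normalised according to $\epsilon_{i+1}$. Once the side of the free factor matches the side of the action, the rest of your plan --- that evaluating a word reduced in the paper's sense from the basepoint never triggers a cancellation, hence preserves $\sum|\alpha_i|$, so $w = 1$ forces $k = 0$ and then $h_0 = 1$ --- goes through exactly as you describe.
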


In particular, Proposition~\ref{prop:britton} implies that $H$ can be seen as a subgroup of $G$.

Now let $A \in GL_n(\Q)$ and let $L \leq \Z^n \cap A^{-1}(\Z^n)$ be a finite index subgroup. Consider the HNN-extension $G(A,L)$ of $H = \Z^n$ with respect to $\theta\colon L \to \Z^n, \mathbf{v} \mapsto A\mathbf{v}$, as defined in \eqref{eq:pres}. We say a word $w = \prod_{i=1}^k t^{\alpha_i} \mathbf{x}^{\mathbf{v}_i} t^{-\alpha_i}$ over $\{ \mathbf{x}^{\mathbf{v}} \mid \mathbf{v} \in \Z^n \} \cup \{t\}$ is \emph{semi-reduced} if the word $t^{\alpha_1} \mathbf{x}^{\mathbf{v}_1} t^{\alpha_2-\alpha_1} \cdots t^{\alpha_k-\alpha_{k-1}} \mathbf{x}^{\mathbf{v}_k} t^{-\alpha_k}$ is reduced.

\begin{cor} \label{cor:centralisers}
Let $\prod_{i=1}^k t^{\alpha_i} \mathbf{x}^{\mathbf{v}_i} t^{-\alpha_i}$ be a semi-reduced word representing $g \in G(A,L)$, and let $\mathbf{v} \in \Z^n$. Then $g$ commutes with $\mathbf{x}^{\mathbf{v}}$ if and only if $\mathbf{v} \in \bigcap_{j=\gamma_-+1}^{\gamma_+} A^jL$, where $\gamma_- = \min\{0,\alpha_1,\ldots,\alpha_k\}$ and $\gamma_+ = \max\{0,\alpha_1,\ldots,\alpha_k\}$.
\end{cor}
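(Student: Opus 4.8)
The plan is to conjugate the problem into a form where Britton's Lemma (Proposition~\ref{prop:britton}) applies directly. Set $g = \prod_{i=1}^k t^{\alpha_i} \mathbf{x}^{\mathbf{v}_i} t^{-\alpha_i}$, and observe that $g$ commutes with $\mathbf{x}^{\mathbf{v}}$ if and only if $g \mathbf{x}^{\mathbf{v}} g^{-1} \mathbf{x}^{-\mathbf{v}} = 1$ in $G(A,L)$. First I would reduce to the case where all exponents are nonnegative: since conjugating $g$ by a power $t^{-\gamma_-}$ of the stable letter shifts every $\alpha_i$ to $\alpha_i - \gamma_-$ and shifts the relevant index range $\{\gamma_- + 1, \ldots, \gamma_+\}$ to $\{1, \ldots, \gamma_+ - \gamma_-\}$, while simultaneously replacing $\mathbf{v}$ by $A^{-\gamma_-}\mathbf{v}$ and the subgroup $\bigcap_{j=\gamma_-+1}^{\gamma_+} A^j L$ by $A^{-\gamma_-}$ of itself, it suffices to prove the statement when $\gamma_- = 0$, i.e.\ when all $\alpha_i \geq 0$ and $\gamma_+ = \max_i \alpha_i =: m$.

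Next I would compute $g\mathbf{x}^{\mathbf{v}}g^{-1}$ and determine when it equals $\mathbf{x}^{\mathbf{v}}$ using the HNN-structure. The key point is the following: an element $\mathbf{x}^{\mathbf{w}} \in \Z^n = H$ lies in the image of conjugation-by-$t^j$ of $H$ — more precisely, $t^{-j}\mathbf{x}^{\mathbf{w}}t^{j} \in H$ — precisely when $\mathbf{w} \in A^j(\Z^n) \cap \Z^n$ for $j > 0$ (and dually for $j < 0$), and more relevantly, $t^j \mathbf{x}^{\mathbf{w}} t^{-j}$ commutes past the block structure cleanly when $\mathbf{w}$ lies in the appropriate iterated image of $L$. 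Concretely, I would argue that $\mathbf{x}^{\mathbf{v}}$ commutes with each factor $t^{\alpha_i}\mathbf{x}^{\mathbf{v}_i}t^{-\alpha_i}$ simultaneously — but that is not quite the statement, so instead the cleanest route is: $g$ commutes with $\mathbf{x}^{\mathbf{v}}$ iff $\mathbf{x}^{-\mathbf{v}} g \mathbf{x}^{\mathbf{v}}$ and $g$ represent the same element, and by writing both as words and using the semi-reduced hypothesis, one sees via Britton's Lemma that this forces, at each ``level'' $j$ from $1$ up to $m$, the translate $\mathbf{x}^{A^{-j}\mathbf{v}}$ (appearing when $\mathbf{v}$ is pushed through $t^{-j}$) to be absorbable, i.e.\ $\mathbf{v} \in A^j L$. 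Running this over all $j \in \{1, \ldots, m\}$ yields $\mathbf{v} \in \bigcap_{j=1}^m A^j L$, which (after undoing the conjugation) is the claimed condition.

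For the converse, if $\mathbf{v} \in \bigcap_{j=1}^m A^j L$ then for each $i$ we have $\mathbf{v} \in A^{\alpha_i} L$ (since $\alpha_i \leq m$ and the intersection is decreasing in the obvious sense once one checks $A^{j+1}L \subseteq A^j(\Z^n)$ and the relations apply), so $t^{-\alpha_i}\mathbf{x}^{\mathbf{v}}t^{\alpha_i} = \mathbf{x}^{A^{-\alpha_i}\mathbf{v}}$ is a well-defined element of $H = \Z^n$, which commutes with $\mathbf{x}^{\mathbf{v}_i}$ as both lie in the abelian group $\Z^n$; conjugating back shows $t^{\alpha_i}\mathbf{x}^{\mathbf{v}_i}t^{-\alpha_i}$ commutes with $\mathbf{x}^{\mathbf{v}}$, and hence so does the product $g$.

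The main obstacle I anticipate is the forward direction: extracting from the equation $g\mathbf{x}^{\mathbf{v}}g^{-1}\mathbf{x}^{-\mathbf{v}} = 1$ the membership $\mathbf{v} \in A^j L$ for every intermediate level $j$, not merely for $j = m$. The semi-reduced hypothesis is exactly what prevents unwanted cancellations from masking an obstruction at an intermediate level, so the argument must carefully track how the insertion of $\mathbf{x}^{\pm\mathbf{v}}$ interacts with the pinch patterns $t \mathbf{x}^{\mathbf{w}} t^{-1}$ (with $\mathbf{w} \in L$) in the word for $g\mathbf{x}^{\mathbf{v}}g^{-1}$; I would make this precise by induction on $m$, peeling off the outermost $t$ and $t^{-1}$ and invoking Britton's Lemma to force $\mathbf{v} \in A L$ at the top level before passing to the conjugated subword.
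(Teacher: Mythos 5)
Your easy direction (membership in the intersection implies commuting with each factor, hence with $g$) is essentially identical to the paper's. The paper also shares your overall strategy for the hard direction: show that the commutator $g\mathbf{x}^{\mathbf{v}}g^{-1}\mathbf{x}^{-\mathbf{v}}$ is represented by a reduced (or nearly reduced) word when $\mathbf{v}$ fails the membership condition, then invoke Britton's Lemma. But there are two genuine gaps in how you propose to carry that out.

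First, the reduction to $\gamma_- = 0$ by conjugating by $t^{-\gamma_-}$ does not work as stated. Conjugation does turn $g$ into a word with all new exponents nonnegative, but it simultaneously replaces $\mathbf{x}^{\mathbf{v}}$ by $t^{-\gamma_-}\mathbf{x}^{\mathbf{v}}t^{\gamma_-}$, which lies in the base group $\Z^n$ (and equals $\mathbf{x}^{A^{-\gamma_-}\mathbf{v}}$) only when $\mathbf{v}\in\bigcap_{j=\gamma_-+1}^{0}A^jL$. For $\mathbf{v}$ outside that intersection the conjugate is a genuine HNN word, so the reduced statement you would be invoking ($g'$ vs.\ $\mathbf{x}^{\mathbf{w}}$) does not apply, and precisely those $\mathbf{v}$ are among the ones you need to rule out. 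The paper avoids this by treating both signs of $\alpha_i$ symmetrically without any prior normalisation.

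Second, and more substantially, your forward direction is a sketch of a plan rather than an argument, and you acknowledge this yourself. The ``induction on $m$, peeling off the outermost $t$ and $t^{-1}$'' is ill-defined: not every factor has exponent $m$, and in general the outermost letters of the reduced word for $g$ need not be $t$ and $t^{-1}$ (the first or last syllable can lie in $\Z^n$ when $\alpha_1 = 0$ or $\alpha_k = 0$). Moreover, after a reduction step the resulting word is not of the same shape, so the inductive hypothesis would not apply to it directly. The paper's proof is not inductive; its key idea is a careful \emph{choice}: take the \emph{largest} index $i$ with $\mathbf{v}\notin\bigcap_{j\in[\alpha_i]}A^jL$ (so that $\mathbf{x}^{\mathbf{v}}$ already commutes with all later factors), then take $\beta$ maximal with $\mathbf{v}\in\bigcap_{j\in[\beta]}A^jL$ and insert the identity $\mathbf{x}^{\mathbf{v}} = t^\beta\mathbf{x}^{A^{-\beta}\mathbf{v}}t^{-\beta}$ at position $i$. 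With these choices the commutator word is visibly reduced (up to one harmless merge at the ends when $\alpha_1=0$), and Britton's Lemma finishes. This is the idea your sketch is missing: without the extremal choice of $i$ and $\beta$, you cannot control the cancellations well enough to land in a reduced word, and it is exactly what makes the obstruction at an \emph{intermediate} level visible rather than masked.
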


\begin{proof}
Let $\gamma \geq 0$. If $\mathbf{v} \in \bigcap_{j=1}^\gamma A^jL$, then we may show (by induction on $\gamma$, say) that $\mathbf{x}^{\mathbf{v}} = t^\gamma \mathbf{x}^{A^{-\gamma}\mathbf{v}} t^{-\gamma}$. Similarly, if $\mathbf{v} \in \bigcap_{j=-\gamma+1}^0 A^jL$, then $\mathbf{x}^{\mathbf{v}} = t^{-\gamma} \mathbf{x}^{A^\gamma\mathbf{v}} t^\gamma$. Therefore, since $\gamma_- \leq 0 \leq \gamma_+$ and since $\gamma_- \leq \alpha_i \leq \gamma_+$ for each $i$, it follows that if $\mathbf{v} \in \bigcap_{j=\gamma_-+1}^{\gamma_+} A^jL$ then $\mathbf{x}^{\mathbf{v}}$ commutes with $t^{\alpha_i} \mathbf{x}^{\mathbf{v}_i} t^{-\alpha_i}$ for each $i$; in particular, $\mathbf{x}^{\mathbf{v}}$ commutes with $g$, as required.

Conversely, suppose that $\mathbf{v} \notin \bigcap_{j=\gamma_-+1}^{\gamma_+} A^jL$. Therefore, there exists $i \in \{ 1,\ldots,k \}$ such that $\mathbf{v} \notin \bigcap_{j \in [\alpha_i]} A^jL$, where we define $[\alpha] := \{ 1,\ldots,\alpha \}$ for $\alpha \in \Z_{\geq 0}$ and $[\alpha] := \{ \alpha+1,\ldots,0 \}$ for $\alpha \in \Z_{<0}$. We choose such an $i \in \{ 1,\ldots,k \}$ to be as large as possible; it then follows from the previous paragraph that $\mathbf{x}^{\mathbf{v}}$ commutes with $t^{\alpha_j} \mathbf{x}^{\mathbf{v}_j} t^{-\alpha_j}$ for all $j > i$.

Suppose first that $\alpha_i \geq 0$, and let $\beta \geq 0$ be the largest integer such that $\mathbf{v} \in \bigcap_{j \in [\beta]} A^jL$, so that $\beta < \alpha_i$. It follows that $\mathbf{x}^{\mathbf{v}} = t^\beta \mathbf{x}^{A^{-\beta}\mathbf{v}} t^{-\beta}$, but $A^{-\beta}\mathbf{v} \notin AL$. Therefore, we have
\begin{equation} \label{eq:commutator}
g \mathbf{x}^{\mathbf{v}} g^{-1} (\mathbf{x}^{\mathbf{v}})^{-1} = t^{\alpha_1} \mathbf{x}^{\mathbf{v}_1} t^{\alpha_2-\alpha_1} \cdots \mathbf{x}^{\mathbf{v}_i} t^{\beta-\alpha_i} \mathbf{x}^{A^{-\beta}\mathbf{v}} t^{\alpha_i-\beta} \mathbf{x}^{-\mathbf{v}_i} \cdots t^{\alpha_1-\alpha_2} \mathbf{x}^{-\mathbf{v}_1} t^{-\alpha_1} \mathbf{x}^{-\mathbf{v}}.
\end{equation}
But the word on the right hand side is reduced unless $\alpha_1 = 0$, and in the latter case this word becomes reduced after replacing the terminal subword $\mathbf{x}^{-\mathbf{v}_1} \mathbf{x}^{-\mathbf{v}}$ with $\mathbf{x}^{-\mathbf{v}_1-\mathbf{v}}$. It follows by Proposition~\ref{prop:britton} that $g \mathbf{x}^{\mathbf{v}} g^{-1} (\mathbf{x}^{\mathbf{v}})^{-1} \neq 1$ in $G(A,L)$, and so $g$ does not commute with $\mathbf{x}^{\mathbf{v}}$, as required.

On the other hand, if $\alpha_i < 0$, then we set $\beta \leq 0$ be the smallest integer such that $\mathbf{v} \in \bigcap_{j \in [\beta]} A^jL$, so that $\beta > \alpha_i$. We may then show that the equation \eqref{eq:commutator} still holds and that the right hand side is reduced unless $\alpha_1 = 0$, in which case it becomes reduced after replacing $\mathbf{x}^{-\mathbf{v}_1} \mathbf{x}^{-\mathbf{v}}$ with $\mathbf{x}^{-\mathbf{v}_1-\mathbf{v}}$. It then follows, again by Proposition~\ref{prop:britton}, that $g \mathbf{x}^{\mathbf{v}} g^{-1} (\mathbf{x}^{\mathbf{v}})^{-1} \neq 1$ in $G(A,L)$, and so $g$ does not commute with $\mathbf{x}^{\mathbf{v}}$, as required.
\end{proof}

\subsection{Ascending HNN-extensions and semidirect products} \label{ssec:semidir}

Recall that a group $G$ is said to be an (\emph{internal}) \emph{semidirect product} of a normal subgroup $K \unlhd G$ and a subgroup $M \leq G$, written $G = K \rtimes M$, if we have $KM = G$ and $K \cap M = \{1\}$ (so that $G/K \cong M$). Note that in this case $M$ acts on $K$ by conjugation, and one may check that the groups $K$ and $M$ together with the $M$-action on $K$ determine the group $G$ uniquely up to isomorphism. A semidirect product $G = K \rtimes M$ is an (\emph{internal}) \emph{direct product} of $K$ and $M$, written $G = K \times M$, if $M$ is normal in $G$.

Now let $H$ be a group, and let $\theta\colon H \to H$ be an injective group homomorphism. In this case, the HNN-extension $G$ (of $H$, with respect to $\theta$ and with stable letter $t$) is said to be \emph{ascending}. We then have an chain of subgroups
\[
{}\cdots \leq tHt^{-1} \leq H \leq t^{-1}Ht \leq t^{-2}Ht^2 \leq \cdots{}
\]
of $G$, implying that $\bigcup_{j \in \Z} t^jHt^{-j}$ is a subgroup of $G$ (which is invariant under conjugation by $t$ and so normal). One may then verify the following well-known description of ascending HNN-extensions.

\begin{lem} \label{lem:ascHNN}
Let $G$ be the ascending HNN-extension of a group $H$ with respect to an injective group homomorphism $\theta\colon H \to H$, with stable letter $t$. Then $G = K \rtimes \langle t \rangle$, where $K = \bigcup_{j \in \Z} t^jHt^{-j}$. \qed
\end{lem}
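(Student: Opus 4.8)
The plan is to verify the two defining conditions of an internal semidirect product: that $K$ is normal, that $K\langle t\rangle = G$, and that $K \cap \langle t\rangle = \{1\}$. The normality of $K = \bigcup_{j\in\Z} t^jHt^{-j}$ was essentially already observed before the statement, since the chain $\cdots \leq tHt^{-1} \leq H \leq t^{-1}Ht \leq \cdots$ (which uses that $\theta(H) \leq H$, i.e.\ $tHt^{-1} \leq H$) exhibits $K$ as an increasing union of subgroups, hence a subgroup; conjugation by $t$ shifts the union by one index and so fixes $K$ setwise, and since $H \leq K$ and $t$ normalises $K$, every element of $G$ normalises $K$. I would spell this out in one or two sentences.

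Next I would show $K\langle t\rangle = G$. Since $G$ is generated by $H \cup \{t\}$ and $H \leq K$, it suffices to note that $K\langle t\rangle$ is a subgroup containing both $H$ and $t$: it is closed under multiplication because $K$ is normalised by $t$ (so $t^a k t^b = (t^a k t^{-a}) t^{a+b} \in K\langle t\rangle$), and it contains inverses for the same reason. Hence $K\langle t\rangle = G$.

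The one genuinely substantive point — and the step I expect to be the main obstacle — is the triviality of $K \cap \langle t\rangle$. Here one must rule out that some nonzero power $t^m$ lies in $\bigcup_j t^jHt^{-j}$. This is exactly where Britton's Lemma (Proposition~\ref{prop:britton}) is needed: if $t^m = t^j h t^{-j}$ with $h \in H$ and, say, $m > 0$, then $t^{m-j} h^{-1} t^{j} = 1$ and one checks this word is reduced once written in the standard alternating form (after conjugating one may assume $j = 0$, giving $t^m h^{-1} = 1$ directly, which is reduced with $k = m > 0$), contradicting Proposition~\ref{prop:britton}; the case $m < 0$ is symmetric, and $m = 0$ is trivial. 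Since we have also recorded that $H$, and hence $K$, injects into $G$, the homomorphism $G \to \langle t \rangle$ sending $H \mapsto 1$ and $t \mapsto t$ (well-defined from the presentation, as $\theta(h) \mapsto 1$ as well) restricts to the identity on $\langle t\rangle$, which gives an alternative and perhaps cleaner way to see $K \cap \langle t\rangle = \{1\}$: its kernel is exactly the normal closure of $H$, namely $K$, and a generator-counting argument shows $K$ is precisely this kernel. I would use whichever of these two arguments is shorter to write, most likely the retraction argument, and conclude $G = K \rtimes \langle t\rangle$.
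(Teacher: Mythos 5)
Your proof is correct, and since the paper itself omits the argument for this lemma (the text before the statement merely sketches why $K$ is a normal subgroup and then asserts the rest is a routine verification, hence the bare \qed), you are filling in exactly the intended standard verification: normality of $K$ from the fact that conjugation by $t$ shifts the ascending chain and that $H\le K$; the product $K\langle t\rangle=G$ because it is a subgroup containing the generators; and triviality of $K\cap\langle t\rangle$, which is indeed the only point requiring an appeal to Britton's Lemma or, equivalently, to the retraction $\psi\colon G\to\Z$ with $t\mapsto 1$, $H\mapsto 0$. Both of your alternatives for this last step are valid; the retraction argument is the cleaner one and is essentially the map $\psi$ that the paper uses repeatedly later (e.g.\ in Lemma~\ref{lem:large}\ref{it:large-psi}), so it fits the paper's toolkit naturally. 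One small notational slip: in the paper's formulation of Britton's Lemma, the index $k$ counts the number of $t$-blocks, not the exponent, so the reduced word $t^{m}h^{-1}$ has $k=1$ with $\alpha_1=m\neq 0$ (not $k=m$); the conclusion you draw is unaffected, since $k\neq 0$ is all that is needed.
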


\subsection{Actions on trees} \label{ssec:prelim-trees}

HNN-extensions fit into a more general geometric setting of groups acting on (simplicial) trees without global fixed points. We briefly describe some of the main properties of group actions on trees, and refer the interested reader to \cite{serre} or \cite{culler} for a more thorough description.

For a simplicial tree $T$, we write $V(T)$ and $E(T)$ for the sets of vertices and edges of $T$, respectively. Given a group $G$ acting on a simplicial tree $T$ by automorphisms, we may define the \emph{translation length} function $\tau_T\colon G \to \Z$ by $\tau_T(g) := \min \{ d_T(x,gx) \mid x \in T \}$. An element $g \in G$ is said to be \emph{elliptic} (with respect to the action of $G$ on $T$) if $\tau_T(g) = 0$, and \emph{hyperbolic} otherwise. The following result is well-known.

\begin{prop}[see {\cite[1.3(ii)]{culler}}] \label{prop:axis}
If an element $g \in G$ is hyperbolic (with respect to an action on a tree $T$), then the set $\ell = \{ x \in T \mid d_T(x,gx) = \tau_T(g) \}$ is a $\langle g \rangle$-invariant subtree of $T$ isometric to $\R$, and $g$ acts on $\ell$ as a translation by $\tau_T(g)$.
\end{prop}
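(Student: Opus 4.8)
The plan is to construct the line $\ell$ explicitly from a single minimising point and then identify it with the given set. Throughout write $\tau = \tau_T(g)$; since $g$ is hyperbolic we have $\tau > 0$. For points $p,q$ of (the geometric realisation of) $T$ let $[p,q]$ denote the unique geodesic joining them, and recall two standard facts about trees: any three points $p,q,r$ have a unique \emph{median} $m(p,q,r)$ lying on each of $[p,q],[q,r],[p,r]$; and a concatenation of geodesic segments in which consecutive segments meet only in a common endpoint is itself geodesic (equivalently, closest-point projection onto a subtree is well defined and $1$-Lipschitz).

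\emph{Step 1: powers of a minimising point.} Fix a point $x$ with $d_T(x,gx) = \tau$ (such a point exists by definition of $\tau_T$). I would first show $d_T(x,g^2x) = 2\tau$. Put $c = m(x,gx,g^2x)$, $a = d_T(x,c)$ and $b = d_T(c,gx)$, so that $a+b = \tau$ as $c \in [x,gx]$. Applying the isometry $g$ to the inclusion $c \in [x,gx]$ shows that $gc$ lies on $[gx,g^2x]$ at distance $a$ from $gx$; but $c$ itself lies on $[gx,g^2x]$, at distance $b$ from $gx$. Hence $d_T(c,gc) = |a-b|$, whereas $d_T(c,gc) \geq \tau = a+b$ by minimality of $\tau$, forcing $ab = 0$. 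If $a = 0$ then $c = x \in [gx,g^2x]$, so $\tau = d_T(gx,g^2x) = d_T(gx,x) + d_T(x,g^2x)$ gives $g^2x = x$; then $g$ reverses the segment $[x,gx]$ and fixes its midpoint, contradicting hyperbolicity. So $b = 0$, i.e.\ $c = gx$, whence $d_T(x,g^2x) = a + d_T(gx,g^2x) = 2\tau$. Since each $g^jx$ again realises the minimum (as $d_T(g^jx,g^{j+1}x) = \tau$), the same statement applied to the pairs $(g^jx,g^{j+2}x)$ shows that consecutive segments $[g^jx,g^{j+1}x]$ and $[g^{j+1}x,g^{j+2}x]$ meet only in $g^{j+1}x$; chaining them (and using $\tau_T(g^{-1}) = \tau$ for negative indices) yields $d_T(x,g^kx) = |k|\tau$ for all $k \in \Z$.

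\emph{Step 2: the axis.} Set $\ell_0 := \bigcup_{k \in \Z}[g^kx,g^{k+1}x]$. By Step 1 this is a bi-infinite non-backtracking concatenation of geodesic segments, hence an embedded line in $T$, isometric to $\R$ after passing to the geometric realisation; moreover $g(\ell_0) = \ell_0$ and $g$ translates $\ell_0$ by $d_T(x,gx) = \tau$. It remains to prove $\ell = \ell_0$. The inclusion $\ell_0 \subseteq \ell$ is immediate, since every $y \in \ell_0$ satisfies $d_T(y,gy) = \tau$. For the converse, take $y \in \ell$ and let $p$ be the closest-point projection of $y$ onto $\ell_0$; then the projection of $gy$ is $gp$, we have $[y,p]\cap\ell_0 = \{p\}$ and $[gp,gy]\cap\ell_0 = \{gp\}$, and $p \neq gp$ because $d_T(p,gp) = \tau > 0$. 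Hence $[y,p]\cup[p,gp]\cup[gp,gy]$ is geodesic, so $\tau = d_T(y,gy) = d_T(y,p) + \tau + d_T(y,p)$, forcing $d_T(y,p) = 0$ and $y \in \ell_0$. Thus $\ell = \ell_0$ is a $\langle g\rangle$-invariant line on which $g$ acts as translation by $\tau$.

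The tree-geometry manipulations (medians, non-backtracking concatenations, projections) are entirely routine. The only point demanding genuine care is the dichotomy $ab = 0$ in Step 1 together with the elimination of the degenerate case $a = 0$: this is precisely where \emph{hyperbolicity} is needed rather than merely non-triviality of $\tau$, via the observation that an automorphism reversing a geodesic segment fixes its midpoint. Once that is settled, the rest of the argument is formal.
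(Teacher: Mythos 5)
The paper does not give a proof of this proposition; it is cited as a standard fact from Culler's lecture notes (following Culler--Morgan / Serre), so there is no ``paper proof'' to compare yours against. Your argument is the standard textbook proof, and it is correct: the median trick forces $ab=0$, the degenerate case $a=0$ is eliminated, the translates of $x$ chain into a bi-infinite geodesic $\ell_0$, and the projection argument in Step~2 shows $\ell=\ell_0$.

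One small point worth making explicit. In eliminating $a=0$ you invoke that $g$ ``fixes the midpoint of $[x,gx]$, contradicting hyperbolicity.'' With the paper's definition $\tau_T(g)=\min_{v\in V(T)}d_T(v,gv)$, that midpoint need not be a vertex; when $\tau$ is odd it is the midpoint of an edge, and then $g$ is an edge inversion rather than an elliptic element in the vertex sense, and in that case the proposition genuinely fails (the set $\ell$ is a single edge, not a line). So the argument implicitly uses the standing convention that the action is without edge inversions -- equivalently, that ``elliptic'' is read as having a fixed point in the geometric realisation. Since the paper only applies this to Bass--Serre trees, where inversion-freeness is automatic and is noted explicitly, this is harmless, but it is the one place where you lean on a hypothesis not stated in the proposition itself. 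The rest of the write-up is clean and complete.
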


The subset $\ell$ as in Proposition~\ref{prop:axis} is called the \emph{axis} of $g$.

When $G$ is the HNN-extension of a group $H$ with respect to $\theta\colon L \to H$ (for some $L \leq H$) with stable letter $t \in G$, there is a canonical tree $T$ with a $G$-action, called the \emph{Bass--Serre tree} of $G$: the vertices of $T$ are the left cosets $gH$ of $H$ in $G$, the edges of $T$ are the left cosets $gL$ of $L$ in $G$, and a given edge $gL$ of $T$ is incident to the vertices $gH$ and $gt^{-1}H$ of $T$. Then $G$ acts on $T$ by left multiplication, and the fact that $T$ is a tree can be seen as a geometric interpretation of Britton's Lemma (Proposition~\ref{prop:britton}). It is clear that $G$ acts on $T$ with one orbit of vertices, one orbit of edges, and without edge inversions, and that the stabilisers of vertices and edges of $T$ are precisely the $G$-conjugates of $H$ and of $L$, respectively.

In the case when $G = G(A,L)$ is the HNN-extension of $H = \Z^n$ with respect to the map $\theta\colon L \to H, \mathbf{v} \mapsto A\mathbf{v}$, we denote by $T = T(A,L)$ the corresponding Bass--Serre tree. One can see from this description that if $\mathcal{E} \subseteq E(T)$ is the set of edges starting at the vertex $H \in V(T)$, then $\mathcal{E} = \mathcal{E}_1 \sqcup \mathcal{E}_2$, where $\mathcal{E}_1$ is a collection of $[\Z^n:L]$ edges each with stabiliser $L$, and $\mathcal{E}_2$ is a collection of $[\Z^n:AL]$ edges each with stabiliser $AL$; moreover, given $e,e' \in \mathcal{E}$, an element $g \in H$ with $e' = ge$ exists if and only if either $e,e' \in \mathcal{E}_1$ or $e,e' \in \mathcal{E}_2$. These observations imply the following result.

\begin{lem} \label{lem:stabilisers}
For $G = G(A,L)$ and $T = T(A,L)$, the following hold.
\begin{enumerate}[label=\textup{(\roman*)}]
\item \label{it:stab-slidefree} We have $L \nsubseteq AL$ and $AL \nsubseteq L$ if and only if given any two edges $e,e' \in E(T)$ starting at a vertex $v \in V(T)$ such that $\Stab_G(e) \subseteq \Stab_G(e')$, there exists $g \in \Stab_G(x)$ such that $ge = e'$.
\item \label{it:stab-reduced} We have $L \neq \Z^n \neq AL$ if and only if $\Stab_G(e) \subsetneq \Stab_G(v)$ for every edge $e \in E(T)$ incident to a vertex $v \in V(T)$.
\qed
\end{enumerate}
\end{lem}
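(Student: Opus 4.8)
The plan is to derive both statements from the coset-level description of the edges of $T$ recorded just before the lemma, after reducing to the base vertex $v = H$. First I would observe that, since $G$ acts transitively on $V(T)$, it suffices to verify everything at $v = H$: for $v = gH$ one has $\Stab_G(gH) = g\Z^n g^{-1}$ and every edge starting at $gH$ is of the form $ge$ with $e$ an edge starting at $H$ and $\Stab_G(ge) = g\,\Stab_G(e)\,g^{-1}$, so inclusions of edge stabilisers, and the existence of an element of the vertex stabiliser sliding one edge onto another, are all preserved under conjugation by $g$. Thus I may assume $v = H$, $\Stab_G(v) = \Z^n$, and that every edge starting at $v$ lies in $\mathcal{E} = \mathcal{E}_1 \sqcup \mathcal{E}_2$, with stabiliser $L$ if it lies in $\mathcal{E}_1$ and $AL$ if it lies in $\mathcal{E}_2$; moreover, by the observation preceding the lemma, for $e,e' \in \mathcal{E}$ there is $g \in \Stab_G(H)$ with $ge = e'$ if and only if $e$ and $e'$ lie in the same part. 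I would also record that $\mathcal{E}_1$ and $\mathcal{E}_2$ are both non-empty, since $[\Z^n:L] \geq 1$ and $AL \leq \Z^n$ is a (finite-index) subgroup.

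For \ref{it:stab-slidefree}, in the forward direction I would assume $L \nsubseteq AL$ and $AL \nsubseteq L$ and take $e,e'$ starting at $v = H$ with $\Stab_G(e) \subseteq \Stab_G(e')$: each stabiliser is $L$ or $AL$, and the mixed possibilities are excluded by hypothesis, so the two stabilisers are equal, which (as $L \neq AL$) puts $e$ and $e'$ in the same $\mathcal{E}_i$ and supplies the required sliding element of $\Stab_G(v)$. For the converse I would assume, say, $L \subseteq AL$, pick $e \in \mathcal{E}_1$ and $e' \in \mathcal{E}_2$, note $\Stab_G(e) = L \subseteq AL = \Stab_G(e')$, and observe that no element of $\Z^n = \Stab_G(H)$ slides $e$ onto $e'$ because they lie in different parts; the case $AL \subseteq L$ is symmetric. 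For \ref{it:stab-reduced} I would simply note that, by the same description together with the non-emptiness of $\mathcal{E}_1$ and $\mathcal{E}_2$, the condition that $\Stab_G(e) \subsetneq \Stab_G(v)$ for every edge $e$ starting at $v = H$ is equivalent to $L \subsetneq \Z^n$ and $AL \subsetneq \Z^n$, which in turn (since $L, AL \leq \Z^n$) is equivalent to $L \neq \Z^n \neq AL$.

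I do not expect a genuine obstacle here: the lemma is essentially a bookkeeping consequence of the coset description of $\mathcal{E}$ set up before the statement. The only two points needing a little care are the equivariant reduction to $v = H$ (so that the statements quantified over all vertices and edges follow from the single base case), and the remark that the hypotheses $L \nsubseteq AL$ and $AL \nsubseteq L$ in particular force $L \neq AL$, which is what lets one identify ``$e$ has stabiliser $L$'' with ``$e \in \mathcal{E}_1$'' without ambiguity.
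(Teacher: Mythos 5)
Your proposal is correct and follows exactly the route the paper intends: the lemma is stated with a \qed because it is meant to be read off from the coset description of $\mathcal{E} = \mathcal{E}_1 \sqcup \mathcal{E}_2$ recorded just before it, and your argument — equivariant reduction to the base vertex $H$, non-emptiness of both $\mathcal{E}_i$, and the observation that $L \nsubseteq AL$ and $AL \nsubseteq L$ force $L \neq AL$ so that the stabiliser identifies which part an edge lies in — is exactly the bookkeeping being appealed to.
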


\subsection{Coarse classification of Leary--Minasyan groups} \label{ssec:coarse-class}

We start the isomorphism classification by giving an algebraic characterisation of polycyclic and metabelian Leary--Minasyan groups. This will allow us to split our argument into polycyclic, non-polycyclic metabelian, and non-metabelian cases. In the latter two cases, we also give an algebraic description of elliptic elements of $G(A,L)$: see Proposition~\ref{prop:elliptic}.

Recall that a group $G$ is said to be \emph{metabelian} if there exists a normal subgroup $K \unlhd G$ with $K$ and $G/K$ abelian. Recall also that a group $G$ is \emph{polycyclic} if it has a subnormal series
\begin{equation} \label{eq:subnormal}
\{ 1 \} = G_0 \leq G_1 \leq \cdots \leq G_r = G
\end{equation}
with the quotient $G_i/G_{i-1}$ cyclic for each $i$. The \emph{Hirsch length} of a polycyclic group $G$ as above is the number of $i \in \{ 1,\ldots,r \}$ such that $G_i/G_{i-1} \cong \Z$; it is well-known that the Hirsch length of $G$ does not depend on the choice of subnormal series as in \eqref{eq:subnormal}: see \cite[Exercise 1.8]{segal}.

\begin{lem} \label{lem:polycyclic}
Let $A \in GL_n(\Q)$ and $L \leq \Z^n \cap A^{-1}(\Z^n)$ be as above. Then $G(A,L)$ is polycyclic if and only if $L = AL = \Z^n$ (in which case it has Hirsch length $n+1$), and metabelian if and only if either $L = \Z^n$ or $AL = \Z^n$.
\end{lem}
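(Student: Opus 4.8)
The plan is to separate the ``if'' and ``only if'' directions and to reduce everything to the structure theory of ascending HNN-extensions (Lemma~\ref{lem:ascHNN}) and to Britton's Lemma (Proposition~\ref{prop:britton}). I first record a general symmetry: substituting $s = t^{-1}$ into the presentation~\eqref{eq:pres} exhibits an isomorphism $G(A,L) \cong G(A^{-1},AL)$ (the group on the right is defined, since $AL \leq \Z^n \cap A(\Z^n)$) which fixes $\Z^n$ pointwise and interchanges the two associated subgroups $L$ and $AL$. Since both conditions ``$L = \Z^n$ or $AL = \Z^n$'' and ``$L = AL = \Z^n$'' are preserved by this interchange, it suffices, when proving the ``if'' directions, to treat the case $L = \Z^n$.

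So suppose $L = \Z^n$. Then $\theta\colon \Z^n \to \Z^n$, $\mathbf{v} \mapsto A\mathbf{v}$, is an injective endomorphism and $G(A,L)$ is an ascending HNN-extension; by Lemma~\ref{lem:ascHNN}, $G(A,L) = K \rtimes \langle t \rangle$ with $K = \bigcup_{j \in \Z} t^j\Z^n t^{-j}$. As $K$ is the union of a chain of abelian subgroups it is abelian, so $G(A,L)$ is metabelian. If in addition $AL = \Z^n$, then $\theta$ is onto, so $A \in GL_n(\Z)$, $K = \Z^n$, and $G(A,L) = \Z^n \rtimes_A \langle t \rangle$; the subnormal series $\{1\} \leq \langle x_1 \rangle \leq \cdots \leq \langle x_1,\ldots,x_n \rangle \leq G(A,L)$ has $n+1$ infinite cyclic quotients, so $G(A,L)$ is polycyclic of Hirsch length $n+1$.

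For the ``only if'' direction concerning polycyclicity I prove the contrapositive: if $(L,AL) \neq (\Z^n,\Z^n)$ then the normal subgroup $N = \ker\bigl(G(A,L) \to \Z\bigr)$ (with $t \mapsto 1$ and $\mathbf{x}^{\mathbf{v}} \mapsto 0$) is not finitely generated, whence $G(A,L)$ is not polycyclic, since subgroups of polycyclic groups are finitely generated. For $K \geq 0$, let $N_K$ be the set of $g \in N$ admitting a reduced expression $h_0 t^{\alpha_1}\cdots t^{\alpha_m} h_m$ all of whose partial sums $\alpha_1 + \cdots + \alpha_i$ lie in $[-K,K]$. Since a reduction step can only delete occurring ``heights'', $N_K$ is a subgroup of $N$, with $N_0 = \Z^n$ and $N = \bigcup_K N_K$; and because $L \neq \Z^n$ or $AL \neq \Z^n$, picking $\mathbf{v} \in \Z^n \setminus L$ (respectively $\mathbf{v} \in \Z^n \setminus AL$) the word $t^{K+1}\mathbf{x}^{\mathbf{v}}t^{-(K+1)}$ (respectively $t^{-(K+1)}\mathbf{x}^{\mathbf{v}}t^{K+1}$) is reduced, and by the standard fact that the exponent sequence of a reduced word is an invariant of the element it represents (a consequence of Proposition~\ref{prop:britton}) it lies in $N_{K+1} \setminus N_K$. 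So $N$ is a strictly increasing union of subgroups, hence not finitely generated.

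For the ``only if'' direction concerning metabelianity, again by contraposition, assume $L \neq \Z^n$ and $AL \neq \Z^n$; pick $\mathbf{v} \in \Z^n \setminus L$, $\mathbf{w} \in \Z^n \setminus AL$ and set $g = [\mathbf{x}^{\mathbf{v}},t]$, $h = [\mathbf{x}^{\mathbf{w}},t^{-1}]$, both elements of $[G(A,L),G(A,L)]$. Writing $[g,h]$ as a word over $\{\mathbf{x}^{\mathbf{u}} : \mathbf{u} \in \Z^n\} \cup \{t\}$ and carrying out the single cancellation $t\cdot t = t^2$ yields a word of $t$-length $8$; a pinch in it would require one of $\pm\mathbf{v} \in L$ or $\pm\mathbf{w} \in AL$, none of which holds, so the word is reduced and $[g,h] \neq 1$ by Proposition~\ref{prop:britton}. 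Hence $[G(A,L),G(A,L)]$ is non-abelian and $G(A,L)$ is not metabelian. The points that will need care are the verification that $N_K$ is a subgroup and that consecutive $N_K$ differ (both resting on normal forms in HNN-extensions), together with the bookkeeping of the eight $t$-syllables appearing in $[g,h]$.
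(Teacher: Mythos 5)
Your proof is correct, and for the ``only if'' directions it takes a genuinely different route from the paper. For the ``if'' directions you and the paper both reduce to Lemma~\ref{lem:ascHNN}; your extra observation that $G(A,L)\cong G(A^{-1},AL)$ is a clean way to cut the casework in half, and the paper instead argues the symmetric case by saying ``a similar argument shows\ldots''.

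For non-polycyclicity, the paper splits into two subcases: when exactly one of $L,AL$ equals $\Z^n$ it exhibits the strictly ascending chain $H\lneq t^{-1}Ht\lneq t^{-2}Ht^2\lneq\cdots$ and cites a textbook fact that polycyclic groups satisfy the maximal condition; when $L\neq\Z^n\neq AL$ it produces a non-abelian free subgroup $\langle t, h_2th_1\rangle$ via Britton's Lemma, which rules out both polycyclicity and metabelianity at once. You instead treat all cases with $(L,AL)\neq(\Z^n,\Z^n)$ uniformly by filtering $N=\ker\psi$ by the bounded-height subgroups $N_K$ and showing the filtration is strict. This is a nice self-contained argument, though it invokes a slightly stronger form of the normal form theorem than Britton's Lemma as stated (namely that the exponent sequence, hence the set of partial sums, of a reduced word is an invariant of the element) --- the paper's free-subgroup argument needs something of comparable strength, so neither approach is really more elementary than the other. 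For non-metabelianity you compute a single explicit commutator $[g,h]$ in $G'$ with $g=[\mathbf{x}^{\mathbf{v}},t]$, $h=[\mathbf{x}^{\mathbf{w}},t^{-1}]$, and verify by Britton's Lemma that the resulting length-$8$ word is reduced; this is more concrete than the paper's free subgroup but does about the same amount of work. Everything checks out: the cancellation $t\cdot t=t^2$ leaves junctions $(1,\mathbf{x}^{-\mathbf{v}},-1)$, $(-1,\mathbf{x}^{-\mathbf{w}},2)$, $(2,\mathbf{x}^{\mathbf{v}},-1)$, $(-1,\mathbf{x}^{\mathbf{w}},1)$ as the only possible pinches, and each requires $\pm\mathbf{v}\in L$ or $\pm\mathbf{w}\in AL$, contrary to choice. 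One small trade-off: the paper's free subgroup gives non-polycyclicity ``for free'' in the case $L\neq\Z^n\neq AL$, whereas your non-metabelianity computation does not, so you genuinely need your separate $N_K$ argument to cover that case --- which it does.
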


\begin{proof}
Let $G = G(A,L)$, and let $H = \langle x_1,\ldots,x_n \rangle < G$.

If $L = AL = \Z^n$, then the map $H \to H, h \mapsto tht^{-1}$ is an isomorphism and so $G = H \rtimes \langle t \rangle$ by Lemma~\ref{lem:ascHNN}. Thus $G$ is polycyclic of Hirsch length $n+1$.

If $L = \Z^n \neq AL$, then $G$ is a strictly ascending HNN-extension of $H$, implying that we have a strictly ascending chain of subgroups $H \lneq t^{-1}Ht \lneq t^{-2}Ht^2 \lneq \cdots$ of $G$; thus $G$ cannot be polycyclic: see \cite[Proposition~1.4]{segal}. On the other hand, $G = K \rtimes \langle t \rangle$ by Lemma~\ref{lem:ascHNN}, where $K = \bigcup_{j \in \Z} t^jHt^{-j}$ is the union of an ascending chain of abelian subgroups and so abelian; thus $G$ is metabelian. A similar argument shows that $G$ is metabelian but not polycyclic whenever $AL = \Z^n \neq L$.

Finally, suppose that $L \neq \Z^n \neq AL$. Then $G$ is an HNN-extension of $H$ whose associated subgroups $H_1 = \langle \mathbf{x}^{\mathbf{v}} \mid \mathbf{v} \in L \}$ and $H_2 = \langle \mathbf{x}^{\mathbf{v}} \mid \mathbf{v} \in AL \}$ are both proper in $H$. In this case, it follows from Proposition~\ref{prop:britton} that $t$ and $h_2th_1$ generate a non-abelian free subgroup of $G$ for any $h_1 \in H \setminus H_1$ and $h_2 \in H \setminus H_2$; in particular, $G$ is not metabelian.
\end{proof}

\begin{prop} \label{prop:elliptic}
Let $A \in GL_n(\Q)$ and $L \leq \Z^n \cap A^{-1}(\Z^n)$ be as above, with either $L \neq \Z^n$ or $AL \neq \Z^n$, and let $g \in G := G(A,L)$. Then $g$ is elliptic (with respect to the action of $G$ on $T(A,L)$) if and only if $\langle g \rangle \cap C_G(h)$ is non-trivial for every $h \in G'$.
\end{prop}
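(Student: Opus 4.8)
The plan is to combine the combinatorial description of ellipticity (an element is elliptic precisely when it is conjugate into $H = \langle x_1,\dots,x_n\rangle$) with Corollary~\ref{cor:centralisers} for the ``only if'' direction, and to split the ``if'' direction according to whether $G = G(A,L)$ is metabelian. Throughout, note that the condition on the right is invariant under replacing $g$ by a conjugate: since $G' \unlhd G$, for $f \in G$ we have $\langle fgf^{-1}\rangle \cap C_G(h) = f\big(\langle g\rangle \cap C_G(f^{-1}hf)\big)f^{-1}$, and $f^{-1}hf$ ranges over $G'$ as $h$ does; ellipticity is conjugation-invariant too. (For $g = 1$ the right-hand condition fails, as $G' \neq \{1\}$ under the standing hypothesis; we read the statement as excluding this case and henceforth assume $g \neq 1$.) Suppose first $g$ is elliptic, so after conjugating $g = \mathbf{x}^{\mathbf{w}} \in H$ with $\mathbf{w} \neq \mathbf{0}$. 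Let $\pi\colon G \to \Z$ be the retraction with $\pi(x_i) = 0$, $\pi(t) = 1$; then $G' \leq \ker\pi$. Every element of $\ker\pi$ admits a semi-reduced representative: a reduced word representing it has total $t$-exponent $0$, and grouping its $H$-syllables according to the cumulative $t$-exponent at which each sits rewrites it as $\prod_i t^{\alpha_i}\mathbf{x}^{\mathbf{v}_i}t^{-\alpha_i}$ whose associated word is that reduced word. Applying Corollary~\ref{cor:centralisers} to such a representative of $h \in G'$, the set of $\mathbf{v} \in \Z^n$ with $\mathbf{x}^{\mathbf{v}} \in C_G(h)$ equals $\Lambda_h \cap \Z^n$, where $\Lambda_h = \bigcap_{j = \gamma_- + 1}^{\gamma_+} A^jL$ is a finite intersection of full-rank lattices in $\Q^n$, hence of finite index in $\Z^n$; so $g^{[\Z^n : \Lambda_h \cap \Z^n]} = \mathbf{x}^{[\Z^n : \Lambda_h \cap \Z^n]\mathbf{w}}$ is a non-trivial element of $\langle g\rangle \cap C_G(h)$.

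For the converse we prove the contrapositive: given $g$ hyperbolic, we produce $h \in G'$ with $\langle g\rangle \cap C_G(h) = \{1\}$. Let $\ell$ be the axis of $g$ and $G_\ell := \Stab_G(\ell)$. If $m \neq 0$ and $h \in G$ commutes with $g^m$, then $h$ conjugates the hyperbolic element $g^m$ to itself and so preserves its axis $\ell$; thus $h \in G_\ell$. Hence it is enough to find $h \in G' \setminus G_\ell$ --- unless that is impossible, which (as we shall see) can only happen when $G$ is metabelian, a case treated separately below.

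Suppose $G$ is not metabelian, so $L \neq \Z^n \neq AL$ by Lemma~\ref{lem:polycyclic}. The subgroup $S_\ell := \bigcap_{e \subseteq \ell}\Stab_G(e)$ is normal in $G_\ell$ and abelian, being contained in $\Stab_G(v) \cong \Z^n$ for $v$ an endpoint of an edge of $\ell$; and $G_\ell / S_\ell$ embeds into the isometry group of $\ell \cong \R$, in fact (since $G$ acts on $T(A,L)$ without edge inversions) into the infinite dihedral group. So $G_\ell$ is solvable. If $G' \subseteq G_\ell$, then $G_\ell \unlhd G$ with $G / G_\ell$ abelian, making $G$ solvable --- contradicting that a non-metabelian $G(A,L)$ contains a non-abelian free subgroup (see the proof of Lemma~\ref{lem:polycyclic}). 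So some $h \in G' \setminus G_\ell$ exists, and by the previous paragraph no non-trivial power of $g$ commutes with $h$.

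Finally, suppose $G$ is metabelian. By Lemma~\ref{lem:polycyclic} and the standing hypothesis (which excludes the polycyclic case), either $L = \Z^n \neq AL$ or $AL = \Z^n \neq L$; applying the isomorphism $G(A,L) \cong G(A^{-1}, AL)$ sending $t \mapsto t^{-1}$ and fixing each $\mathbf{x}^{\mathbf{v}}$ --- which respects the Bass--Serre tree, $G'$, and the set of elliptic elements --- we may assume $L = \Z^n \neq AL$. Then $A$ is an integer matrix with $|\det A| \geq 2$ (as $A\Z^n = AL \subsetneq \Z^n$), and by Lemma~\ref{lem:ascHNN}, $G = K \rtimes \langle t\rangle$ with $K$ identified with $\widehat{K} := \bigcup_{j \geq 0} A^{-j}(\Z^n) \leq \Q^n$ and $t$ acting as multiplication by $A$; here $\ker\pi = K$ is exactly the set of elliptic elements, so $g$ is hyperbolic if and only if $m := \pi(g) \neq 0$. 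In these coordinates $G' = (A - I)\widehat{K}$, and writing $g = (\mathbf{v}, m)$, $h = (\mathbf{u}, 0)$, a short computation in $\widehat{K} \rtimes_A \langle t\rangle$ --- using $[(\mathbf{a}, j),(\mathbf{b}, 0)] = ((A^j - I)\mathbf{b}, 0)$ --- gives $[g^k, h] = ((A^{km} - I)\mathbf{u}, 0)$. Thus $\langle g\rangle \cap C_G(h) \neq \{1\}$ if and only if $\mathbf{u}$ lies in $W := \bigcup_{k \neq 0}\ker(A^{km} - I)$, a directed union of subspaces of $\Q^n$ and hence an $A$-invariant subspace $W = \ker(A^{k_0 m} - I)$ for some $k_0$. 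If $(A - I)(\Q^n) \subseteq W$, then $A$ would be the identity on $\Q^n / W$ and of finite order on $W$, whence $\det A = \pm 1$ --- a contradiction. So $(A - I)\widehat{K}$, which spans $(A - I)(\Q^n) \not\subseteq W$, is not contained in $W$; picking $\mathbf{u} \in (A - I)\widehat{K} \setminus W$ and setting $h := (\mathbf{u}, 0) \in G'$ completes the proof. I expect this metabelian case to be the main obstacle: since $G'$ is abelian here, the soft non-solvability argument is unavailable, and one must identify $G'$ and the relevant centralisers explicitly inside $\widehat{K} \rtimes_A \langle t\rangle$ and exploit $|\det A| \geq 2$ to keep $W$ a proper subspace; in the ``only if'' direction, the point needing care is that every element of $\ker\pi$ admits a semi-reduced representative.
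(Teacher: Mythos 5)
Your forward direction is essentially the paper's: after conjugating so that $g = \mathbf{x}^{\mathbf{w}} \in H$, you write $h \in G'$ as a semi-reduced word (using that $G'$ lies in the kernel of the $t$-exponent map) and invoke Corollary~\ref{cor:centralisers} together with the finite index of $\bigcap_{j} A^jL$. The caveat about $g=1$ is a real edge case that the paper silently glosses over, and your handling of it is reasonable.

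The converse is where you genuinely diverge. Both you and the paper start from the same observation: any element commuting with a non-trivial power of $g$ must preserve the axis $\ell$, so the hypothesis on $g$ would force all of $G'$ to stabilise $\ell$. The paper then derives a contradiction \emph{uniformly}: the induced map $G' \to \Aut(\ell) \cong D_\infty$ has kernel inside a vertex stabiliser $\cong \Z^n$, so the chain $1 \unlhd \ker\Phi \unlhd G' \unlhd G$ has polycyclic factors, making $G$ polycyclic and contradicting Lemma~\ref{lem:polycyclic} directly (since $L \neq \Z^n$ or $AL \neq \Z^n$). You instead split into two cases: non-metabelian, where you derive that $G$ is solvable and contradict the presence of a non-abelian free subgroup; and metabelian non-polycyclic, where you do an explicit computation in $\widehat{K}\rtimes_A\Z$ using $|\det A|\geq 2$ to show $(A-I)\Q^n \not\subseteq W$. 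Your metabelian argument is correct and self-contained, but it is considerably more work than the paper's single paragraph; the insight you are missing is that one need not decide whether $G'$ actually stabilises $\ell$ --- assuming it does immediately yields polycyclicity, which the standing hypothesis forbids. One slip in the non-metabelian case: $G' \subseteq G_\ell$ does \emph{not} give $G_\ell \unlhd G$ (stabilisers of axes conjugate to $G_{g\ell}$, not to $G_\ell$). This is harmless, since all you need is that $G'$ is solvable as a subgroup of the solvable group $G_\ell$, and then $G/G'$ abelian gives $G$ solvable --- but the line as written should be corrected.
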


\begin{proof}
Suppose first that $g \in G$ is elliptic. By replacing $g$ with its conjugate if necessary, we may assume that $g \in \langle x_1,\ldots,x_n \rangle$, that is, $g = \mathbf{x}^{\mathbf{w}}$ for some $\mathbf{w} \in \Z^n$. Let $h \in G'$, and note that we have a homomorphism $\psi\colon G \to \Z$ sending $t \mapsto 1$ and $x_i \mapsto 0$ for all $i$. As $\Z$ is abelian, it follows that $\psi(h) = 0$, and so we have $h = \prod_{i=1}^k t^{\alpha_i} \mathbf{x}^{\mathbf{v}_i} t^{-\alpha_i}$ for some $\alpha_1,\ldots,\alpha_k \in \Z$ and $\mathbf{v}_1,\ldots,\mathbf{v}_k \in \Z^n$. Let $\gamma = \max \{ |\alpha_1|,\ldots,|\alpha_k| \}$.

Now note that, since $A^j \in GL_n(\Q)$ for all $j \in \Z$, the group $A^jL \cap \Z^n$ is a finite-index subgroup of $\Z^n$ for each $j \in \Z$. Therefore, $\bigcap_{j=-\gamma+1}^\gamma A^jL$ has finite index in $\Z^n$ (as it is an intersection of finitely many finite-index subgroups), and so there exists $m \in \Z_{\geq 1}$ such that $m\mathbf{w} \in \bigcap_{j=-\gamma+1}^\gamma A^jL$. Then $g^m = \mathbf{x}^{m\mathbf{w}} \in C_G(h)$ by Corollary~\ref{cor:centralisers}, and so $\langle g \rangle \cap C_G(h)$ is non-trivial, as required.

Conversely, let $g \in G$ be hyperbolic, and let $\ell \subseteq T(A,L)$ be the axis of $g$, as described in Proposition~\ref{prop:axis}. Suppose for contradiction that $\langle g \rangle \cap C_G(h)$ is non-trivial for each $h \in G'$. Then, given $h \in G'$, there exists $m \geq 1$ such that $g^m$ commutes with $h$. Now note that $\ell$ (respectively $h\ell$) is the axis of $g^m$ (respectively $hg^mh^{-1}$), so since $g^m = hg^mh^{-1}$ it follows by the uniqueness of axes of hyperbolic elements that $h\ell = \ell$. In particular, since this is true for any $h \in G'$, it follows that $\ell$ is invariant under the action of $G'$.

But this implies that we have a group homomorphism $\Phi\colon G' \to \Aut(\ell) \cong D_\infty$, where $\Aut(\ell)$ is the group of graph automorphisms of $\ell$, and therefore $G'/\ker(\Phi)$ is isomorphic to a subgroup of the infinite dihedral group $D_\infty$. Moreover, given any vertex $x \in \ell$ we have $\ker(\Phi) \leq \Stab_{G'}(x) \leq \Stab_G(x) \cong \Z^n$. As $G$ is finitely generated, we also know that $G/G'$ is finitely generated abelian. It follows that we have a subnormal series $1 \unlhd \ker(\Phi) \unlhd G' \unlhd G$, with each quotient of consecutive terms polycyclic. This implies that $G$ is polycyclic, contradicting Lemma~\ref{lem:polycyclic}.

Thus $\langle g \rangle \cap C_G(h)$ must be trivial for some $h \in G'$, as required.
\end{proof}

\section{Non-metabelian groups} \label{sec:large}

In this section, we give a few results on non-metabelian Leary--Minasyan groups; by Lemma~\ref{lem:polycyclic}, we know that $G(A,L)$ is not metabelian precisely when $L \neq \Z^n \neq AL$. We use the following result to obtain algebraic information about vertex and edge stabilisers under the action of $G(A,L)$ on $T(A,L)$.

In the following result, we say a subgroup $H_0 < G(A,L)$ is \emph{elliptic} if its action on $T(A,L)$ has a global fixed point. Thus any elliptic subgroup consists of elliptic elements; the converse is true when the subgroup is finitely generated: see \cite[Corollary~3 on p.~65]{serre}.

\begin{lem} \label{lem:large}
Let $A \in GL_n(\Q)$ and $L \leq \Z^n \cap A^{-1}(\Z^n)$ be as above, with $L \neq \Z^n$ and $AL \neq \Z^n$, and let $H = \langle x_1,\ldots,x_n \rangle < G := G(A,L)$. Then the following hold.
\begin{enumerate}[label=\textup{(\roman*)}]
\item \label{it:large-H} $H$ is a maximal elliptic subgroup, unique such up to conjugation.
\item \label{it:large-psi} The map sending $x_i \mapsto 0$ (for $1 \leq i \leq n$) and $t \mapsto 1$ extends to a surjective group homomorphism $\psi\colon G \to \Z$ with kernel $K := \langle\!\langle H \rangle\!\rangle$. Moreover, for each $g \in G$ there exists an integer $m = m(g) \geq 1$ such that $g \mathbf{x}^{\mathbf{v}} g^{-1} = \mathbf{x}^{A^{\psi(g)}\mathbf{v}}$ for all $\mathbf{v} \in m\Z^n$.
\item \label{it:large-H1} The set $H_1 = \{ g \in H \mid C_G(g) \neq H \}$ is equal to $\{ \mathbf{x}^{\mathbf{v}} \mid \mathbf{v} \in AL \cup L \}$. In particular, $H_1$ is a subgroup of $H$ if and only if either $AL \subseteq L$ or $L \subseteq AL$.
\item \label{it:large-H1c} We have $AL = L$ if and only if $H_1 \subseteq Z(K)$.
\end{enumerate}
\end{lem}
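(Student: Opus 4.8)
The plan is to read all four statements off the action of $G$ on its Bass--Serre tree $T = T(A,L)$, combining Corollary~\ref{cor:centralisers}, Britton's Lemma (Proposition~\ref{prop:britton}), the fact that the vertex stabilisers of $T$ are exactly the $G$-conjugates of $H$, and the explicit description of the edges at the base vertex $v_0 := H \in V(T)$ given just before Lemma~\ref{lem:stabilisers}. For \ref{it:large-H}: since $L \neq \Z^n \neq AL$, Lemma~\ref{lem:stabilisers}\ref{it:stab-reduced} shows that every edge stabiliser is a proper subgroup of each incident vertex stabiliser, so $H$ fixes no edge of $T$, whence $v_0$ is the only vertex fixed by $H$. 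Thus an elliptic subgroup containing $H$ fixes a vertex, necessarily $v_0$, hence lies in $\Stab_G(v_0) = H$; so $H$ is maximal elliptic. Any maximal elliptic subgroup fixes a vertex, so is contained in --- and, being maximal, equal to --- a conjugate of $H$; and every conjugate of $H$ is maximal elliptic by the same argument, which gives uniqueness up to conjugation.

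For \ref{it:large-psi}, the assignment $x_i \mapsto 0$, $t \mapsto 1$ kills every relator of \eqref{eq:pres} (the relator $t\mathbf{x}^{\mathbf{v}}t^{-1}(\mathbf{x}^{A\mathbf{v}})^{-1}$ maps to $1 - 1 = 0$), so $\psi$ is a well-defined surjection; it factors through $G/K$, which is generated by the image of $t$, and the induced map $G/K \to \Z$ is a left inverse of the map $1 \mapsto tK$, so $\ker\psi = K$. For the last clause, write $g = s_1 \cdots s_r$ with each $s_i \in \{t, t^{-1}\} \cup \{\mathbf{x}^{\mathbf{w}} : \mathbf{w} \in \Z^n\}$ and induct on $r$, peeling off $s_1$: conjugation by $\mathbf{x}^{\mathbf{w}}$ fixes $H$ pointwise, while conjugation by $t^{\pm 1}$ sends $\mathbf{x}^{\mathbf{v}} \mapsto \mathbf{x}^{A^{\pm 1}\mathbf{v}}$ for every $\mathbf{v}$ in a suitable finite-index subgroup of $\Z^n$ (using that $L$ and $AL$ have finite index); at each of the finitely many steps one replaces the current modulus $m$ by a sufficiently divisible multiple.

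For \ref{it:large-H1}, since $H$ is abelian we always have $H \leq C_G(\mathbf{x}^{\mathbf{v}})$, so $\mathbf{x}^{\mathbf{v}} \in H_1$ exactly when some $g \notin H$ centralises $\mathbf{x}^{\mathbf{v}}$. If $\mathbf{v} \in AL$, pick $\mathbf{w} \in \Z^n \setminus L$ (possible as $L \neq \Z^n$): then $t\mathbf{x}^{\mathbf{w}}t^{-1}$ is a semi-reduced word with $\gamma_- = 0$, $\gamma_+ = 1$, so it centralises $\mathbf{x}^{\mathbf{v}}$ by Corollary~\ref{cor:centralisers}, while Britton's Lemma gives $t\mathbf{x}^{\mathbf{w}}t^{-1} \notin H$; thus $\mathbf{x}^{\mathbf{v}} \in H_1$. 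Symmetrically, if $\mathbf{v} \in L$ then $t^{-1}\mathbf{x}^{\mathbf{w}}t$ with $\mathbf{w} \in \Z^n \setminus AL$ has $\gamma_- = -1$, $\gamma_+ = 0$ and centralises $\mathbf{x}^{\mathbf{v}}$, so $\mathbf{x}^{\mathbf{v}} \in H_1$. Conversely, suppose $\mathbf{v} \notin L \cup AL$; then $\mathbf{x}^{\mathbf{v}} \neq 1$ (as $0 \in L$), and by the description of the edges at $v_0$ it fixes none of them, so the fixed-point subtree of $\mathbf{x}^{\mathbf{v}}$ equals $\{v_0\}$. Any $g$ centralising $\mathbf{x}^{\mathbf{v}}$ carries this subtree to the fixed-point subtree of $g\mathbf{x}^{\mathbf{v}}g^{-1} = \mathbf{x}^{\mathbf{v}}$, hence fixes $v_0$, i.e.\ $g \in H$. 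Therefore $H_1 = \{\mathbf{x}^{\mathbf{v}} : \mathbf{v} \in L \cup AL\}$, and this is a subgroup precisely when $L \cup AL$ is a subgroup of $\Z^n$, i.e.\ when $L \subseteq AL$ or $AL \subseteq L$ (a union of two subgroups being a subgroup iff one contains the other).

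For \ref{it:large-H1c}, first note that $K = \langle t^j\mathbf{x}^{\mathbf{w}}t^{-j} : j \in \Z,\ \mathbf{w} \in \Z^n \rangle$, since the right-hand side contains every $x_i$ and is normalised by $t$, hence is normal in $G$ and contains $H$. If $AL = L$ then $A^jL = L$ for all $j$, so each generator $t^j\mathbf{x}^{\mathbf{w}}t^{-j}$ either lies in $H$ (it equals $\mathbf{x}^{A^j\mathbf{w}}$ when $\mathbf{w} \in L$) or is semi-reduced with $\bigcap_{i=\gamma_-+1}^{\gamma_+} A^iL = L$; by Corollary~\ref{cor:centralisers} it then centralises every $\mathbf{x}^{\mathbf{v}}$ with $\mathbf{v} \in L$, and since $H_1 = \{\mathbf{x}^{\mathbf{v}} : \mathbf{v} \in L\}$ by \ref{it:large-H1} we get $H_1 \subseteq Z(K)$. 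If instead $AL \neq L$, then either $L \not\subseteq AL$, in which case $\mathbf{v} \in L \setminus AL$ gives $\mathbf{x}^{\mathbf{v}} \in H_1$ not centralised by $t\mathbf{x}^{\mathbf{w}}t^{-1} \in K$ for any $\mathbf{w} \in \Z^n \setminus L$; or $AL \not\subseteq L$, in which case $\mathbf{v} \in AL \setminus L$ gives $\mathbf{x}^{\mathbf{v}} \in H_1$ not centralised by $t^{-1}\mathbf{x}^{\mathbf{w}}t \in K$ for any $\mathbf{w} \in \Z^n \setminus AL$ --- the non-centralising assertions again coming from Corollary~\ref{cor:centralisers} --- so $H_1 \not\subseteq Z(K)$. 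The main obstacle I anticipate is the converse half of \ref{it:large-H1}: a priori $C_G(\mathbf{x}^{\mathbf{v}})$ may meet every fibre of $\psi$, so Corollary~\ref{cor:centralisers} on its own (it controls only centralisers inside $K$) is not enough, and one genuinely needs the geometric fact that the fixed-point subtree of $\mathbf{x}^{\mathbf{v}}$ shrinks to $\{v_0\}$ as soon as $\mathbf{v} \notin L \cup AL$; by comparison the bookkeeping of moduli in \ref{it:large-psi} is a routine secondary point.
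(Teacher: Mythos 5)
Your proof is correct and follows the same high-level strategy as the paper's: read everything off the $G$-action on $T(A,L)$ using Corollary~\ref{cor:centralisers}, Britton's Lemma, and the explicit description of edge stabilisers at the base vertex. The local details vary a little --- in \ref{it:large-psi} you induct on word length over generators rather than passing to a semi-reduced normal form, and in the forward direction of \ref{it:large-H1} you construct explicit non-trivial centralising elements $t^{\pm 1}\mathbf{x}^{\mathbf{w}}t^{\mp 1}$ via Corollary~\ref{cor:centralisers} where the paper instead exhibits a second fixed vertex and invokes Lemma~\ref{lem:stabilisers}\ref{it:stab-reduced} --- but these are equivalent routes through the same toolkit, and your worry about needing the geometric fixed-point argument for the converse of \ref{it:large-H1} is exactly the point the paper also relies on.
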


\begin{proof}
Let $T = T(A,L)$ be the Bass--Serre tree corresponding to the splitting of $G = G(A,L)$ as an HNN-extension, and let $a \in V(T)$ be such that $H = \Stab_G(a)$.

\begin{enumerate}[label=(\roman*)]

\item Since $H$ stabilises $a \in V(T)$, it is elliptic. To show maximality, suppose for contradiction that there exists an element $g \in G \setminus H$ such that the subgroup $\widetilde{H} = \langle H,g \rangle$ is elliptic. Let $\widetilde{a} \in V(T)$ be a vertex stabilised by $\widetilde{H}$; since $\Stab_G(a) = H \subsetneq \widetilde{H}$, we have $\widetilde{a} \neq a$. Now if $e \subseteq T$ is the first edge of $T$ on the geodesic from $a$ to $\widetilde{a}$, then $H$ stabilises $e$ (as it stabilises both $a$ and $\widetilde{a}$), and so $\Stab_G(e) \subseteq \Stab_G(a) = H \subseteq \Stab_G(e)$, implying that $\Stab_G(e) = \Stab_G(a)$; by Lemma~\ref{lem:stabilisers}\ref{it:stab-reduced}, this contradicts the fact that $L \neq \Z^n$ and $AL \neq \Z^n$. Thus $H$ must be a maximal elliptic subgroup, as required.

To show uniqueness, note that any maximal elliptic subgroup must be a vertex stabiliser under the action of $G$ on $T$. But since this action is transitive on vertices, it follows that any such subgroup is a conjugate of $H$.

\item The fact that $\psi\colon G \to \Z$ is a well-defined surjective homomorphism with kernel $\langle\!\langle H \rangle\!\rangle$ follows directly from the presentation \eqref{eq:pres} of $G = G(A,L)$.

As $L$ is a finite-index subgroup of $\Z^n$ and as $A \in GL_n(\Q)$, the group $A^jL \cap \Z^n$ is a finite-index subgroup of $\Z^n$ for any $j \in \Z$, and hence so is the intersection $\bigcap_{j=-\gamma+1}^\gamma A^jL$ for any $\gamma \in \Z_{\geq 0}$: therefore, there exists $m = m(\gamma) \in \Z_{\geq 1}$ such that $m\Z^n \subseteq \bigcap_{j=-\gamma+1}^\gamma A^jL$. Now let $g \in G$: we can write $g = t^{\psi(g)} \cdot \prod_{i=1}^k t^{\alpha_i} \mathbf{x}^{\mathbf{v}_i} t^{-\alpha_i}$ for some $\alpha_1,\ldots,\alpha_k \in \Z$ and $\mathbf{v}_1,\ldots,\mathbf{v}_k \in \Z^n$, where the word $\prod_{i=1}^k t^{\alpha_i} \mathbf{x}^{\mathbf{v}_i} t^{-\alpha_i}$ is semi-reduced. Let $\gamma = \max \{ |\psi(g)|, |\alpha_1|, \ldots, |\alpha_k| \}$, and let $m = m(\gamma) \in \Z_{\geq 1}$ be as above. Given $\mathbf{v} \in m\Z^n$, it then follows from Corollary~\ref{cor:centralisers} that $\mathbf{x}^{\mathbf{v}}$ commutes with $\prod_{i=1}^k t^{\alpha_i} \mathbf{x}^{\mathbf{v}_i} t^{-\alpha_i}$. Moreover, $t^{\psi(g)} \mathbf{x}^{\mathbf{v}} t^{-\psi(g)} = \mathbf{x}^{A^{\psi(g)}\mathbf{v}}$: we can show this by induction on $|\psi(g)|$, see the proof of Corollary~\ref{cor:centralisers}. Therefore,
\begin{align*}
g \mathbf{x}^{\mathbf{v}} g^{-1} &= t^{\psi(g)} \cdot \left( \prod_{i=1}^k t^{\alpha_i} \mathbf{x}^{\mathbf{v}_i} t^{-\alpha_i} \right) \cdot \mathbf{x}^{\mathbf{v}} \cdot \left( \prod_{i=1}^k t^{\alpha_i} \mathbf{x}^{\mathbf{v}_i} t^{-\alpha_i} \right)^{-1} \cdot t^{-\psi(g)} \\ &= t^{\psi(g)} \mathbf{x}^{\mathbf{v}} t^{-\psi(g)} = \mathbf{x}^{A^{\psi(g)}\mathbf{v}}
\end{align*}
for all $\mathbf{v} \in m\Z^n$, as required.

\item As $H$ is abelian, we have $C_G(h) \supseteq H$ for all $h \in H$. Therefore, $H_1$ consists of precisely those elements of $H$ whose centralisers contain $H$ as a proper subgroup. Now given $h \in H$, the centraliser $C_G(h)$ leaves the fixed point set of $h$ invariant, and so if $C_G(h) \supsetneq H$ then the fixed point set of $h$ must properly contain $\{a\}$. Conversely, suppose that $hb = b$ for some $b \in V(T) \setminus \{a\}$. Then $b = ga$ for some $g \in G \setminus H$ since the action $G \curvearrowright T$ is transitive on vertices, and so $h \in gHg^{-1}$; therefore, $gHg^{-1} \subseteq C_G(h)$ since $gHg^{-1}$ is abelian. On the other hand, since $L \neq \Z^n \neq AL$, Lemma~\ref{lem:stabilisers}\ref{it:stab-reduced} implies that there exists $h_0 \in \Stab_G(b) = gHg^{-1}$ such that $h_0a \neq a$, and so $h_0 \notin H$: thus $H$ must be a proper subgroup of $C_G(h)$.

This argument shows that a given $h \in H$ is an element of $H_1$ if and only if $hb = b$ for some $b \in V(T) \setminus \{a\}$, which happens if and only if $he = e$ for some edge $e$ starting at $a$. But the stabilisers of edges of $T$ starting at $a$ are precisely $\{ \mathbf{x}^{\mathbf{v}} \mid \mathbf{v} \in L \}$ and $\{ \mathbf{x}^{\mathbf{v}} \mid \mathbf{v} \in AL \}$, implying that $H_1 = \{ \mathbf{x}^{\mathbf{v}} \mid \mathbf{v} \in AL \cup L \}$, as required.

For the second part, note that if $AL \subseteq L$ then $H_1 = \{ \mathbf{x}^{\mathbf{v}} \mid \mathbf{v} \in L \}$, whereas if $L \subseteq AL$ then $H_1 = \{ \mathbf{x}^{\mathbf{v}} \mid \mathbf{v} \in AL \}$ -- so in both cases $H_1$ is a subgroup of $H$. Otherwise, there exist elements $\mathbf{v} \in AL \setminus L$ and $\mathbf{w} \in L \setminus AL$, and we have $\mathbf{v}+\mathbf{w} \notin L \cup AL$; therefore, $\mathbf{x}^{\mathbf{v}},\mathbf{x}^{\mathbf{w}} \in H_1$ but $\mathbf{x}^{\mathbf{v}}\mathbf{x}^{\mathbf{w}} = \mathbf{x}^{\mathbf{v}+\mathbf{w}} \notin H_1$, and so $H_1$ is not a subgroup of $H$.

\item By part~\ref{it:large-H1}, we know that $H_1 = \{ \mathbf{x}^{\mathbf{v}} \mid \mathbf{v} \in AL \cup L \}$.

Suppose first that $L = AL$, and let $\mathbf{x}^{\mathbf{v}} \in H_1$, so that $\mathbf{v} \in L$. Since $AL = L$, we have $A^jL = L$ for each $j \in \Z$, and so $\mathbf{v} \in L = \bigcap_{j=-\infty}^\infty A^jL$. It then follows from Corollary~\ref{cor:centralisers} that $\mathbf{x}^{\mathbf{v}} \in Z(K)$: therefore, $H_1 \subseteq Z(K)$, as required.

Conversely, suppose that $L \neq AL$, and let $\mathbf{v} \in (L \cup AL) \setminus (L \cap AL)$. Then $\mathbf{x}^{\mathbf{v}} \in H_1$. However, if $\mathbf{v} \notin L$ then for any $\mathbf{w} \in \Z^n \setminus AL$ we have $\mathbf{x}^{\mathbf{v}} \notin C_K(t^{-1} \mathbf{x}^{\mathbf{w}} t)$ by Corollary~\ref{cor:centralisers}; similarly, if $\mathbf{v} \notin AL$ then $\mathbf{x}^{\mathbf{v}} \notin C_K(t \mathbf{x}^{\mathbf{w}} t^{-1})$ for any $\mathbf{w} \in \Z^n \setminus L$. Thus in either case $\mathbf{x}^{\mathbf{v}} \notin Z(K)$ and so $H_1 \nsubseteq Z(K)$, as required. \qedhere

\end{enumerate}
\end{proof}

The following result can be extracted from Theorem~1.2 and the implication (d)~$\Rightarrow$~(a) of Theorem~1.1 in \cite{forester}.

\begin{thm}[M.~Forester {\cite[Theorems 1.1 and 1.2]{forester}}] \label{thm:forester}
Let $G$ be a group acting cocompactly on simplicial trees $T$ and $\1T$ by automorphisms and without edge inversions. Suppose that the sets of elliptic elements of $G$ with respect to the two actions coincide. Moreover, suppose that
\begin{enumerate}[label=\textup{(\roman*)}]
\item \label{it:forester-slidefree} for any edges $e,f \subseteq T$ starting at a vertex $v \in V(T)$, if $\Stab_G(e) \subseteq \Stab_G(f)$ then $ge = f$ for some $g \in \Stab_G(v)$; and
\item \label{it:forester-reduced} for any edge $\1e \subseteq \1T$ incident to a vertex $\1v \in V(\1T)$, we have $\Stab_G(\1e) \subsetneq \Stab_G(\1v)$.
\end{enumerate}
Then there is a unique $G$-equivariant isomorphism $T \to \1T$.
\end{thm}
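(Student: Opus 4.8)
The plan is to deduce this statement from M.~Forester's study of deformation spaces of cocompact simplicial $G$-trees \cite{forester}. Recall that two cocompact $G$-trees are said to lie in the same \emph{deformation space} if one is obtained from the other by a finite sequence of elementary expansions and collapses of edge orbits. The two facts I would take from \cite{forester} are, roughly: that two cocompact $G$-trees lie in the same deformation space if and only if they have the same elliptic subgroups (the implication (d)~$\Rightarrow$~(a) of \cite[Theorem~1.1]{forester}); and that a cocompact $G$-tree which is \emph{reduced} and \emph{strongly slide-free} is, via a unique $G$-equivariant isomorphism, isomorphic to any reduced $G$-tree in its deformation space, and in particular admits no non-trivial $G$-equivariant automorphisms (this is \cite[Theorem~1.2]{forester}). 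Here a $G$-tree is \emph{reduced} if no edge orbit can be collapsed, and \emph{strongly slide-free} if, whenever $e$ and $f$ are edges with a common origin $v$ and $\Stab_G(e) \subseteq \Stab_G(f)$, then $f = ge$ for some $g \in \Stab_G(v)$; so hypothesis~\ref{it:forester-slidefree} is precisely the assertion that $T$ is strongly slide-free.

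The first step is to recast the hypothesis that $T$ and $\1T$ have the same elliptic \emph{elements} as the assertion that they have the same elliptic \emph{subgroups}. For a finitely generated subgroup $H \leq G$ this is immediate: if $H$ is elliptic for one of the two actions then all of its elements are, hence all of its elements are elliptic for the other action too, whence $H$ is elliptic for the other action by \cite[Corollary~3 on p.~65]{serre}. The general case reduces to this by expressing an elliptic subgroup as a directed union of finitely generated subgroups and using cocompactness to control the resulting nested family of fixed-point subtrees (alternatively, this point is built into Forester's formulation of condition (d)). Granting this, \cite[Theorem~1.1]{forester} applies and shows that $T$ and $\1T$ lie in the same deformation space.

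The second step is to note that $T$ is itself reduced. Assuming as usual that the cocompact action of $G$ on $T$ is minimal, $T$ has no vertex of valence $1$; and if some edge $e$ with origin $v$ had $\Stab_G(e) = \Stab_G(v)$, then every edge $f$ with origin $v$ would satisfy $\Stab_G(f) \subseteq \Stab_G(v) = \Stab_G(e)$, so hypothesis~\ref{it:forester-slidefree} would put $e$ in the $\Stab_G(v)$-orbit of $f$; but $\Stab_G(e) = \Stab_G(v)$ fixes $e$, so that orbit is $\{e\}$, forcing $f = e$ and hence $v$ to have valence $1$ -- a contradiction. Thus $\Stab_G(e) \subsetneq \Stab_G(v)$ for every edge $e$ of $T$ and every endpoint $v$ of $e$, so $T$ is reduced; and the same implication, applied to $\1T$, shows that hypothesis~\ref{it:forester-reduced} makes $\1T$ reduced.

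With these in place the proof concludes quickly: $T$ is reduced and strongly slide-free, and $\1T$ is a reduced $G$-tree in the same deformation space, so \cite[Theorem~1.2]{forester} provides a $G$-equivariant isomorphism $T \to \1T$, unique because $T$ has no non-trivial $G$-equivariant automorphisms. The step I expect to need the most care is the first -- reconciling the `same elliptic elements' hypothesis here with the `same elliptic subgroups' hypothesis of \cite[Theorem~1.1]{forester}, and more generally matching the non-degeneracy conventions on $G$-trees implicit in \cite{forester} with the hypotheses above -- since beyond this bookkeeping the argument is entirely contained in \cite{forester}.
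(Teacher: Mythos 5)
Your proposal is correct and takes the same route as the paper, which gives no proof but simply states that the result ``can be extracted'' from the implication (d)$\,\Rightarrow\,$(a) of Forester's Theorem~1.1 and his Theorem~1.2; your write-up is precisely the extraction the paper leaves implicit, and it correctly flags the two bookkeeping points that need care — translating ``same elliptic elements'' into Forester's ``same elliptic subgroups'' via Serre's criterion for finitely generated subgroups, and observing that hypothesis~(i) together with minimality forces $T$ to be reduced while hypothesis~(ii) is exactly reducedness of $\1T$.
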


We can use Lemma~\ref{lem:large} and Theorem~\ref{thm:forester} to show the following result, which proves the `only if' direction of Theorem~\ref{thm:main} in the case when $G(A,L)$ is not metabelian.

\begin{prop} \label{prop:main-large}
Let $n,\1n \geq 0$, let $A \in GL_n(\Q)$, $\1A \in GL_{\1n}(\Q)$, and let $L \leq \Z^n \cap A^{-1}(\Z^n)$, $\1L \leq \Z^{\1n} \cap \1A^{-1}(\Z^{\1n})$ be finite index subgroups. Suppose that $G(A,L) \cong G(\1A,\1L)$, and that $L \neq \Z^n \neq AL$ and $\1L \neq \Z^{\1n} \neq \1A\1L$. Then $n = \1n$, and there exists $B \in GL_n(\Z)$ such that either $\1A = BAB^{-1}$ and $\1L = BL$, or $\1A = BA^{-1}B^{-1}$ and $\1L = BAL$.
\end{prop}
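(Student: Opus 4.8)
The plan is to transfer the whole situation to the Bass--Serre trees and apply Forester's rigidity theorem (Theorem~\ref{thm:forester}). Fix an isomorphism $\Phi\colon G := G(A,L) \to \1G := G(\1A,\1L)$; I will use it to regard $\1G$ as acting on both $T := T(A,L)$ (via $\Phi$) and $\1T := T(\1A,\1L)$. The first task is to check the hypotheses of Theorem~\ref{thm:forester} for these two actions of $\1G$. Cocompactness and absence of edge inversions are built into the HNN-structure. Hypothesis~\ref{it:forester-reduced} for $\1T$ is exactly Lemma~\ref{lem:stabilisers}\ref{it:stab-reduced} together with the assumption $\1L \neq \Z^{\1n} \neq \1A\1L$. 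Hypothesis~\ref{it:forester-slidefree} for $T$ does \emph{not} come for free: Lemma~\ref{lem:stabilisers}\ref{it:stab-slidefree} only gives it when $L \nsubseteq AL$ and $AL \nsubseteq L$. In the remaining case, say $AL \subseteq L$ (the case $L \subseteq AL$ being symmetric after replacing the HNN-presentation's stable letter $t$ by $t^{-1}$, which swaps the roles of $L$ and $AL$ but does not change the group or its tree), the subgroup $H_1$ of Lemma~\ref{lem:large}\ref{it:large-H1} equals $\{\mathbf{x}^{\mathbf{v}} \mid \mathbf{v} \in L\}$; then I can pass to the (unique up to $G$-conjugacy) reduced splitting obtained by collapsing, and argue that after this reduction the slide-freeness holds, or alternatively observe that $G$ already acts on $T$ with the required property because the only failure of~\ref{it:forester-slidefree} would force a nontrivial fold. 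I should state this reduction cleanly; it is the first place requiring care.

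Next I need the crucial input: the set of elliptic elements of $\1G$ for the action on $T$ (pulled back along $\Phi$) coincides with the set of elliptic elements for the action on $\1T$. This is exactly where Proposition~\ref{prop:elliptic} does the work. Since $L \neq \Z^n \neq AL$ (so in particular either $L \neq \Z^n$ or $AL \neq \Z^n$) and likewise for $\1A, \1L$, Proposition~\ref{prop:elliptic} characterises ellipticity of $g \in G$ purely algebraically: $g$ is elliptic iff $\langle g \rangle \cap C_G(h) \neq \{1\}$ for every $h \in G'$. This characterisation is manifestly invariant under the isomorphism $\Phi$ (it refers only to the abstract group structure: cyclic subgroups, centralisers, the derived subgroup), so $g$ is elliptic for the $T$-action iff $\Phi(g)$ is elliptic for the $\1T$-action. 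Hence the elliptic sets match, and Theorem~\ref{thm:forester} produces a $\1G$-equivariant isomorphism $\xi\colon T \to \1T$ (more precisely, an isomorphism of $T$ and $\1T$ that intertwines the $G$-action through $\Phi$).

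From $\xi$ I extract the matrix $B$. Equivariance means $\Phi$ carries the vertex stabiliser of the base vertex $a \in V(T)$, namely $H = \langle x_1,\ldots,x_n\rangle \cong \Z^n$, isomorphically onto the stabiliser of $\xi(a) \in V(\1T)$, which is a conjugate of $\1H := \langle \1x_1,\ldots,\1x_{\1n}\rangle \cong \Z^{\1n}$; comparing ranks gives $n = \1n$, and after post-composing $\Phi$ with an inner automorphism of $\1G$ I may assume $\Phi(H) = \1H$. Then $\Phi|_H\colon \Z^n \to \Z^n$ is given by a matrix $B \in GL_n(\Z)$. Now I track the edges at $a$: Lemma~\ref{lem:large}\ref{it:large-H1} (or directly the structure of $\mathcal{E}_1, \mathcal{E}_2$ from Section~\ref{ssec:prelim-trees}) identifies the edge stabilisers at $a$ as $\{\mathbf{x}^{\mathbf{v}} \mid \mathbf{v} \in L\}$ and $\{\mathbf{x}^{\mathbf{v}} \mid \mathbf{v} \in AL\}$; equivariance of $\xi$ forces $\Phi$ to send $\{$the edge-stabilisers at $a\}$ bijectively to $\{$the edge-stabilisers at $\xi(a) = \1H\}$, i.e.\ $\{BL, BAL\} = \{\1L, \1A\1L\}$ as subgroups of $\Z^n$. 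Finally, to pin down which of the two matchings occurs and to deduce the conjugacy relation between $\1A$ and $A^{\pm 1}$, I use Lemma~\ref{lem:large}\ref{it:large-psi}: for $\mathbf{v}$ in a suitable finite-index subgroup $m\Z^n$, conjugation by the stable letter $t$ (resp.\ $\1t$) acts on $\mathbf{x}^{\mathbf{v}}$ by the matrix $A$ (resp.\ $\1A$), and $\Phi(t) = g\1t^{\pm 1}$ for some $g \in \1G$ with $g$ centralising a deep enough power lattice, up to sign of $\psi$. Chasing this through: if the edge-matching is $BL = \1L$, $BAL = \1A\1L$, then $\Phi(t)\mathbf{x}^{B\mathbf{v}}\Phi(t)^{-1} = \mathbf{x}^{BA\mathbf{v}}$ must equal $\mathbf{x}^{\1A^{\varepsilon}B\mathbf{v}}$ (for the appropriate sign $\varepsilon = \pm1$ of $\psi(\Phi(t))$) on a finite-index sublattice, giving $BA = \1AB$, i.e.\ $\1A = BAB^{-1}$; if instead $BL = \1A\1L$ and $BAL = \1L$, the same computation with $t^{-1}$ yields $\1A = BA^{-1}B^{-1}$ with $\1L = BAL$. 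Reconciling the edge-matching with the sign $\varepsilon$ so that the two cases line up exactly as in the statement is the final bookkeeping step. The main obstacle I anticipate is handling the case $AL \subseteq L$ or $L \subseteq AL$ so that Forester's slide-freeness hypothesis~\ref{it:forester-slidefree} genuinely applies; everything after $\xi$ is obtained is relatively mechanical, but that reduction must be done carefully (or else one must verify directly that $G$ acts on $T$ with no legal slides even in those cases).
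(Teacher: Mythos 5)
Your plan coincides with the paper's in the slide-free case $L \nsubseteq AL$ and $AL \nsubseteq L$, but there is a genuine gap in the other cases, and your ordering also hides a redundancy worth noting.

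The central problem is that Lemma~\ref{lem:stabilisers}\ref{it:stab-slidefree} is an \emph{if and only if}: hypothesis~\ref{it:forester-slidefree} of Theorem~\ref{thm:forester} holds for $T(A,L)$ exactly when $L \nsubseteq AL$ and $AL \nsubseteq L$. When $AL \subseteq L$ (or symmetrically $L \subseteq AL$) there is a legal slide move at every vertex, and Forester's theorem simply does not apply: there need not be a $G$-equivariant isomorphism $T \to \1T$, and you cannot build $\xi$. Neither of your two proposed workarounds fixes this. ``Collapsing to the reduced splitting'' does nothing, because the splitting is already reduced --- the assumption $L \neq \Z^n \neq AL$ forces the edge groups to be proper in the vertex groups, which is the definition of reduced here. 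And ``the only failure would force a nontrivial fold'' is not an argument; the point of the slide-freeness hypothesis in Forester's theorem is precisely to rule out the ambiguity created by slides, not folds, and that ambiguity is real in these cases. The paper sidesteps the tree isomorphism entirely there: using $\Phi(H_1) = \1H_1$ from Lemma~\ref{lem:large}\ref{it:large-H1}, it distinguishes $AL = L$ (where $H_1 \subseteq Z(K)$, Lemma~\ref{lem:large}\ref{it:large-H1c}) from $AL \subsetneq L$ or $L \subsetneq AL$ (where $H_1$ is a subgroup not in $Z(K)$), and then pins down the sign $\varepsilon$ by comparing $\lvert\det A\rvert$ and $\lvert\det \1A\rvert$ against the inclusions; after that, $\1L = BL$ or $\1L = BAL$ drops out of $\1A\1L \cup \1L = B(AL \cup L)$ by a short computation. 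You would need to supply an argument of this flavour for those cases; the Forester route does not cover them.

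Two smaller points on your ordering. First, you obtain $\Phi(H) = \1H$ (hence $n = \1n$ and $B \in GL_n(\Z)$) from the equivariance of $\xi$, but $\xi$ may not exist; you should instead get it directly from Lemma~\ref{lem:large}\ref{it:large-H}, which identifies $H$ and $\1H$ as the unique (up to conjugacy) maximal elliptic subgroups, using Proposition~\ref{prop:elliptic} to see that $\Phi$ preserves ellipticity. Second, the relation $\1A = BA^\varepsilon B^{-1}$ likewise does not need $\xi$: it follows from Lemma~\ref{lem:large}\ref{it:large-psi} and the observation that $\1\psi \circ \Phi$ and $\psi$ are both surjections $G \to \Z$ with kernel the normal closure of $H$, hence differ by $\pm 1$. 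Extracting these two facts before any case analysis makes the remaining casework cleaner, and in particular makes the determinant comparison available in the cases where Forester is unavailable.
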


\begin{proof}
As in \eqref{eq:pres}, we fix presentations
\begin{align*}
G(A,L) &= \langle x_1,\ldots,x_n,t \mid [x_i,x_j]=1 \text{ for } 1 \leq i < j \leq n, t\mathbf{x}^{\mathbf{v}}t^{-1} = \mathbf{x}^{A\mathbf{v}} \text{ for } \mathbf{v} \in L \rangle
\shortintertext{and}
G(\1A,\1L) &= \langle \1x_1,\ldots,\1x_{\1n},\1t \mid [\1x_i,\1x_j]=1 \text{ for } 1 \leq i < j \leq \1n, \1t\1{\mathbf{x}}^{\1{\mathbf{v}}}\1t^{-1} = \1{\mathbf{x}}^{\1A\1{\mathbf{v}}} \text{ for } \1{\mathbf{v}} \in \1L \rangle,
\end{align*}
where we write $\mathbf{x}^{\mathbf{w}} := x_1^{w_1} \cdots x_n^{w_n}$ for $\mathbf{w} = (w_1,\ldots,w_n) \in \Z^n$ and $\1{\mathbf{x}}^{\1{\mathbf{w}}} := \1x_1^{\1w_1} \cdots \1x_{\1n}^{\1w_{\1n}}$ for $\1{\mathbf{w}} = (\1w_1,\ldots,\1w_{\1n}) \in \Z^{\1n}$. We also fix subgroups $H = \langle x_1,\ldots,x_n \rangle < G(A,L)$ and $\1H = \langle \1x_1, \ldots, \1x_{\1n} \rangle < G(\1A,\1L)$, and an isomorphism $\Phi\colon G(A,L) \to G(\1A,\1L)$.

By Proposition~\ref{prop:elliptic}, the set of elliptic elements in $G(A,L)$ with respect to the action on $T(A,L)$ has a purely algebraic characterisation. In particular, if $g \in G(A,L)$ is an elliptic (respectively hyperbolic) element with respect to $G(A,L) \curvearrowright T(A,L)$, then $\Phi(g) \in G(\1A,\1L)$ must be an elliptic (respectively hyperbolic) element with respect to $G(\1A,\1L) \curvearrowright T(\1A,\1L)$. Since finitely generated elliptic subgroups (of a group acting on a tree) are precisely the finitely generated subgroups consisting of elliptic elements \cite[Corollary~3 on p.~65]{serre}, it follows that a given finitely generated subgroup $\widetilde{H} < G(A,L)$ is elliptic with respect to $G(A,L) \curvearrowright T(A,L)$ if and only if $\Phi(\widetilde{H}) < G(\1A,\1L)$ is elliptic with respect to $G(\1A,\1L) \curvearrowright T(\1A,\1L)$. Now by Lemma~\ref{lem:large}\ref{it:large-H}, $H$ and $\1H$ are maximal elliptic subgroups of $G(A,L)$ and $G(\1A,\1L)$, respectively. It follows that $\Phi(H)$ and $\1H$ are maximal elliptic subgroups of $G(\1A,\1L)$ with respect to the action on $T(\1A,\1L)$, and so, again by Lemma~\ref{lem:large}\ref{it:large-H}, $\Phi(H)$ and $\1H$ are conjugate in $G(\1A,\1L)$.

By composing an inner automorphism of $G(\1A,\1L)$ with $\Phi$, we may therefore assume that $\Phi(H) = \1H$. In particular, this implies that $n = \1n$, and that there exists matrix $B \in GL_n(\Z)$ such that $\Phi(\mathbf{x}^{\mathbf{v}}) = \1{\mathbf{x}}^{B\mathbf{v}}$ for all $\mathbf{v} \in \Z^n$.

Now let $K$ be the normal closure of $H$ in $G(A,L)$, so that $\1K := \Phi(K)$ is the normal closure of $\1H$ in $G(\1A,\1L)$. By Lemma~\ref{lem:large}\ref{it:large-psi}, there exist surjections $\psi\colon G(A,L) \to \Z$ and $\1\psi\colon G(\1A,\1L) \to \Z$ with kernels $K$ and $\1K$, respectively, such that for any $g \in G(A,L)$ there exist constants $m,\1m \geq 1$ such that $g\mathbf{x}^{\mathbf{v}}g^{-1} = \mathbf{x}^{A^{\psi(g)}\mathbf{v}}$ for all $\mathbf{v} \in m\Z^n$ and $\Phi(g)\1{\mathbf{x}}^{\1{\mathbf{v}}}\Phi(g)^{-1} = \1{\mathbf{x}}^{\1A^{\1\psi(\Phi(g))}\1{\mathbf{v}}}$ for all $\1{\mathbf{v}} \in \1m\Z^n$. Since $\1\psi \circ \Phi$ and $\psi$ are both surjections $G(A,L) \to \Z$ with kernel $K$, there exists $\varepsilon \in \{ \pm 1 \}$ such that $\1\psi(\Phi(g)) = \varepsilon \psi(g)$ for all $g \in G(A,L)$.

By taking $M = \lcm(m,\1m)$, it follows that, for any $g \in G(A,L)$ and $\mathbf{v} \in M\Z^n$,
\[
\1{\mathbf{x}}^{BA^{\psi(g)}\mathbf{v}} = \Phi(\mathbf{x}^{A^{\psi(g)}\mathbf{v}}) = \Phi(g\mathbf{x}^{\mathbf{v}}g^{-1}) = \Phi(g)\1{\mathbf{x}}^{B\mathbf{v}}\Phi(g)^{-1} = \1{\mathbf{x}}^{\1A^{\1\psi(\Phi(g))}B\mathbf{v}} = \1{\mathbf{x}}^{\1A^{\varepsilon\psi(g)}B\mathbf{v}}.
\]
Therefore, $\1A^{\varepsilon\psi(g)}B = BA^{\psi(g)}$ for all $g \in G(A,L)$, and so $\1A = B A^\varepsilon B^{-1}$ since the map $\psi\colon G(A,L) \to \Z$ is surjective.

Let $H_1 = \{ g \in H \mid C_{G(A,L)}(g) \neq H \}$ and let $\1H_1 = \{ g \in \1H \mid C_{G(\1A,\1L)}(g) \neq \1H \}$. It then follows that $\Phi(H_1) = \1H_1$ and, by Lemma~\ref{lem:large}\ref{it:large-H1}, we have $H_1 = \{ \mathbf{x}^{\mathbf{v}} \mid \mathbf{v} \in AL \cup L \}$ and $\1H_1 = \{ \1{\mathbf{x}}^{\1{\mathbf{v}}} \mid \1{\mathbf{v}} \in \1A\1L \cup \1L \}$. Since $\Phi(\mathbf{x}^{\mathbf{v}}) = \1{\mathbf{x}}^{B\mathbf{v}}$ for all $\mathbf{v} \in \Z^n$, it follows that $\1A\1L \cup \1L = B(AL \cup L)$. We now consider three cases:

\begin{description}

\item[If $L \nsubseteq AL$ and $AL \nsubseteq L$] Let $G = G(A,L)$, $T = T(A,L)$ and $\1T = T(\1A,\1L)$, and note that $\1T$ can be seen as a $G$-tree via the isomorphism $\Phi$. Then, by Lemma~\ref{lem:stabilisers}\ref{it:stab-slidefree}, the assumption that $L \nsubseteq AL$ and $AL \nsubseteq L$ implies that the condition~\ref{it:forester-slidefree} in Theorem~\ref{thm:forester} is satisfied. Moreover, since $\1L \neq \Z^n \neq \1A\1L$, it follows from Lemma~\ref{lem:stabilisers}\ref{it:stab-reduced} that the condition~\ref{it:forester-reduced} in Theorem~\ref{thm:forester} is satisfied. Therefore, by Theorem~\ref{thm:forester} there exists an isomorphism $\xi\colon T \to \1T$ such that $\xi(g \cdot y) = \Phi(g) \cdot \xi(y)$ for all $g \in G$ and $y \in T$.

Now let $y_0 \in V(T)$ be the vertex with $\Stab_G(y_0) = H$. Then the vertex $\Phi(t) \cdot \xi(y_0) = \xi(t \cdot y_0)$ is adjacent to $\xi(y_0)$ in $\1T$ since $t \cdot y_0$ is adjacent to $y_0$ in $T$. Since $\Stab_{G(\1A,\1L)}(\xi(y_0)) = \Phi(H) = \1H$, this implies that $\Phi(t) = \1{\mathbf{x}}^{\mathbf{v}} \1t^\delta \1{\mathbf{x}}^{\mathbf{w}}$ for some $\mathbf{v},\mathbf{w} \in \Z^n$ and $\delta \in \{ \pm 1 \}$; moreover, we have
\[
\varepsilon = \varepsilon\psi(t) = \1\psi(\Phi(t)) = \1\psi(\1{\mathbf{x}}^{\mathbf{v}} \1t^\delta \1{\mathbf{x}}^{\mathbf{w}}) = \delta\1\psi(\1t) = \delta,
\]
where $\varepsilon \in \{ \pm 1 \}$ is as above, and so $\Phi(t) = \1{\mathbf{x}}^{\mathbf{v}} \1t^\varepsilon \1{\mathbf{x}}^{\mathbf{w}}$.

Suppose first that $\varepsilon = 1$. Then $\1A = BAB^{-1}$ as shown above. Moreover, let $\mathbf{u} \in \Z^n$. Then we have $\mathbf{u} \in L$ if and only if $t \mathbf{x}^{\mathbf{u}} t^{-1} \in H$ (by Proposition~\ref{prop:britton}), if and only if $t \mathbf{x}^{\mathbf{u}} t^{-1} \cdot y_0 = y_0$, if and only if $\Phi(t\mathbf{x}^{\mathbf{u}}t^{-1}) \cdot \xi(y_0) = \xi(t \mathbf{x}^{\mathbf{u}} t^{-1} \cdot y_0) = \xi(y_0)$, if and only if $\Phi(t\mathbf{x}^{\mathbf{u}}t^{-1}) \in \1H$. But we can calculate that $\Phi(t\mathbf{x}^{\mathbf{u}}t^{-1}) = (\1{\mathbf{x}}^{\mathbf{v}} \1t \1{\mathbf{x}}^{\mathbf{w}}) \1{\mathbf{x}}^{B\mathbf{u}} (\1{\mathbf{x}}^{\mathbf{v}} \1t \1{\mathbf{x}}^{\mathbf{w}})^{-1} = \1{\mathbf{x}}^{\mathbf{v}} \1t \1{\mathbf{x}}^{B\mathbf{u}} \1t^{-1} \1{\mathbf{x}}^{-\mathbf{v}}$, which is in $\1H$ if and only if $B\mathbf{u} \in \1L$ (by Proposition~\ref{prop:britton}). Thus $\mathbf{u} \in L$ if and only if $B\mathbf{u} \in \1L$, and so $\1L = BL$, as required.

Suppose now that $\varepsilon = -1$. Then $\1A = BA^{-1}B^{-1}$ and, similarly to the previous case, for a given $\mathbf{u} \in \Z^n$ we have $\mathbf{u} \in AL$ if and only if $\Phi(t^{-1}\mathbf{x}^{\mathbf{u}}t) \in \1H$. But we can calculate that $\Phi(t^{-1}\mathbf{x}^{\mathbf{u}}t) = \1{\mathbf{x}}^{-\mathbf{w}} \1t \1{\mathbf{x}}^{B\mathbf{u}} \1t^{-1} \1{\mathbf{x}}^{\mathbf{w}}$, which is in $\1H$ if and only if $B\mathbf{u} \in \1L$ (again by Proposition~\ref{prop:britton}). Thus $\mathbf{u} \in AL$ if and only if $B\mathbf{u} \in \1L$, and so $\1L = BAL$, as required.

\item[If $AL = L$] Then we have $H_1 \subseteq Z(K)$ by Lemma~\ref{lem:large}\ref{it:large-H1c}. Since $\Phi$ is an isomorphism, this implies that $\1H_1 = \Phi(H_1) \subseteq Z(\Phi(K)) = Z(\1K)$, and so we have $\1A \1L = \1L$ by Lemma~\ref{lem:large}\ref{it:large-H1c}. In particular, we have $\1L = \1A\1L \cup \1L = B(AL \cup L) = BL = BAL$; combined with the equality $\1A = BA^\varepsilon B^{-1}$, this gives the result.

\item[If $AL \subsetneq L$ or $L \subsetneq AL$] Then by parts \ref{it:large-H1} and \ref{it:large-H1c} of Lemma~\ref{lem:large}, $H_1$ is a group not contained in $Z(K)$. Since $\1H_1 = \Phi(H_1)$ and $\1K = \Phi(K)$, it follows that $\1H_1$ is a group not contained in $Z(\1K)$, and so (again by parts \ref{it:large-H1} and \ref{it:large-H1c} of Lemma~\ref{lem:large}) we have either $\1A\1L \subsetneq \1L$ or $\1L \subsetneq \1A\1L$.

Now note that, since $L$ and $AL$ have finite index in $\Z^n$, we have $\lvert\det(A)\rvert = [L:AL] > 1$ if $AL \subsetneq L$, and $\lvert\det(A)\rvert = \frac{1}{[AL:L]} < 1$ if $L \subsetneq AL$. Similarly, we have $\lvert\det(\1A)\rvert > 1$ if $\1A\1L \subsetneq \1L$, and $\lvert\det(\1A)\rvert < 1$ if $\1L \subsetneq \1A\1L$. But we know that $\lvert\det(\1A)\rvert = \lvert\det(BA^\varepsilon B^{-1})\rvert = \lvert\det(A)\rvert^\varepsilon$; therefore, we have $\varepsilon = 1$ if and only if either $AL \subsetneq L$ and $\1A\1L \subsetneq \1L$, or $L \subsetneq AL$ and $\1L \subsetneq \1A\1L$.

Suppose first that $\varepsilon = 1$, and so $\1A = BAB^{-1}$. If $AL \subsetneq L$ and $\1A\1L \subsetneq \1L$, then we have $\1L = \1A\1L \cup \1L = B(AL \cup L) = BL$, as required. Otherwise, we have $L \subsetneq AL$ and $\1L \subsetneq \1A\1L$, implying that $BAB^{-1}\1L = \1A\1L = \1A\1L \cup \1L = B(AL \cup L) = BAL$, and so $\1L = (BAB^{-1})^{-1} BAL = BL$, as required.

Suppose now that $\varepsilon = -1$, and so $\1A = BA^{-1}B^{-1}$. If $L \subsetneq AL$ and $\1A\1L \subsetneq \1L$, then we have $\1L = \1A\1L \cup \1L = B(AL \cup L) = BAL$, as required. Otherwise, we have $AL \subsetneq L$ and $\1L \subsetneq \1A\1L$, implying that $BA^{-1}B^{-1}\1L = \1A\1L = \1A\1L \cup \1L = B(AL \cup L) = BL$, and so $\1L = (BA^{-1}B^{-1})^{-1} BL = BAL$, as required. \qedhere

\end{description}

\end{proof}

\section{Non-polycyclic metabelian groups} \label{sec:metab}

Here we consider isomorphisms between metabelian Leary--Minasyan groups that are not polycyclic. We first describe the the subgroup $K$ and the map $K \to K, g \mapsto tgt^{-1}$ appearing in Lemma~\ref{lem:ascHNN} in the case $H \cong \Z^n$. This will allow us to classify metabelian Leary--Minasyan groups up to isomorphism.

\begin{lem} \label{lem:metab-iso-new}
Let $n \geq 0$ and let $A \in GL_n(\Q)$ be a matrix with integer entries. Let $G = G(A,\Z^n)$ be an ascending HNN-extension of $H \cong \Z^n$, and let $K$ and $t$ be as in Lemma~\ref{lem:ascHNN}. Then there is an injective group homomorphism $\phi\colon K \to \Q^n$ such that $\phi(K) = \bigcup_{j \in \Z} A^j(\Z^n)$ and such that $\phi(tgt^{-1}) = A\phi(g)$ for all $g \in K$.
\end{lem}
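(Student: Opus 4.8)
The plan is to realise $K$ as an ascending union of copies of $\Z^n$ glued together by multiplication by $A$, and to map this union compatibly into $\Q^n$.

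First I would set up the filtration of $K$. Since $G = G(A,\Z^n)$ is the ascending HNN-extension of $H = \langle x_1,\dots,x_n\rangle\cong\Z^n$ with respect to $\theta\colon\mathbf{v}\mapsto A\mathbf{v}$, the chain displayed just before Lemma~\ref{lem:ascHNN} shows that $K = \bigcup_{j\geq 0}K_j$, where $K_j := t^{-j}Ht^j$ and $K_0\leq K_1\leq\cdots$. Because $H$ embeds in $G$ by Britton's Lemma (Proposition~\ref{prop:britton}) and conjugation by $t^j$ is injective, every element of $K_j$ equals $t^{-j}\mathbf{x}^{\mathbf{v}}t^j$ for a unique $\mathbf{v}\in\Z^n$; I would use this to \emph{define} $\phi_j\colon K_j\to\Q^n$ by $t^{-j}\mathbf{x}^{\mathbf{v}}t^j\mapsto A^{-j}\mathbf{v}$. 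It is then routine to check that $\phi_j$ is an injective homomorphism with image $A^{-j}(\Z^n)$, and --- using the relation $t\mathbf{x}^{\mathbf{v}}t^{-1} = \mathbf{x}^{A\mathbf{v}}$, which rewrites $t^{-j}\mathbf{x}^{\mathbf{v}}t^j$ as $t^{-(j+1)}\mathbf{x}^{A\mathbf{v}}t^{j+1}$ --- that $\phi_{j+1}$ restricts to $\phi_j$ on $K_j$.

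Next I would glue the $\phi_j$ into a single map $\phi\colon K\to\Q^n$, which is then automatically an injective homomorphism. Its image is $\bigcup_{j\geq 0}A^{-j}(\Z^n)$; since $A$ has integer entries we have $A^j(\Z^n)\subseteq\Z^n$ for every $j\geq 0$, so this set equals $\bigcup_{j\in\Z}A^j(\Z^n)$, as wanted. For the conjugation identity, given $g = t^{-j}\mathbf{x}^{\mathbf{v}}t^j\in K_j$ I would note that $tgt^{-1} = t^{-(j-1)}\mathbf{x}^{\mathbf{v}}t^{j-1}$ lies in $K_{\max(j-1,0)}$ and compute $\phi(tgt^{-1}) = A^{-(j-1)}\mathbf{v} = A\phi(g)$, the case $j=0$ being handled by the relation (so that $tgt^{-1} = \mathbf{x}^{A\mathbf{v}}$ and $\phi(tgt^{-1}) = A\mathbf{v} = A\phi(g)$).

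I do not expect a genuine obstacle here: the content of the lemma is essentially that $K$ is the direct limit $\varinjlim(\Z^n\xrightarrow{A}\Z^n\xrightarrow{A}\cdots)$ and that $\Q^n$ receives this limit via the maps $\mathbf{v}\mapsto A^{-j}\mathbf{v}$ on the $j$-th term. The only points requiring care are the well-definedness of the $\phi_j$ --- which is exactly where the embedding $H\hookrightarrow G$ from Britton's Lemma is used --- and the reindexing identity $\bigcup_{j\geq 0}A^{-j}(\Z^n) = \bigcup_{j\in\Z}A^j(\Z^n)$, which relies on $A$ being integral.
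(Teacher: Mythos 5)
Your proof is correct and takes essentially the same approach as the paper's: both define $\phi$ by sending a representative $t^{-j}\mathbf{x}^{\mathbf{v}}t^{j}$ (equivalently $t^{j}\mathbf{x}^{\mathbf{v}}t^{-j}$ in the paper's indexing) to $A^{-j}\mathbf{v}$ (respectively $A^{j}\mathbf{v}$) and verify directly that this is an injective homomorphism with image $\bigcup_{j\in\Z}A^{j}(\Z^n)$ intertwining conjugation by $t$ with multiplication by $A$. Your direct-limit packaging via the filtration $K=\bigcup_{j\geq 0}t^{-j}Ht^{j}$ is a tidy way to organise the well-definedness check that the paper disposes of as a ``direct computation''.
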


\begin{proof}
By Lemma~\ref{lem:ascHNN}, $K$ is precisely the set of elements of $G$ of the form $t^j \mathbf{x}^{\mathbf{v}} t^{-j}$ for some $j \in \Z$ and $\mathbf{v} \in \Z^n$. We then define $\phi\colon K \to \Q^n$ by setting $\phi(t^j \mathbf{x}^{\mathbf{v}} t^{-j}) = A^j \mathbf{v}$. A direct computation shows that $\phi(g)$ is independent of the choice of a word $t^j \mathbf{x}^{\mathbf{v}} t^{-j}$ representing $g$, and that $\phi(gh) = \phi(g)+\phi(h)$ for all $g,h \in K$; thus $\phi$ is a well-defined group homomorphism. It is clear from the construction that $\phi(K) = \bigcup_{j \in \Z} A^j(\Z^n)$. To show that $\phi$ is injective, note that if $\phi(g) = \mathbf{0}$ for some $g = t^j \mathbf{x}^{\mathbf{v}} t^{-j} \in K$ then we have $\mathbf{v} = A^{-j} \mathbf{0} = \mathbf{0}$ and so $g = t^j \mathbf{x}^{\mathbf{0}} t^{-j} = t^j t^{-j} = 1$. Finally, given any $g = t^j \mathbf{x}^{\mathbf{v}} t^{-j} \in K$ we have
\[
\phi(tgt^{-1}) = \phi(t^{j+1} \mathbf{x}^{\mathbf{v}} t^{-(j+1)}) = A^{j+1} \mathbf{v} = A (A^j \mathbf{v}) = A\phi(g),
\]
as required.
\end{proof}

The following result proves the `only if' direction of Theorem~\ref{thm:main} in the case when $G(A,L)$ is metabelian and not polycyclic. By Lemma~\ref{lem:polycyclic}, given a matrix $A \in GL_n(\Q)$ with integer entries, the metabelian group $G(A,\Z^n)$ is polycyclic if and only if $A \in GL_n(\Z)$.

\begin{prop} \label{prop:main-metab}
Let $n,\1n \geq 0$, and let $A \in GL_n(\Q) \setminus GL_n(\Z)$ and $\1A \in GL_{\1n}(\Q) \setminus GL_{\1n}(\Z)$ be matrices with integer entries. Suppose that $G(A,\Z^n) \cong G(\1A,\Z^{\1n})$. Then $n = \1n$, and there exists $B \in GL_n(\Q)$ such that $\1A = BAB^{-1}$ and $\bigcup_{j \in \Z} A^j(\Z^n) = \bigcup_{j \in \Z} A^jB^{-1}(\Z^n)$.
\end{prop}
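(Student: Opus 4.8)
The plan is to reduce the statement to linear algebra over $\Q$ by recognising the distinguished normal subgroup $K$ of $G(A,\Z^n)$ intrinsically. Write $G = G(A,\Z^n)$ and $\1G = G(\1A,\Z^{\1n})$; by Lemma~\ref{lem:polycyclic} both are metabelian, and Lemma~\ref{lem:ascHNN} gives $G = K \rtimes \langle t \rangle$ and $\1G = \1K \rtimes \langle \1t \rangle$, where $K = \bigcup_{j} t^j H t^{-j}$ for $H = \langle x_1,\ldots,x_n \rangle$ (and similarly $\1K$). The first step is to show that any isomorphism $\Psi\colon G \to \1G$ satisfies $\Psi(K) = \1K$. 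The key observation is that $K$ is exactly the set of elliptic elements for the action of $G$ on $T(A,\Z^n)$: since $K$ is abelian and contains $H$, every conjugate of $H$ in $G$ has the form $t^j H t^{-j}$, so the union of all vertex stabilisers is $\bigcup_{j} t^j H t^{-j} = K$. As $A \notin GL_n(\Z)$ (with $A$ an integer matrix) forces $A\Z^n \neq \Z^n$, Proposition~\ref{prop:elliptic} applies and characterises elliptic elements by a formula that is preserved by $\Psi$ (and likewise for $\1G$); hence $\Psi$ maps the set of elliptic elements of $G$ onto that of $\1G$, i.e.\ $\Psi(K) = \1K$. Consequently $\Psi$ induces an isomorphism between $G/K \cong \Z$ and $\1G/\1K \cong \Z$, namely multiplication by some $\varepsilon \in \{\pm 1\}$, so $\Psi(t) = \1k_0 \1t^{\varepsilon}$ for some $\1k_0 \in \1K$.

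The second step passes to the abelianised picture via Lemma~\ref{lem:metab-iso-new}. Let $\phi\colon K \to M := \bigcup_{j} A^j(\Z^n) \leq \Q^n$ and $\1\phi\colon \1K \to \1M := \bigcup_{j} \1A^j(\Z^{\1n}) \leq \Q^{\1n}$ be the isomorphisms it provides, and set $\beta := \1\phi \circ (\Psi|_K) \circ \phi^{-1}\colon M \to \1M$, an isomorphism of abelian groups. Since $\1K$ is abelian, conjugation by $\Psi(t) = \1k_0 \1t^{\varepsilon}$ on $\1K$ agrees with conjugation by $\1t^{\varepsilon}$, which $\1\phi$ turns into multiplication by $\1A^{\varepsilon}$; combined with $\phi(tkt^{-1}) = A\phi(k)$ this yields the intertwining relation $\beta(A\mathbf{v}) = \1A^{\varepsilon}\beta(\mathbf{v})$ for all $\mathbf{v} \in M$. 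Because $\Z^n \leq M \leq \Q^n$ and $\Z^{\1n} \leq \1M \leq \Q^{\1n}$, the groups $M$ and $\1M$ are torsion-free of ranks $n$ and $\1n$, so the isomorphism $\beta$ already forces $n = \1n$. A routine argument -- restrict $\beta$ to $\Z^n$, extend $\Q$-linearly, clear denominators to check that the extension still restricts to $\beta$ on all of $M$, and note that $B(M) = \1M$ spans $\Q^n$ -- produces $B \in GL_n(\Q)$ with $B|_M = \beta$; the intertwining relation then reads $BA = \1A^{\varepsilon}B$ as matrices, i.e.\ $\1A = BA^{\varepsilon}B^{-1}$.

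The final step rules out $\varepsilon = -1$ and extracts the subgroup equality. If $\varepsilon = -1$ then $\det\1A = (\det A)^{-1}$, which is impossible since $A$ and $\1A$ are integer matrices not in $GL_n(\Z)$, so $|\det A|, |\det\1A| \geq 2$. Hence $\varepsilon = 1$ and $\1A = BAB^{-1}$, whence $B^{-1}\1A^j B = A^j$ and therefore $\bigcup_{j} A^j(\Z^n) = M = B^{-1}(\1M) = \bigcup_{j} B^{-1}\1A^j(\Z^n) = \bigcup_{j} A^j B^{-1}(\Z^n)$, as required. The step that genuinely needs care is the first one -- verifying that $K$ is intrinsically recognisable (as the set of elliptic elements, via Proposition~\ref{prop:elliptic}) so that $\Psi(K) = \1K$; everything after that is bookkeeping in $\Q^n$, with the pleasant feature that the wrong sign $\varepsilon = -1$ is excluded on the nose by a determinant count rather than by having to exhibit an automorphism of $\1G$.
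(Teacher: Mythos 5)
Your proof is correct and takes essentially the same route as the paper: identify $K$ intrinsically as the set of elliptic elements via Proposition~\ref{prop:elliptic} (valid because the integer matrix $A\notin GL_n(\Z)$ forces $A\Z^n\neq\Z^n$), so $\Psi(K)=\1K$; transport via $\phi,\1\phi$ from Lemma~\ref{lem:metab-iso-new} to get $\beta\colon\phi(K)\to\1\phi(\1K)$ and the rational matrix $B$; derive $\1A=BA^{\varepsilon}B^{-1}$ from the intertwining relation; and kill $\varepsilon=-1$ by comparing $|\det A|,|\det\1A|\geq 2$. The paper passes to $\beta\otimes\Q$ to extend $\beta$ and obtain $n=\1n$, whereas you phrase this via rank of torsion-free abelian groups and checking that the $\Q$-linear extension of $\beta|_{\Z^n}$ agrees with $\beta$ on $M$; these are the same observation.
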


\begin{proof}
Let $G = G(A,L)$ be the HNN-extension of $H \cong \Z^n$ with stable letter $t$, and let $K = \bigcup_{j \in \Z} t^jHt^{-j} \lhd G$. Let $\phi\colon K \to \Q^n$ be the map given by Lemma~\ref{lem:metab-iso-new}. We set $\1G := G(\1A,\1L)$, and define the subgroups $\1H < \1K \lhd \1G$, the element $\1t \in \1G$ and the map $\1\phi\colon \1K \to \Q^{\1n}$ in an analogous way.

Let $\Phi\colon G \to \1G$ be an isomorphism. Since $A \notin GL_n(\Z)$, the group $G$ (and so $\1G$) is not polycyclic by Lemma~\ref{lem:polycyclic}, and so Proposition~\ref{prop:elliptic} implies that $\Phi$ sends elliptic elements of $G$ (with respect to the action on $T(A,L)$) to elliptic elements of $\1G$ (with respect to the action on $T(\1A,\1L)$). But as $K$ (respectively $\1K$) is a normal subgroup consisting of conjugates of elements in $H$ (respectively $\1H$), it follows that $K$ (respectively $\1K$) is precisely the set of elliptic elements with respect to $G \curvearrowright T(A,L)$ (respectively $\1G \curvearrowright T(\1A,\1L)$). Thus $\Phi(K) = \overline{K}$.

Now consider the map $\beta = \1\phi \circ \Phi \circ \phi^{-1}\colon \phi(K) \to \1\phi(\1K)$, which is a well-defined isomorphism since $\phi$ and $\1\phi$ are injective and since $\Phi|_K\colon K \to \1K$ is an isomorphism. Since $\Z^n \subseteq \phi(K) \subseteq \Q^n$ and $\Z^{\1n} \subseteq \1\phi(\1K) \subseteq \Q^{\1n}$, we have $\phi(K) \otimes \Q = \Q^n$ and $\1\phi(\1K) \otimes \Q = \Q^{\1n}$, implying that $\beta$ extends to an isomorphism $\beta \otimes \Q\colon \Q^n \to \Q^{\1n}$. In particular, $n = \1n$, and $\beta \otimes \Q$ is represented by a matrix $B \in GL_n(\Q)$ such that $\1\phi(\Phi(k)) = B\phi(k)$ for all $k \in K$.

Let $\1q\colon \1G \to \1G/\1K$ be the quotient map. Then the map $\1q \circ \Phi\colon G \to \1G/\1K = \langle \1K\1t \rangle \cong \Z$ is surjective since $\Phi$ and $\1q$ are surjective; therefore, since $G = \langle K,t \rangle$ and $\Phi(K) = \1K$, the cyclic group $\1G/\1K$ is generated by $\1q(\Phi(t))$. It follows that $\Phi(t) = \1g \1t^{\varepsilon}$ for some $\1g \in \1K$ and $\varepsilon \in \{ \pm 1 \}$. Now given $\1k \in \1K$, we may compute that
\begin{align*}
B^{-1}\1\phi(\1k) &= \phi(\Phi^{-1}(\1k)) = A^{-1} \phi(t \Phi^{-1}(\1k) t^{-1}) = A^{-1} \phi(\Phi^{-1}(\1g \cdot \1t^{\varepsilon} \1k \1t^{-\varepsilon} \cdot \1g^{-1})) \\ &= A^{-1} \phi(\Phi^{-1}(\1t^{\varepsilon} \1k \1t^{-\varepsilon})) = A^{-1}B^{-1} \1\phi(\1t^{\varepsilon} \1k \1t^{-\varepsilon}) = A^{-1}B^{-1}\1A^{\varepsilon} \1\phi(\1k).
\end{align*}
Since $\Z^n \subseteq \1\phi(\1K)$, this implies that $B^{-1}\mathbf{v} = A^{-1}B^{-1}\1A^{\varepsilon}\mathbf{v}$ for all $\mathbf{v} \in \Z^n$, and so $B^{-1} = A^{-1}B^{-1}\1A^\varepsilon$, i.~e.\ $\1A = B A^\varepsilon B^{-1}$. Furthermore, as $A$ has integer entries and $A \notin GL_n(\Z)$ we have $\lvert\det(A)\rvert \geq 2$, and similarly $\lvert\det(\1A)\rvert \geq 2$; as $\lvert\det(\1A)\rvert = \lvert\det(A^\varepsilon)\rvert = \lvert\det(A)\rvert^\varepsilon$, it follows that $\varepsilon = 1$ and so $\1A = BAB^{-1}$.

Finally, since $\1\phi(\Phi(k)) = B\phi(k)$ for all $k \in K$ and since $\Phi(K) = \1K$, we have $\1\phi(\1K) = B\phi(K)$. It follows that
\[
\bigcup_{j \in \Z} A^j(\Z^n) = \phi(K) = B^{-1}\1\phi(\1K) = \bigcup_{j \in \Z} B^{-1}\1A^j(\Z^n) = \bigcup_{j \in \Z} A^jB^{-1}(\Z^n),
\]
as required.
\end{proof}

\begin{rmk} \label{rmk:metab}
To show that case~\ref{it:main-metab} in Theorem~\ref{thm:main} is not covered by case~\ref{it:main-gen}, consider the following example. Let $A = \begin{pmatrix} 0 & 1 \\ 8 & 0 \end{pmatrix} \in GL_2(\Q)$ and $\1A = \begin{pmatrix} 0 & 2 \\ 4 & 0 \end{pmatrix} \in GL_2(\Q)$. Then we have $\1A = BAB^{-1}$ and $\bigcup_{j \in \Z} A^j(\Z^2) = \Z[1/2]^2 = \bigcup_{j \in \Z} A^jB^{-1}(\Z^2)$ for $B = \begin{pmatrix} 2 & 0 \\ 0 & 1 \end{pmatrix} \in GL_2(\Q)$, implying that $G(A,\Z^2) \cong G(\1A,\Z^2)$. However, a straightforward direct computation shows that $\1A \neq BA^\varepsilon B^{-1}$ for any $B \in GL_2(\Z)$ and $\varepsilon \in \{ \pm 1 \}$.

To show that the equality $\bigcup_{j \in \Z} A^j(\Z^n) = \bigcup_{j \in \Z} A^jB^{-1}(\Z^n)$ in case~\ref{it:main-metab} cannot be dropped, consider the following. Let $A = \begin{pmatrix} 1 & 0 \\ 2 & 1 \end{pmatrix} \in GL_2(\Q)$ and $\1A = \begin{pmatrix} 1 & 0 \\ 1 & 1 \end{pmatrix} \in GL_2(\Q)$. We then have $\1A = BAB^{-1}$ for $B = \begin{pmatrix} 2 & 0 \\ 0 & 1 \end{pmatrix} \in GL_2(\Q)$; however, the groups $G(A,\Z^2)$ and $G(\1A,\Z^2)$ are not isomorphic since they have non-isomorphic abelianisations.
\end{rmk}

\section{Polycyclic groups} \label{sec:polyc}

In this section, we assume that $L = AL = \Z^n$, and consequently $A \in GL_n(\Z)$. It then follows from Lemma~\ref{lem:ascHNN} that $G(A,\Z^n) = H \rtimes \langle t \rangle$, where $H = \langle x_1,\ldots,x_n \rangle \cong \Z^n$, with the map $H \to H, h \mapsto tht^{-1}$ represented by the matrix $A$.

\subsection{Isomorphisms}

Here we give a criterion for two semidirect products of $H \cong \Z^n$ and $\langle t \rangle \cong \Z$ to be isomorphic.

Given two elements $x,y \in G$ of a group $G$, we write $[x,y] := xyx^{-1}y^{-1}$. More generally, given two subsets $S,T \subseteq G$ we write $[S,T]$ for $\{ [s,t] \mid s \in S, t \in T \} \subseteq G$, and we write $[g,T]$ for $[\{g\},T]$ (where $g \in G$ and $T \subseteq G$). This notation is slightly non-standard: if $H,K \leq G$ are subgroups, we only write $[H,K]$ for the \emph{subset} $\{ [h,k] \mid h \in H, k \in K \}$, not for the subgroup generated by it.

\begin{lem} \label{lem:2ways}
Let $G$ be a group with subgroups $H,\1H < G$, both isomorphic to $\Z^n$, and infinite order elements $t,\1t \in G$ such that $G = H \rtimes \langle t \rangle = \1H \rtimes \langle \1t \rangle$. Let $N = H\1H$ and $E = H \cap \1H$, and suppose that $[G:N] < \infty$. Then $G/E \cong \Z^2$, and $H = E \times \langle h \rangle$ for some $h \in H$. Moreover, for any such $h \in H$, there exists $\1h \in \1H$ such that $\1H = E \times \langle \1h \rangle$ and $[t,h] = [\1t,\1h]$.
\end{lem}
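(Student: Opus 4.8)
The plan is to analyse the two semidirect product decompositions simultaneously by using the finite-index normal subgroup $N = H\overline{H}$ and the common "edge group" $E = H \cap \overline{H}$, which is easily seen to be normal in $G$ (it is characteristic in both $H$ and in $\overline H$ after noting each is normalised by the relevant stable letter — more precisely $E$ is the kernel of a suitable map, as we extract below). First I would show $G/E \cong \Z^2$. Since $G = H \rtimes \langle t\rangle$ we have $G/H \cong \Z$, and $N/H \cong \overline H/E$ is a subgroup of $G/H \cong \Z$ of finite index (as $[G:N]<\infty$), hence $N/H \cong \Z$ and $\overline H / E \cong \Z$; symmetrically $H/E \cong \Z$. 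Since $E \leq H$ and $H$ is free abelian of rank $n$ with $H/E$ torsion-free of rank $1$, the subgroup $E$ is a direct summand of $H$, so $H = E \times \langle h\rangle$ for some $h \in H$, and the same reasoning gives $\overline H = E \times \langle \overline h_0\rangle$ for some $\overline h_0 \in \overline H$. Then $G/E$ is generated by the images of $h$ and $t$; I would check $G/E$ is abelian (both $h$ and $t$ normalise $E$, and $[t,h] \in [H,H]\cdot H = H$... more carefully, $[G:E] = [G:N][N:H][H:E]$ forces $[N:E]$ and hence $G/E$ to have Hirsch length $2$, while $G/E$ is visibly metabelian of that Hirsch length with $G/N$ finite cyclic and $N/E \cong \Z^2$, and one argues it is in fact $\cong \Z^2$ by noting $t$ acts on $N/E \cong \Z^2$ preserving the two complementary $\Z$'s $H/E$ and $\overline H/E$, hence by a matrix with eigenvalues that are roots of unity and determinant $\pm 1$; combined with $G/H$ and $G/\overline H$ being $\Z$ this pins down $G/E \cong \Z^2$).

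Next, with $G/E \cong \Z^2$ fixed, I would produce the required $\overline h$. The key point is that modulo $E$, both $[t,h]$ and $[\overline t, \overline h_0]$ must be trivial (since $G/E$ is abelian), so $[t,h]$ and $[\overline t,\overline h_0]$ lie in $E$. The plan is to use the structure $G = \overline H \rtimes \langle \overline t\rangle$ to write $t = \overline h' \overline t^{\,s}$ and $h = \overline e\, \overline h_0^{\,r}$ for suitable $\overline h', \overline e \in \overline H$ and integers $r,s$; here $s = \pm 1$ because $\overline t^{\,s}$ and $t$ generate the same $\Z$ quotient $G/N$... no, rather because $G/\overline H \cong \langle \overline t\rangle$ and the image of $t$ generates $G/N$ — and $N/\overline H$ is trivial since $N = H\overline H \supseteq \overline H$, wait $N \supseteq \overline H$ so $N = \overline H(H)$ and $G/\overline H \cong \Z$ is generated by the image of $t$, giving $s = \pm 1$; replacing $\overline t$ by $\overline t^{-1}$ if needed, $s = 1$. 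Then I compute $[t,h] = [\overline h' \overline t, \overline e \overline h_0^{\,r}]$ and simplify: since $E$ is central in $G/E$... no, $E$ need not be central in $G$. Instead I use that $\overline h', \overline e, \overline h_0$ all commute (they lie in $\overline H$), so $[t,h] = \overline h' \overline t\, \overline e \overline h_0^{\,r}\, \overline t^{-1}\overline h'^{-1}\, \overline h_0^{-r}\overline e^{-1}$; conjugation by $\overline t$ acts on $\overline H$ by $\overline A$, and I would track everything, eventually isolating that $[t,h] = \overline h_0^{\,r}$-conjugate expressions. The clean way: set $\overline h := $ the $\overline H$-component determined by requiring $[\overline t,\overline h] = [t,h]$, and verify $\overline H = E \times \langle \overline h\rangle$ — this works provided $\overline h$ is a generator of $\overline H/E$, equivalently provided the exponent $r$ above is $\pm 1$, which follows because $h$ generates $H/E$, $H/E$ and $\overline H/E$ map isomorphically onto $N/E \cap$ (their images)... more precisely $h$ maps to a generator of $N/\overline H \cong H/E \cong \Z$ and also (via $h = \overline e \overline h_0^{\,r}$) to $r$ times a generator, so $r = \pm 1$; adjust the sign of $\overline h_0$ to get $r=1$, then $\overline h := \overline h_0$ works after the translation by $\overline e$.

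Concretely, the cleanest route for the "moreover" is: given $h$ with $H = E \times \langle h\rangle$, write $h = f \overline t^{\,a} \cdot(\text{correction})$... actually since $h \in H \leq N = G$ and $G = \overline H \rtimes \langle \overline t\rangle$, write $h = \overline h_1 \overline t^{\,a}$ uniquely with $\overline h_1 \in \overline H$, $a \in \Z$; since $h \in \overline H E = \overline H$ modulo... hmm, $h$ need not lie in $\overline H$. But $h \in N$ and modulo $\overline H$ the image of $h$ lies in $N/\overline H \cong H/(H\cap \overline H) = H/E \cong \Z$, and since $h$ generates $H/E$ this image generates $N/\overline H$; meanwhile $t$ maps $G/\overline H \cong \Z$ onto a generator too, and $[G/\overline H : N/\overline H] = [G:N] =: d$, so the image of $h$ in $G/\overline H$ is $\pm d$ times that of $t$. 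Thus, after possibly replacing $h$ by $h^{-1}$ (allowed, as $H = E\times\langle h^{-1}\rangle$ too — but we are \emph{given} $h$, so instead track the sign), write $h = \overline h_1 t^{\pm d}$ with $\overline h_1 \in \overline H$. Similarly $t = \overline h_2 \overline t^{\,\pm 1}$ with $\overline h_2 \in \overline H$. Then $[t,h] = [\overline h_2 \overline t^{\,\pm1}, \overline h_1 t^{\pm d}]$, and expanding using that $\overline h_1,\overline h_2 \in \overline H$ commute and that conjugation by $\overline t$ on $\overline H$ is by $\overline A$, one gets $[t,h]$ equal to an explicit element of the form $[\overline t^{\,\pm 1}, \overline h_3]$ for a suitable $\overline h_3 \in \overline H$; because $[t,h] \in E$ (as $G/E$ is abelian) and $\overline A$ acts trivially on $\overline H/E$ (since $\overline H/E \cong \Z$ and $\overline t$ must act with finite order on it — indeed trivially, as it acts on $\overline H/E \hookrightarrow G/E \cong \Z^2$ centrally), one checks $\overline h_3$ can be taken in $\overline H$ with $\overline H = E\times\langle \overline h_3\rangle$; set $\overline h := \overline h_3^{\pm 1}$ to match $[\overline t,\overline h] = [t,h]$. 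The main obstacle I anticipate is this last bookkeeping: controlling the $\pm$ signs and the $d$-th powers so that the correction element $\overline h_3$ comes out to be a \emph{generator} of $\overline H/E$ (not a proper power), and verifying that conjugation by $\overline t$ really does act trivially on $\overline H/E$ — this rests on $G/E \cong \Z^2$, so the order of operations matters: establish $G/E \cong \Z^2$ first, deduce the trivial action on both $H/E$ and $\overline H/E$, and only then run the commutator computation.
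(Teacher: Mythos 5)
Your proposal has a genuine gap in the ``moreover'' step: you repeatedly assert (and use) that $t = \1h_2\,\1t^{\pm 1}$ for some $\1h_2 \in \1H$, equivalently that the image of $t$ in $G/\1H \cong \Z$ is a generator. This is false in general. For a concrete counterexample, take $G$ to be the integral Heisenberg group, $H$ the subgroup generated by the ``$a$'' and ``$c$'' coordinates, $t$ the ``$b$'' generator, $\1H$ the subgroup generated by ``$b$'' and ``$c$'', and $\1t$ the ``$a$'' generator: then $t$ lies in $\1H$, so its image in $G/\1H$ is $0$, not $\pm 1$. The correct constraint is that the images of $t$ and $h$ in $G/E \cong \Z^2$ form a basis (this is where the $\det = \pm 1$ observation lives); the image of $t$ in $G/\1H$ is then one entry of that $GL_2(\Z)$ change-of-basis matrix, which can be any integer coprime to the complementary entry. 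Your argument for $G/E \cong \Z^2$ is also not established: you gesture at Hirsch length and eigenvalue considerations, but Hirsch length $2$ does not force abelianness, and your ``$N/E \cong \Z^2$'' is asserted before commutativity is known. The clean route (the paper's) is immediate: $G/H$ and $G/\1H$ are both $\cong \Z$, hence abelian, so $G' \subseteq H$ and $G' \subseteq \1H$, giving $G' \subseteq H\cap\1H = E$; then $G/E$ is abelian and sits in $1 \to \Z \to G/E \to \Z \to 1$, hence $\cong \Z^2$.

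That said, your overall strategy (write $t$ and $h$ in $\1H\rtimes\langle\1t\rangle$ coordinates as $(\1k_2, a_2)$, $(\1k_1, a_1)$ and expand $[t,h]$) is salvageable and in fact genuinely different from the paper's. A direct computation gives $[t,h] = (\1A^{a_2}-I)\1k_1 - (\1A^{a_1}-I)\1k_2$, and since $\1A^a - I = (\1A - I)P_a(\1A)$ for an integer polynomial $P_a$, this equals $[\1t, \1m]$ with $\1m = P_{a_2}(\1A)\1k_1 - P_{a_1}(\1A)\1k_2 \in \1H$; because $\1A$ acts trivially on $\1H/E$ (as you correctly observe, once $G/E\cong\Z^2$ is in hand), $\1m \equiv a_2\1k_1 - a_1\1k_2 \pmod E$, and this is $\pm$ a generator of $\1H/E$ precisely by the determinant condition. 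So you never needed $a_2 = \pm 1$. The paper instead avoids this coordinate computation by working modulo the subgroup $\widehat E$ generated by $[G,E]$, where $G/\widehat E$ is $2$-step nilpotent and standard commutator identities reduce everything to the determinant condition, then uses the key claim $\widehat E = [\1t,E]$ to absorb the correction term into the choice of $\1h$. Both routes use the $\det = \pm 1$ fact as the crux; your route is more computational but bypasses the $\widehat E = [\1t,E]$ lemma. As written, though, the false ``$a_2 = \pm 1$'' claim and the incomplete abelianness argument mean the proof does not go through.
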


\begin{proof}
Throughout this proof, we will make use of the commutator identities
\begin{equation} \label{eq:comid}
[x,yz] = [x,y] \cdot y [x,z] y^{-1} \qquad \text{and} \qquad [xy,z] = x [y,z] x^{-1} \cdot [x,z].
\end{equation}

Note first that $N/\1H$ has finite index in $G/\1H \cong \Z$, and so $H/E \cong N/\1H \cong \Z$. This implies that $E$ is a direct factor of the free abelian group $H$, and so $H = E \times \langle h \rangle$ for some $h \in H$. Note also that as both $G/H \cong \Z$ and $G/\1H \cong \Z$ are abelian, it follows that $G' \subseteq H \cap \1H = E$ and so $G/E$ is abelian; together with the fact that $G/H \cong H/E \cong \Z$, this implies that $G/E \cong \Z^2$.

In order to show the last statement, we first claim that the subgroup $\widehat{E} \leq G$ generated by $[G,E]$ is equal to $[\1t,E]$. It is clear that $[\1t,e] \in \widehat{E}$ for all $e \in E$, so we only need to show that $\widehat{E} \subseteq [\1t,E]$. But since $E$ is abelian and normal, the first equation in \eqref{eq:comid} implies that $[\1t,ef] = [\1t,e] [\1t,f]$ for all $e,f \in E$, and so the map $E \to E, e \mapsto [\1t,e]$ is a group homomorphism; in particular, $[\1t,E]$ is a subgroup of $G$. It is therefore enough to express $[g,e]$ as a product of elements of the form $[\1t,f]$ or $[\1t,f]^{-1}$, where $f \in E$, for any given $g \in G$ and $e \in E$.

Now since $G/\1H = \langle \1t\1H \rangle$, we have $g = \1t^p \1k$ for some $p \in \Z$ and $\1k \in \1H$. As $E \subseteq \1H$ and $\1H$ is abelian, we have $[g,e] = [\1t^p,e]$. If $p \geq 0$, then the second equation in \eqref{eq:comid} implies that, by induction on $p$,
\begin{equation} \label{eq:tpe}
[\1t^p,e] = \prod_{i=1}^p \1t^{p-i} [\1t,e] \1t^{i-p} = \prod_{i=1}^p [\1t,\1t^{p-i} e \1t^{i-p}],
\end{equation}
which has the required form. If $p < 0$, then we have
\[
[\1t^p,e] = \1t^p [e,\1t^{-p}] \1t^{-p} = [ \1t^{p} e \1t^{-p}, \1t^{-p} ] = [ \1t^{-p}, \1t^{p} e \1t^{-p} ]^{-1},
\]
and so we are again done by \eqref{eq:tpe}. This proves that $\widehat{E} = [\1t,E]$, as claimed.

Now as $E$ is a direct factor of $H$, it is also a direct factor of $\1H$ by an analogous argument. Thus there exists $\1h \in \1H$ such that $\1H = E \times \langle \1h \rangle$. We now aim to modify $\1h$ so that we have $[t,h] = [\1t,\1h]$.

Note that we can write $\1t = t^q h^p e$ and $\1h = t^s h^r f$ for some $p,q,r,s \in \Z$ and $e,f \in E$. As $G/H = \langle tH \rangle$ and $H/E = \langle h E \rangle$, it follows that the pair $\{ h E, t E \}$ generates the group $G/E \cong \Z^2$; similarly, the pair $\{ \1h E, \1t E \}$ also generates $G/E$. This implies that $qr-ps = \det\begin{pmatrix} q&p\\s&r \end{pmatrix} \in \{ \pm 1 \}$. Note that $\1h^{-1} = t^{-s} h^{-r} f_0$ for some $f_0 \in E$; therefore, after replacing $\1h$ with $\1h^{-1}$ if necessary, we can assume that $qr-ps = 1$.

Now since $G/E$ is abelian, the group $G/\widehat{E}$ is $2$-step nilpotent, i.~e.\ every commutator in $G/\widehat{E}$ is central. Note also that $e\widehat{E}$ and $f\widehat{E}$ are central in $G/\widehat{E}$. In this case, \eqref{eq:comid} implies that
\[
[\1t,\1h]\widehat{E} = [t^qh^pe,t^sh^rf]\widehat{E} = [t,t]^{qs} [t,h]^{qr} [h,t]^{ps} [h,h]^{pr} \widehat{E} = [t,h]^{qr-ps} \widehat{E} = [t,h] \widehat{E},
\]
and so, as $\widehat{E} = [\1t,E]$, we have $[t,h] = [\1t,\1h] [\1t,g]$ for some $g \in E$. But as $[\1t,g],\1h \in \1H$ and as $\1H$ is abelian, the first equation in \eqref{eq:comid} then implies that $[t,h] = [\1t,\1h g]$. Since $\1H = E \times \langle \1h g \rangle$, we may replace $\1h$ with $\1h g$ to get $[t,h] = [\1t,\1h]$, as required.
\end{proof}

\begin{lem} \label{lem:iso=>iii}
Let $A,\1A \in GL_n(\Z)$, and suppose that $G(A,\Z^n) \cong G(\1A,\Z^n)$ and that $\1A$ is not conjugate to $A^{\pm 1}$ in $GL_n(\Z)$. Then there exists a matrix $C \in GL_{n-1}(\Z)$ of order $m_0 < \infty$ such that $A$ and $\1A$ are conjugate in $GL_n(\Z)$ to $\begin{pmatrix} 1 & \mathbf{0}^T \\ \mathbf{u} & C \end{pmatrix}$ and $\begin{pmatrix} 1 & \mathbf{0}^T \\ \mathbf{u} & C^q \end{pmatrix}$, respectively, for some $\mathbf{u} \in \Z^{n-1}$ and some $q \in \Z$ with $\gcd(q,m_0) = 1$.
\end{lem}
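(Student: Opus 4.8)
The plan is to recast the isomorphism as two semidirect-product decompositions of a single group and then invoke Lemma~\ref{lem:2ways}. Fix an isomorphism $\Phi\colon G(A,\Z^n) \to G(\1A,\Z^n)$ and set $G := G(\1A,\Z^n)$, $\1H := \langle \1x_1,\ldots,\1x_n\rangle$, $H := \Phi(\langle x_1,\ldots,x_n\rangle)$ and $s := \Phi(t)$. As recalled at the start of Section~\ref{sec:polyc}, $G = H \rtimes \langle s\rangle = \1H \rtimes \langle\1t\rangle$, both $H$ and $\1H$ are normal in $G$ and free abelian of rank $n$, $G/H \cong G/\1H \cong \Z$, and conjugation by $s$ on $H$ (respectively by $\1t$ on $\1H$) is given by $A$ (respectively $\1A$) in the basis $\{\Phi(x_i)\}_i$ (respectively $\{\1x_i\}_i$). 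First, if $\1H \subseteq H$ then $\1H = H$ (as $G/\1H \cong \Z$ is torsion-free while $H/\1H$ is a finite normal subgroup of it); comparing the two decompositions then gives $\1t = s^{\pm1}p$ for some $p \in H = \1H$, so $\1t$ acts on $\1H$ exactly as $s^{\pm1}$ does, and hence $\1A$ is conjugate to $A^{\pm1}$ in $GL_n(\Z)$ --- contrary to hypothesis; the symmetric argument excludes $H \subseteq \1H$. Thus neither of $H,\1H$ contains the other, so $N := H\1H$ (a subgroup of $G$, since $H \lhd G$) has $N/H \cong \1H/(H \cap \1H)$ a nontrivial --- hence finite-index --- subgroup of $G/H \cong \Z$, so that Lemma~\ref{lem:2ways} applies to $G = H \rtimes \langle s\rangle = \1H \rtimes \langle\1t\rangle$.

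Next I would extract the matrix form. Lemma~\ref{lem:2ways} supplies $E := H \cap \1H$, a rank-$(n-1)$ direct factor of both $H$ and $\1H$, say $H = E \times \langle h\rangle$ and $\1H = E \times \langle\1h\rangle$, with $G/E \cong \Z^2$ and $[s,h] = [\1t,\1h]$; moreover $E \lhd G$, so conjugation by $s$ and by $\1t$ preserves $E$, giving matrices $C$ and $\1C$ in $GL_{n-1}(\Z)$ with respect to a fixed basis of $E$. Since $G/E$ is abelian, $s$ and $\1t$ act trivially on $H/E$ and $\1H/E$, so relative to the bases $\{h\} \cup (\text{basis of } E)$ of $H$ and $\{\1h\} \cup (\text{basis of } E)$ of $\1H$, conjugation by $s$ and by $\1t$ is given by $\left(\begin{smallmatrix} 1 & \mathbf{0}^T \\ \mathbf{u} & C \end{smallmatrix}\right)$ and $\left(\begin{smallmatrix} 1 & \mathbf{0}^T \\ \1{\mathbf{u}} & \1C \end{smallmatrix}\right)$, where $\mathbf{u}$ and $\1{\mathbf{u}}$ are the coordinate vectors in the chosen basis of $E$ of $[s,h] \in E$ and $[\1t,\1h] \in E$. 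Since $[s,h] = [\1t,\1h]$ we get $\mathbf{u} = \1{\mathbf{u}}$, and passing back to the bases $\{\Phi(x_i)\}_i$ and $\{\1x_i\}_i$ shows that $A$ and $\1A$ are conjugate in $GL_n(\Z)$ to $\left(\begin{smallmatrix} 1 & \mathbf{0}^T \\ \mathbf{u} & C \end{smallmatrix}\right)$ and $\left(\begin{smallmatrix} 1 & \mathbf{0}^T \\ \mathbf{u} & \1C \end{smallmatrix}\right)$, respectively.

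It then remains to relate $C$ and $\1C$. In $G/E \cong \Z^2$ the images of $\{s,h\}$ and of $\{\1t,\1h\}$ are two bases, related by some $M = \left(\begin{smallmatrix} a & b \\ c & d \end{smallmatrix}\right) \in GL_2(\Z)$, say $\1t \equiv s^a h^b$ and $\1h \equiv s^c h^d \pmod E$. Because $h$ centralises $E$ (it lies in the abelian group $H \supseteq E$), conjugation by $\1t$ on $E$ equals conjugation by $s^a$, i.e.\ $\1C = C^a$; by the symmetric argument with the roles of the two decompositions exchanged, conjugation by $s$ on $E$ equals conjugation by $\1t^{a'}$, where $a'$ is the $(1,1)$-entry of $M^{-1}$, so $C = \1C^{a'} = C^{aa'}$ and hence $C^{aa'-1} = \id$. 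Since $M^{-1} = (\det M)^{-1}\left(\begin{smallmatrix} d & -b \\ -c & a \end{smallmatrix}\right)$ with $\det M \in \{\pm 1\}$, a short computation gives $aa' - 1 = \pm bc$, so $C^{bc} = \id$. Set $q := a$, so that $\1C = C^q$.

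Finally I would rule out $bc = 0$ and conclude. If $c = 0$ then $\1h \equiv h^{\pm 1} \pmod E$, so $\1h \in H$ and $\1H = E\langle\1h\rangle \subseteq H$, which we excluded. If $b = 0$ then $a,d \in \{\pm 1\}$; replacing $\1t$ by $\1t^{-1}$ (which replaces $\1A$ by $\1A^{-1}$ and does not affect the hypothesis) we may assume $a = 1$, so $\1t = s e_0$ with $e_0 \in E$, whence $\1t$ acts on the normal subgroup $H$ exactly as $s$ does, i.e.\ via $A$; writing $\1h = s^c h^d e_1$ with $e_1 \in E$ and --- after replacing $\1h$ by $\1h^{-1}$ if needed --- $d = 1$, an explicit commutator computation in $G(A,\Z^n)$ (using $s^j e s^{-j} = A^j e$ for $e \in H$) expresses $\1t\1h\1t^{-1} = \eta\,\1h$ with $\eta \in E$ whose coordinate vector differs from $\mathbf{u}$ by an element of $(I - C)\Z^{n-1}$, and so $\1A$ is conjugate in $GL_n(\Z)$ to $\left(\begin{smallmatrix} 1 & \mathbf{0}^T \\ \eta & C \end{smallmatrix}\right)$, hence (conjugating by $\left(\begin{smallmatrix} 1 & \mathbf{0}^T \\ \mathbf{v} & I \end{smallmatrix}\right)$ for a suitable $\mathbf{v}$) to $\left(\begin{smallmatrix} 1 & \mathbf{0}^T \\ \mathbf{u} & C \end{smallmatrix}\right)$, hence to $A$ --- again excluded. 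Therefore $bc \neq 0$; then $C$ has finite order $m_0$ dividing $bc$, and since $ad - bc = \det M = \pm 1$ we have $\gcd(a, bc) = 1$, so $\gcd(q, m_0) = \gcd(a, m_0)$ divides $\gcd(a, bc) = 1$. This is precisely the asserted form. The only genuinely computational point --- and the main obstacle --- is the exclusion of the case $b = 0$: one must check that the correction term $\eta$ arising from the passage to $\1t = s e_0$ and to $\1h$ differs from $\mathbf{u}$ only modulo $(I - C)\Z^{n-1}$, so that the two matrices of the displayed lower form are genuinely conjugate in $GL_n(\Z)$; everything else is bookkeeping with Lemma~\ref{lem:2ways} and with $2 \times 2$ integer matrices.
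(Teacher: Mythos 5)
Your proposal matches the paper's proof up through the key steps: you rule out $[G:N]=\infty$ using the hypothesis, invoke Lemma~\ref{lem:2ways} to obtain $E$, $h$, $\1h$ with $[s,h]=[\1t,\1h]$, and read off the matrices $\left(\begin{smallmatrix} 1 & \mathbf{0}^T \\ \mathbf{u} & C \end{smallmatrix}\right)$ and $\left(\begin{smallmatrix} 1 & \mathbf{0}^T \\ \mathbf{u} & \1C \end{smallmatrix}\right)$ with $\1C = C^a$ (the paper's $q$). Where you genuinely diverge is in showing that $C$ has finite order coprime to $a$. The paper does this directly: $E$ is central in $N=H\1H$ (both factors being abelian), so $t^m$ centralises $E$ where $m=[G:N]$, giving $C^m=I_{n-1}$; and $G/N\cong\Z/m\Z$ is generated by both $tN$ and $\1tN=t^qN$, forcing $\gcd(q,m)=1$. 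You instead introduce the change-of-basis matrix $M=\left(\begin{smallmatrix} a & b \\ c & d \end{smallmatrix}\right)\in GL_2(\Z)$ on $G/E$ and derive $C^{aa'-1}=I_{n-1}$ with $aa'-1=\pm bc$, which is correct but only informative when $bc\neq 0$ -- a degenerate case the paper never has to confront. Your exclusion of $c=0$ is fine, but the exclusion of $b=0$ as written does not work: replacing $\1h$ by $\1h^{-1}$ destroys the relation $[s,h]=[\1t,\1h]$ provided by Lemma~\ref{lem:2ways}, so the $\mathbf{u}$-entry you have already pinned down would no longer be $\mathbf{u}$, and the ``correction term $\eta$'' computation you gesture at is not actually carried out. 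The case is in fact easier than your sketch: if $b=0$ then $a,d\in\{\pm1\}$, so $\1C=C^{\pm1}$ and $\1A$ is represented by $\left(\begin{smallmatrix} 1 & \mathbf{0}^T \\ \mathbf{u} & C^{\pm1} \end{smallmatrix}\right)$; a direct check (conjugating $A^{-1}=\left(\begin{smallmatrix} 1 & \mathbf{0}^T \\ -C^{-1}\mathbf{u} & C^{-1} \end{smallmatrix}\right)$ first by $\mathrm{diag}(-1,I_{n-1})$ and then by $\left(\begin{smallmatrix} 1 & \mathbf{0}^T \\ \mathbf{u} & I_{n-1} \end{smallmatrix}\right)$) shows this is $GL_n(\Z)$-conjugate to $A^{\pm1}$, contradicting the hypothesis. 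With that fix your argument closes, but the detour through $M$, $a'$, and the $bc\neq0$ analysis is considerably longer than the paper's one-line observation about $E\leq Z(N)$ and $t^m\in N$.
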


\begin{proof}
Let $G = G(A,\Z^n) \cong G(\1A,\Z^n)$. Then $G = H \rtimes \langle t \rangle = \1H \rtimes \langle \1t \rangle$, where $H \cong \Z^n \cong \1H$ and $t,\1t \in G$ have infinite order, and where the maps $H \to H, h \mapsto tht^{-1}$ and $\1H \to \1H, h \mapsto \1th\1t^{-1}$ are represented by the matrices $A$ and $\1A$, respectively. Let $N = H\1H$, and note that $N/H \leq G/H \cong \Z$ and $N/\1H \leq G/\1H \cong \Z$.

Suppose first that $[G:N] = \infty$. Then $[G/H : N/H] = \infty$ and so $N/H$ is trivial, i.~e.\ $N = H$; similarly, $N = \1H$. But then $G/N \cong \Z$ is generated by each $tN$ and $\1tN$, implying that $tN = (\1tN)^\varepsilon$ for some $\varepsilon \in \{\pm 1 \}$, and so $\1t = t^\varepsilon h$ for some $h \in N$. Thus, as $N$ is abelian, the map $N \to N, g \mapsto \1tg\1t^{-1}$ is represented by $A^\varepsilon$, contradicting the fact that $\1A$ is not conjugate to $A^{\pm 1}$ in $GL_n(\Z)$.

Therefore, we must have $[G:N] = m < \infty$. Let $E = H \cap \1H$. It then follows from Lemma~\ref{lem:2ways} that $G/E$ is abelian, and that there exist $h \in H$ and $\1h \in \1H$ such that $H = E \times \langle h \rangle$, $\1H = E \times \langle \1h \rangle$, and $[t,h] = [\1t,\1h]$.

Let $\{ e_1,\ldots,e_{n-1} \}$ be a basis for $E$, so that $\{ h,e_1,\ldots,e_{n-1} \}$ and $\{ \1h,e_1,\ldots,e_{n-1} \}$ are bases for the free abelian groups $H$ and $\1H$, respectively. The fact that $G/E$ is abelian, and in particular that $[t,H] \subseteq E$ and $[\1t,\1H] \subseteq E$, implies that the maps $H \to H, k \mapsto tkt^{-1}$ and $\1H \to \1H, k \mapsto \1tk\1t^{-1}$ are represented -- with respect to these bases -- by matrices $\begin{pmatrix} 1 & \mathbf{0}^T \\ \mathbf{u} & C \end{pmatrix}$ and $\begin{pmatrix} 1 & \mathbf{0}^T \\ \mathbf{v} & D \end{pmatrix}$, respectively, for some $\mathbf{u},\mathbf{v} \in \Z^{n-1}$ and $C,D \in GL_{n-1}(\Z)$. The fact that $[t,h] = [\1t,\1h]$ implies that $\mathbf{u} = \mathbf{v}$, whereas since $\1t = t^q k$ for some $q \in \Z$ and $k \in H$, the fact that $H$ is abelian implies that $\1te\1t^{-1} = t^qet^{-q}$ for all $e \in E$, and so $D = C^q$.

It is therefore enough to show that $C$ has finite order $m_0$ coprime to $q$. But note that the subgroup $E = H \cap \1H$ is central in $N = H\1H$ since both $H$ and $\1H$ are abelian; as $[G:N] = m$, we have $t^m \in N$ and so $[t^m,e] = 1$ for all $e \in E$, implying that $C^m = I_{n-1}$, and so the order $m_0$ of $C$ is finite and divides $m$. Finally, the finite cyclic group $G/N \cong \Z/m\Z$ is a quotient of each $G/H = \langle tH \rangle$ and $G/\1H = \langle \1t\1H \rangle$, and so it is generated by each $tN$ and $\1t N = t^qN$. It follows that $\gcd(q,m) = 1$, and so $\gcd(q,m_0) = 1$, as required.
\end{proof}

\begin{lem} \label{lem:iii=>iso}
Let $q,m,n \in \Z$ be such that $m,n > 0$ and $\gcd(q,m) = 1$, let $\mathbf{u} \in \Z^{n-1}$, and let $C \in GL_{n-1}(\Z)$ be a matrix of order $m$. Write $A = \begin{pmatrix} 1 & \mathbf{0}^T \\ \mathbf{u} & C \end{pmatrix}$ and $\1A = \begin{pmatrix} 1 & \mathbf{0}^T \\ \mathbf{u} & C^q \end{pmatrix}$. Then $G(A,\Z^n) \cong G(\1A,\Z^n)$.
\end{lem}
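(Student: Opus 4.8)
The goal is to produce an explicit isomorphism between $G(A,\Z^n)$ and $G(\1A,\Z^n)$, where both are semidirect products $H \rtimes \langle t \rangle$ and $\1H \rtimes \langle \1t \rangle$ with $H \cong \1H \cong \Z^n$ and conjugation by $t$ (resp.\ $\1t$) acting by $A$ (resp.\ $\1A$). The key structural observation driving the choice of $A$ and $\1A$ is that both matrices fix the first basis vector and act by a finite-order matrix ($C$, resp.\ $C^q$) on the complementary $\Z^{n-1}$, with the same ``twisting'' column $\mathbf{u}$; this is exactly the data produced by Lemma~\ref{lem:2ways} and Lemma~\ref{lem:iso=>iii}, so the present lemma is essentially their converse, and the proof should run that construction in reverse.

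First I would set up coordinates. Write the generator of $H$ corresponding to the first basis vector as $h$, and let $E \leq H$ be the subgroup spanned by the remaining $n-1$ basis vectors, so $H = \langle h \rangle \times E$ and conjugation by $t$ fixes $h$ modulo $E$, sends $h \mapsto h \cdot \mathbf{x}^{\mathbf{u}}$ where $\mathbf{x}^{\mathbf{u}} \in E$, and acts on $E$ by $C$. Since $\gcd(q,m)=1$, pick $q' \in \Z$ with $qq' \equiv 1 \pmod m$, so that $(C^q)^{q'} = C$. The candidate isomorphism $\Phi\colon G(A,\Z^n) \to G(\1A,\Z^n)$ should fix $E$ pointwise (identifying the $E$-subgroups of the two groups via the chosen bases), send $h$ to a suitable element $\1h \in G(\1A,\Z^n)$ of the form $\1h = \1t^{\,a} h' e$ (with $h'$ the analogous first generator on the target side, $a$ an integer, $e \in E$), and send $t$ to an element of the form $\1t^{\,b} h'^{\,c} e'$. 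The integers $a,b,c$ are forced by two requirements: (1) the images of $h$ and $t$ together with $E$ must generate $G(\1A,\Z^n)$, which amounts to the $2\times 2$ integer matrix $\bigl(\begin{smallmatrix} b & c \\ a & * \end{smallmatrix}\bigr)$ governing the $G/E \cong \Z^2$ part being unimodular; and (2) conjugation by the image of $t$ must act on the image of $h$ and on $E$ correctly, i.e.\ reproduce $A$. Matching the action on $E$ forces the exponent of $\1t$ in $\Phi(t)$ to be $\equiv q' \pmod m$ (so that $(\1A)^{q'}$ restricts to $C$ on $E$); a natural choice is $\Phi(t) = \1t^{\,q'} h'^{\,c}$ for an appropriate $c$, together with $\Phi(h) = \1t^{\,a} h' e$ chosen so that $qa' - (q')\cdot(\text{something}) = \pm1$ — one clean option is $\Phi(h) = h'^{\,?}\cdot(\ldots)$; the precise bookkeeping is exactly the inverse of the computation in the proof of Lemma~\ref{lem:iso=>iii}.

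The verification then has two parts. One must check that $\Phi$ respects all the defining relations: the $E$-with-$E$ and $E$-with-$h$ commutation relations are immediate since $E$ is central in $H$ and maps into an abelian subgroup; the relation $tht^{-1} = h\,\mathbf{x}^{\mathbf{u}}$ and $tet^{-1} = \mathbf{x}^{C\mathbf{v}}$ (for $e = \mathbf{x}^{\mathbf{v}} \in E$) require a short computation using that $\1t h' \1t^{-1} = h'\mathbf{x}^{\mathbf{u}}$, $\1t e \1t^{-1} = \mathbf{x}^{C^q\mathbf{v}}$, and the fact that $G(\1A,\Z^n)/E$ is abelian so commutators involving $\1t$ and $h'$ can be pushed into $E$ and tracked through the $2$-step-nilpotent quotient $G/\widehat{E}$ exactly as in Lemma~\ref{lem:2ways}. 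Then one constructs the inverse homomorphism symmetrically (using $q$ in place of $q'$, since $q$ is the inverse of $q'$ mod $m$ as well) and checks the two composites are the identity on the generating sets $E \cup \{h,t\}$ and $E \cup \{h',\1t\}$; alternatively, show $\Phi$ is surjective (the images generate, by the unimodularity condition) and that source and target have the same Hirsch length $n+1$ with $\Phi$ injective on $H$, forcing injectivity. I expect the main obstacle to be purely organisational: choosing the three exponents $a,b,c$ (and the correcting element $e \in E$) so that \emph{simultaneously} the $G/E$-matrix is unimodular, the $\1t$-exponent of $\Phi(t)$ is a valid inverse of $q$ mod $m$, and the $[t,h] = [\Phi(t),\Phi(h)]$ identity holds on the nose rather than just modulo $\widehat{E}$ — the last adjustment, absorbing a leftover element of $\widehat{E} = [\1t,E]$ into $e$, is precisely the final paragraph of the proof of Lemma~\ref{lem:2ways} run backwards. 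Once the correct $\Phi$ is written down, all relation-checking is routine matrix arithmetic with $C^q$ and $C^{q'}$.
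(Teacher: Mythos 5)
Your plan is, at bottom, the same argument as the paper's, but the paper packages it more economically and your write-up stops short of a proof. The paper does not construct an explicit homomorphism $\Phi\colon G(A,\Z^n)\to G(\1A,\Z^n)$ and check relations. Instead, working entirely \emph{inside} $G = G(A,\Z^n)$, it picks $p,r$ with $mp+qr=1$, defines $\varphi\colon G\to\Z$ by $\varphi(t)=r$, $\varphi(h)=m$, $\varphi(e_i)=0$, sets $\1t = t^q h^p$ and $\1H = \ker\varphi = \langle t^{-m}h^r,e_1,\ldots,e_{n-1}\rangle$, verifies $\1H\cong\Z^n$ is abelian (using $C^m=I$) and $G=\1H\rtimes\langle\1t\rangle$, and then applies Lemma~\ref{lem:2ways} to replace the generator $t^{-m}h^r$ of $\1H$ by some $\1h$ with $[t,h]=[\1t,\1h]$. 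In the resulting basis of $\1H$, conjugation by $\1t$ is given by $\1A$, and the isomorphism $G\cong G(\1A,\Z^n)$ follows from the fact (recorded in Section~\ref{ssec:semidir}) that a semidirect product $K\rtimes\langle t\rangle$ is determined by $K$ and the conjugation action. This sidesteps the well-definedness/relation-checking step entirely, which is precisely the step you identify as ``the main obstacle'' and then defer as ``routine.''

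The genuine gap in your submission is not in the idea but in the execution: the candidate isomorphism is never pinned down --- you leave literal placeholders (``$h'^{\,?}\cdot(\ldots)$'', ``an appropriate $c$'', ``$qa'-(q')\cdot(\text{something})=\pm1$''), so there is nothing one can actually verify. You correctly identify the two ingredients that make the argument go (Bézout's identity from $\gcd(q,m)=1$, and the correction step from Lemma~\ref{lem:2ways} to force $[t,h]=[\1t,\1h]$), and if you were to solve for the exponents as the paper does you would recover essentially the inverse of the paper's $\1t=t^q h^p$, $\1H=\langle t^{-m}h^r,E\rangle$ construction. But as written the proposal is a plan for a proof rather than a proof: to turn it into one you should either fill in the exponents and do the relation check honestly, or --- better, and this is what the paper does --- abandon the explicit map, exhibit the second semidirect-product decomposition of $G(A,\Z^n)$ itself, and invoke uniqueness of semidirect products.
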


\begin{proof}
Since $\gcd(q,m) = 1$, there exist $p,r \in \Z$ such that $mp+qr = 1$. Let $G = G(A,\Z^n) = H \rtimes \langle t \rangle$, where $H \cong \Z^n$ and $\langle t \rangle \cong \Z$, so that the map $H \to H, k \mapsto tkt^{-1}$ is represented by the matrix $A$ with respect to a basis $\{ h,e_1,\ldots,e_{n-1} \}$ of $H$. We then have $[t,H] \subseteq E$, where $E = \langle e_1,\ldots,e_{n-1} \rangle$, implying that the map $\varphi$, defined by setting $\varphi(t) = r$, $\varphi(h) = m$ and $\varphi(e_i) = 0$ for each $i$, extends to a homomorphism $\varphi\colon G \to \Z$. Moreover, $\varphi$ is surjective since $\varphi(\1t) = 1$, where $\1t = t^q h^p$, and the kernel of $\varphi$, which is easily seen to be equal to $\1H := \langle t^{-m}h^r,e_1,\ldots,e_{n-1} \rangle$, is abelian since $C^{-m} = I_{n-1}$, and consequently $[t^{-m}h^r,e_i] = [t^{-m},e_i] = 1$ for all $i$. It is clear that $\1H/E = \langle t^{-m}h^rE \rangle \cong \Z$ and consequently $\1H \cong \Z^n$, and that $G = \1H \rtimes \langle \1t \rangle$. Moreover, the group $N := H\1H = \langle h,e_1,\ldots,e_{n-1},t^m \rangle$ has index $m < \infty$ in $G$.

It then follows from Lemma~\ref{lem:2ways} that there exists $\1h \in \1H$ such that $[t,h] = [\1t,\1h]$ and such that $\{ \1h,e_1,\ldots,e_{n-1} \}$ is a basis of the free abelian group $\1H$. It is easy to check that, with respect to this basis, the map $\1H \to \1H, k \mapsto \1tk\1t^{-1}$ is represented by the matrix $\1A$. Thus $G(A,\Z^n) \cong G(\1A,\Z^n)$, as required.
\end{proof}

\begin{rmk} \label{rmk:2step}
In the proofs of Lemmas \ref{lem:iso=>iii} and \ref{lem:iii=>iso}, we have constructed subgroups $E \lhd N$ of $G = G(A,\Z^n)$ such that $E$ is central in $N$, such that $G/E$ (and so $N/E$) is abelian, and such that $N$ has finite index in $G$. It follows that any group $G$ satisfying the assumptions of these Lemmas (and so the condition~\ref{it:main-polyc} in Theorem~\ref{thm:main}) must be virtually $2$-step nilpotent.

However, such a group $G$ need not be virtually abelian: indeed, for $A = \begin{pmatrix} 1&0\\1&1 \end{pmatrix}$, the group $G(A,\Z^2)$ is the integral Heisenberg group, which is not virtually abelian.
\end{rmk}

\subsection{Conjugacy over \texorpdfstring{$\Q$}{Q}}

Here we discuss the implications of condition~\ref{it:main-polyc} in Theorem~\ref{thm:main}. In particular, we prove that any matrices $A$ and $\1A$ satisfying this condition are conjugate in $GL_n(\Q)$: see Lemma~\ref{lem:conj/Q}. As shown in Example~\ref{ex:polyc}, such an $\1A$ need not be conjugate to $A^{\pm 1}$ in $GL_n(\Z)$.

\begin{lem} \label{lem:fo-conj}
Let $D \in GL_n(\Q)$ be a matrix of order $m < \infty$, and let $q \in \Z$ be such that $\gcd(q,m) = 1$. Then $D$ and $D^q$ are conjugate in $GL_n(\Q)$.
\end{lem}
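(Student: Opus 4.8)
The plan is to reduce the statement to a question about $\mathbb{Q}$-representations of the finite cyclic group $\Z/m\Z$. Since $D \in GL_n(\Q)$ satisfies $D^m = I_n$, the matrix $D$ makes $\Q^n$ into a module over the group algebra $\Q[\Z/m\Z]$, where the generator of $\Z/m\Z$ acts by $D$; call this module $M_D$. Likewise $D^q$ defines a module $M_{D^q}$, where the generator acts by $D^q$. The claim that $D$ and $D^q$ are conjugate in $GL_n(\Q)$ is precisely the claim that $M_D \cong M_{D^q}$ as $\Q[\Z/m\Z]$-modules. Since $\gcd(q,m) = 1$, the map $k \mapsto qk$ is an automorphism $\sigma$ of $\Z/m\Z$, and $M_{D^q}$ is just $M_D$ with the $\Z/m\Z$-action precomposed with $\sigma$; that is, $M_{D^q} \cong \sigma^* M_D$. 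So the statement to prove becomes: twisting a $\Q[\Z/m\Z]$-module by an automorphism of $\Z/m\Z$ yields an isomorphic module.

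First I would decompose $\Q[\Z/m\Z] \cong \prod_{d \mid m} \Q(\zeta_d)$ via the Chinese-remainder/cyclotomic factorisation $x^m - 1 = \prod_{d \mid m} \Phi_d(x)$, where $\Phi_d$ is the $d$-th cyclotomic polynomial, which is irreducible over $\Q$. Correspondingly any finitely generated $\Q[\Z/m\Z]$-module splits as a direct sum $M = \bigoplus_{d \mid m} M_d$, where $M_d$ is a $\Q(\zeta_d)$-vector space and the generator acts on $M_d$ as multiplication by a fixed primitive $d$-th root of unity $\zeta_d$. Then I would compute the effect of the twist $\sigma$ on this decomposition: an automorphism $\sigma$ of $\Z/m\Z$ sending a generator $g$ to $g^q$ (with $\gcd(q,m)=1$) permutes the simple summands, but in fact fixes each isotypic piece $M_d$ setwise because $d$ is determined by the order of the element acting, and $g^q$ has the same order as $g$ in each quotient $\Z/d\Z$ (again because $\gcd(q,d) = 1$). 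On $M_d$, the twist replaces the action of $g$ by $\zeta_d \mapsto \zeta_d^q$, i.e.\ it precomposes the $\Q(\zeta_d)$-structure with the field automorphism of $\Q(\zeta_d)/\Q$ sending $\zeta_d \mapsto \zeta_d^q$ — which exists precisely because $\gcd(q,d) = 1$, by standard cyclotomic Galois theory. Since a $\Q(\zeta_d)$-vector space, viewed as a $\Q$-vector space equipped with a $\Q(\zeta_d)$-action, is isomorphic (as such) to its twist by any field automorphism — two vector spaces over a field are isomorphic iff they have the same dimension — each $M_d$ is isomorphic to its $\sigma$-twist, and assembling these gives $M_D \cong M_{D^q}$.

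Alternatively, and perhaps more cleanly for a short write-up, I would avoid the module language and argue directly with Galois theory: the characteristic (or better, rational canonical) form of $D$ over $\Q$ is a block-diagonal matrix whose blocks are companion matrices of powers of cyclotomic polynomials $\Phi_d$ with $d \mid m$; raising $D$ to the $q$-th power, on the block for $\Phi_d$, replaces each eigenvalue $\zeta_d$ by $\zeta_d^q$, which is again a primitive $d$-th root of unity, hence the block for $\Phi_d$ is sent (up to $GL(\Q)$-conjugacy) to another block for the same $\Phi_d$ of the same size; since the rational canonical form is a complete conjugacy invariant over $\Q$, it follows that $D^q$ has the same rational canonical form as $D$, so the two are conjugate in $GL_n(\Q)$.

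The only real subtlety — the step I would flag as the crux — is checking that raising to the $q$-th power does not merge or split the $\Phi_d$-isotypic components or change their multiplicities: this is exactly where $\gcd(q,m) = 1$ is essential, since it guarantees $\lambda \mapsto \lambda^q$ restricts to a bijection on the set of primitive $d$-th roots of unity for every $d \mid m$ (equivalently, it induces an element of $\Gal(\Q(\zeta_d)/\Q)$), so the number and sizes of blocks for each $\Phi_d$ are preserved. Everything else is routine linear algebra and the standard structure theory of $\Q[\Z/m\Z]$; I expect the formal proof to be no more than a paragraph or two.
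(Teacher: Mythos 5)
Your argument is correct and rests on the same core ingredients as the paper's proof: semisimplicity of $\Q[\Z/m\Z]$ (Maschke in characteristic zero) and the Galois action $\zeta_d \mapsto \zeta_d^q$ on cyclotomic fields, with $\gcd(q,m)=1$ guaranteeing that this map permutes primitive $d$-th roots of unity. The difference is one of packaging. The paper decomposes $\Q^n$ into $D$-irreducible summands, reduces to the case of an irreducible $D$, and then invokes the fact that two irreducible matrices over $\Q$ are conjugate if and only if they have the same characteristic polynomial (citing Dummit--Foote, Theorem~12.17), finishing by showing $\chi_{D^q} = \sigma(\chi_D) = \chi_D$. Your module-theoretic version instead decomposes the group algebra $\Q[\Z/m\Z] \cong \prod_{d\mid m}\Q(\zeta_d)$ and compares $\Q(\zeta_d)$-dimensions of the isotypic pieces; this sidesteps the separate conjugacy-iff-equal-characteristic-polynomial citation, since the simple $\Q[\Z/m\Z]$-module with $\Phi_d$-annihilator is unique and the multiplicities are just dimensions. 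Your rational-canonical-form variant is essentially the paper's argument restated (note that for finite order the minimal polynomial $\prod_{d\mid m'}\Phi_d$ is squarefree, so the ``powers of cyclotomic polynomials'' are all first powers; no harm done). Either of your write-ups would be a correct and roughly equally short substitute for the paper's.
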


\begin{proof}
Note first that $D$ and $D^q$ have the same invariant subspaces of $\Q^n$. Indeed, clearly if a subspace of $\Q^n$ is $D$-invariant then it is also $D^q$-invariant; the converse holds since $D$ and $D^q$ generate the same cyclic subgroup in $GL_n(\Q)$ (as $\gcd(q,m) = 1$), implying that $D = (D^q)^r$ for some $r \in \Z$. Now as $\Q$ has characteristic zero, any reducible representation of $\Z/m\Z$ over $\Q$ is decomposable, implying that we can write $\Q^n = P_1 \oplus \cdots \oplus P_\ell$, where each $P_i$ is $D$-invariant (and so $D^q$-invariant) and has no proper non-zero $D$-invariant (and so $D^q$-invariant) subspace. It is therefore enough to show that $D|_{P_i}$ and $D^q|_{P_i}$ are conjugate in $GL(P_i)$: that is, it is enough to consider the case when $D$ (and so $D^q$) is \emph{irreducible}, i.~e.\ when $\Q^n$ has no proper non-zero $D$-invariant subspace.

Now two irreducible matrices are conjugate in $GL_n(\Q)$ if and only if they have the same characteristic polynomial: see \cite[Theorem~12.17]{dummit-foote}. Thus, let $\lambda_1,\ldots,\lambda_n \in \C$ be the eigenvalues of $D$ (counted with multiplicities), so that $\chi_D(X) = \prod_{i=1}^n (X-\lambda_i)$. Since $D^m = I_n$, each $\lambda_i$ is an $m$-th root of unity. Let $\zeta_m$ be a primitive $m$-th root of unity, and let $\sigma \in \Gal(\Q(\zeta_m)/\Q)$ be defined by $\sigma(\zeta_m) = \zeta_m^q$. If we write $\chi_D(X) = X^n + a_{n-1}X^{n-1} + \cdots + a_0$, we then have
\begin{align*}
\chi_{D^q}(X) &= \prod_{i=1}^n (X-\lambda_i^q) = \prod_{i=1}^n (X-\sigma(\lambda_i)) = X^n + \sigma(a_{n-1})X^{n-1} + \cdots + \sigma(a_0) \\ &= X^n + a_{n-1}X^{n-1} + \cdots + a_0 = \chi_D(X)
\end{align*}
since $a_0,\ldots,a_{n-1} \in \Q$. Thus $\chi_D(X) = \chi_{D^q}(X)$, as required.
\end{proof}

\begin{lem} \label{lem:conj/Q}
Let $q,m,n \in \Z$ be such that $m,n > 0$ and $\gcd(q,m) = 1$, let $\mathbf{u} \in \Z^{n-1}$, and let $C \in GL_{n-1}(\Z)$ be a matrix of order $m$. Then the matrices $A = \begin{pmatrix} 1 & \mathbf{0}^T \\ \mathbf{u} & C \end{pmatrix}$ and $\1A = \begin{pmatrix} 1 & \mathbf{0}^T \\ \mathbf{u} & C^q \end{pmatrix}$ are conjugate in $GL_n(\Q)$.
\end{lem}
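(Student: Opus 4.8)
The plan is to conjugate $A$ by a block matrix of the form $Q = \begin{pmatrix} 1 & \mathbf{0}^T \\ \mathbf{w} & P \end{pmatrix} \in GL_n(\Q)$, where $P \in GL_{n-1}(\Q)$ is a conjugator of $C$ and $C^q$ and $\mathbf{w} \in \Q^{n-1}$ is an auxiliary vector chosen so that the first columns of $QAQ^{-1}$ and $\1A$ agree. A direct computation (using $Q^{-1} = \begin{pmatrix} 1 & \mathbf{0}^T \\ -P^{-1}\mathbf{w} & P^{-1} \end{pmatrix}$) gives
\[
QAQ^{-1} = \begin{pmatrix} 1 & \mathbf{0}^T \\ \mathbf{w} + P\mathbf{u} - PCP^{-1}\mathbf{w} & PCP^{-1} \end{pmatrix},
\]
so once $PCP^{-1} = C^q$ is arranged, we have $QAQ^{-1} = \1A$ if and only if $(I_{n-1} - C^q)\mathbf{w} = (I_{n-1} - P)\mathbf{u}$. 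The whole argument thus reduces to producing a conjugator $P$ of $C$ and $C^q$ for which $(I_{n-1} - P)\mathbf{u}$ lies in the image of $I_{n-1} - C^q$, so that this linear system is solvable for $\mathbf{w}$.

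To arrange this I would use that $C$, having finite order, is semisimple over $\Q$, so that $\Q^{n-1} = V \oplus W$ with $V = \ker(C - I_{n-1})$ and $W = \operatorname{im}(C - I_{n-1})$ a $C$-invariant complement on which $C - I_{n-1}$ is invertible; since $\gcd(q,m) = 1$, the matrix $C^q$ has the same order $m$ and (as in the proof of Lemma~\ref{lem:fo-conj}) the same invariant subspaces as $C$, so also $V = \ker(C^q - I_{n-1})$, $W = \operatorname{im}(C^q - I_{n-1})$, and $C^q - I_{n-1}$ is invertible on $W$. Applying Lemma~\ref{lem:fo-conj} to the finite-order matrices $C|_W$ and $C^q|_W = (C|_W)^q$ in $GL(W)$ yields $P_W \in GL(W)$ with $P_W (C|_W) P_W^{-1} = C^q|_W$, and I would then set $P := \id_V \oplus P_W \in GL_{n-1}(\Q)$. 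This $P$ conjugates $C$ to $C^q$ and, crucially, fixes $V$ pointwise.

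With this choice, writing $\mathbf{u} = \mathbf{u}_V + \mathbf{u}_W$ with $\mathbf{u}_V \in V$ and $\mathbf{u}_W \in W$, one gets $(I_{n-1} - P)\mathbf{u} = (I_{n-1} - P)\mathbf{u}_W \in W = \operatorname{im}(I_{n-1} - C^q)$, since $P$ fixes $\mathbf{u}_V$ and preserves $W$. Hence there is $\mathbf{w} \in \Q^{n-1}$ solving $(I_{n-1} - C^q)\mathbf{w} = (I_{n-1} - P)\mathbf{u}$, and the corresponding $Q$ conjugates $A$ to $\1A$. The one genuine subtlety — the step I would be most careful about — is precisely the choice of $P$: an arbitrary conjugator of $C$ and $C^q$ need not send $(I_{n-1} - P)\mathbf{u}$ into $\operatorname{im}(I_{n-1} - C^q)$, the obstruction living in the fixed subspace $V$, so it is essential that $P$ act trivially on $V$, which is exactly what the decomposition $\Q^{n-1} = V \oplus W$ provides. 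Everything else (invertibility of $Q$ and the final check $QAQ^{-1} = \1A$) is routine block-matrix arithmetic.
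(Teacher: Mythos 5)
Your proof is correct and takes essentially the same approach as the paper's: both decompose $\Q^{n-1}$ into the $1$-eigenspace of $C$ and a $C$-invariant complement on which $I_{n-1}-C$ (equivalently $I_{n-1}-C^q$) is invertible, resolve the off-diagonal obstruction by adjusting within that complement, and then invoke Lemma~\ref{lem:fo-conj} for the restriction of $C$ to the complement. The paper phrases this as a change of basis bringing $A$ and $\1A$ to a common normal form $\begin{pmatrix} 1 & \mathbf{0}^T \\ \mathbf{u}_Q & I_N \end{pmatrix} \oplus C|_R$ (resp.\ with $C^q|_R$), whereas you write down the block-triangular conjugator directly and identify the solvability condition on $\mathbf{w}$ --- the same argument from a slightly more explicit angle.
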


\begin{proof}
We have a direct sum decomposition $\Q^n = \langle \mathbf{w} \rangle \oplus P$ such that $A\mathbf{w}-\mathbf{w} = \1A\mathbf{w}-\mathbf{w} = \mathbf{u} \in P$ and such that $P \leq \Q^n$ is an $A$-invariant and $\1A$-invariant subspace with $A|_P$ and $\1A|_P$ represented by $C$ and $C^q$, respectively. Since $\Q$ is a field of characteristic zero and $C \in GL_{n-1}(\Z)$ has finite order, we have a decomposition $P = Q \oplus R$, where $Q$ and $R$ are $C$-invariant and $Q$ is the $1$-eigenspace of $C$ (in particular, $C|_R$ has no eigenvalues equal to one). As $C$ and $C^q$ generate the same cyclic subgroup of $GL_{n-1}(\Q)$, it follows that $Q$ is also the $1$-eigenspace of $C^q$.

Now we can write $\mathbf{u} = \mathbf{u}_Q + \mathbf{u}_R$ for some $\mathbf{u}_Q \in Q$ and $\mathbf{u}_R \in R$. Since $C|_R$ has no eigenvalues equal to $1$, it follows that the matrix $I_{\dim(R)}-C|_R$ is non-singular, and so $\mathbf{u}_R = \mathbf{v}_R - C\mathbf{v}_R$ for some $\mathbf{v}_R \in R$. But then we have
\[
A(\mathbf{w}+\mathbf{v}_R) - (\mathbf{w}+\mathbf{v}_R) = (A\mathbf{w}-\mathbf{w}) + (C\mathbf{v}_R-\mathbf{v}_R) = \mathbf{u} - \mathbf{u}_R = \mathbf{u}_Q.
\]
Similarly, there exists $\1{\mathbf{v}}_R \in R$ such that $\1A(\mathbf{w}+\1{\mathbf{v}}_R) - (\mathbf{w}+\1{\mathbf{v}}_R) = \mathbf{u}_Q$.

Consider the direct sum decompositions $\Q^n = S \oplus R = \1S \oplus R$, where $S = \langle \mathbf{w} + \mathbf{v}_R \rangle \oplus Q$ and $\1S = \langle \mathbf{w} + \1{\mathbf{v}}_R \rangle \oplus Q$. By construction, $S$ and $R$ are $A$-invariant, and the $A$-actions on $S$ and $R$ are represented by matrices $\begin{pmatrix} 1 & \mathbf{0}^T \\ \mathbf{u}_Q & I_N \end{pmatrix}$ and $C|_R$, respectively, where $N = \dim(Q)$. Similarly, $\1S$ and $R$ are $\1A$-invariant, and the $\1A$-actions on $\1S$ and $R$ are represented by matrices $\begin{pmatrix} 1 & \mathbf{0}^T \\ \mathbf{u}_Q & I_N \end{pmatrix}$ and $C^q|_R$, respectively. In order to show that $A$ and $\1A$ are conjugate in $GL_n(\Q)$, it is therefore enough to show that $C|_R$ and $C^q|_R$ are conjugate in $GL(R)$. As by construction $C|_R$ and $C$ have the same order, and so the order of $C|_R$ is finite and coprime to $q$, the conjugacy of $C|_R$ and $C^q|_R = (C|_R)^q$ in $GL(R)$ follows from Lemma~\ref{lem:fo-conj}.
\end{proof}

Finally, we show that the matrices $A$ and $\1A$ in Lemma~\ref{lem:conj/Q} need not be conjugate in $GL_n(\Z)$.

\begin{ex} \label{ex:polyc}
There exists a matrix $C \in GL_n(\Z)$ of order $m < \infty$ such that, for some $q \in \Z$ with $\gcd(q,m) = 1$, neither $\begin{pmatrix} 1 & \mathbf{0}^T \\ \mathbf{0} & C \end{pmatrix}$ nor $\begin{pmatrix} 1 & \mathbf{0}^T \\ \mathbf{0} & C \end{pmatrix}^{-1}$ is conjugate to $\begin{pmatrix} 1 & \mathbf{0}^T \\ \mathbf{0} & C^q \end{pmatrix}$ in $GL_{n+1}(\Z)$. More specifically, we could take here a matrix $C \in GL_{36}(\Z)$ of order $m = 37$, and $q = 6$.
\end{ex}

\begin{proof}
We first note that if $\det(I-C) \neq 0$, then it is enough to show that $C$ and $C^q$ are not conjugate in $GL_n(\Z)$. Indeed, if $C$ was such a matrix with $\det(I-C) \neq 0$, then the $1$-eigenspace (in $\Q^{n+1}$) of each $A = \begin{pmatrix} 1 & \mathbf{0}^T \\ \mathbf{0} & C \end{pmatrix}$ and $\1A = \begin{pmatrix} 1 & \mathbf{0}^T \\ \mathbf{0} & C^q \end{pmatrix}$ would be $1$-dimensional spanned by the vector $(1,0,\ldots,0)$. Therefore, if we had $BA^{\pm 1}B^{-1} = \1A$ for some $B \in GL_{n+1}(\Z)$ then $B$ would have to fix this $1$-dimensional subspace, and so $B = \begin{pmatrix} u & \mathbf{v}^T \\ \mathbf{0} & W \end{pmatrix}$ for some $u \in \{\pm 1\}$, $\mathbf{v} \in \Z^n$ and $W \in GL_n(\Z)$. We could then compute that $\1A = BA^{\pm 1}B^{-1} = \begin{pmatrix} 1 & \mathbf{v}^TC^{\pm 1}W^{-1}-\mathbf{v}^TW^{-1} \\ \mathbf{0} & WC^{\pm 1}W^{-1} \end{pmatrix}$, and so $C^q = WC^{\pm 1}W^{-1}$. Thus, showing that $C^{\pm 1}$ and $C^q$ are not conjugate is enough.

Now given any irreducible polynomial $f(X) \in \Z[X]$ of degree $n$, a theorem of Latimer and MacDuffee \cite{latimer-macduffee} says that there exists a bijection between the $GL_n(\Z)$-conjugacy classes of matrices in $M_n(\Z)$ with characteristic polynomial $f(X)$ and the ideal class group of the field $\Q[X]/(f(X))$, where $M_n(\Z)$ is the set of $n \times n$ matrices with integer entries. Here, the \emph{ideal class group} $C(K)$ of a number field $K$ is the quotient of the group (under multiplication) of non-zero fractional ideals of $K$ by the subgroup of principal fractional ideals, where a \emph{fractional ideal} of $K$ is just a subset of the form $\alpha\mathfrak{a}$ for $\alpha \in K^\times$ and $\mathfrak{a} \unlhd \mathcal{O}_K$ an ideal in the ring of integers of $K$. The bijection is given by associating the class of a fractional ideal $\mathfrak{b}$ to the conjugacy class of the matrix representing the map $\mathfrak{b} \to \mathfrak{b}, r \mapsto rX$ under some choice of basis of $\mathfrak{b} \cong \Z^n$ as a free abelian group.

Let $K = \Q(\zeta_m) = \Q[X]/(f(X))$, where $\zeta_m$ is a primitive $m$-th root of unity for some odd prime $m \in \Z$, and $f(X)$ is the $m$-th cyclotomic polynomial. Fix an embedding $K \subset \C$; note that, since $K/\Q$ is Galois, $K$ is fixed under complex conjugation. We write $K_+$ for $K \cap \R = \Q(\zeta_m + \zeta_m^{-1})$, and write $h_-(K) := \frac{|C(K)|}{|C(K_+)|}$ for the \emph{relative class number} of $K$. It can be shown that the map $\mathfrak{b} \mapsto \mathfrak{b}\mathcal{O}_K$ (where $\mathfrak{b}$ is a fractional ideal in $K_+$) induces an injective homomorphism of abelian groups $i\colon C(K_+) \to C(K)$ \cite[Theorem 4.14]{washington}, and in particular $h_-(K) = |C(K)/i(C(K_+))| \in \Z$. Moreover, it is easy to verify that the action of $\Gal(K/\Q)$ on $K$ makes $C(K)$ into a $\Z[\Gal(K/\Q)]$-module with $i(C(K_+))$ being a submodule, and that the complex conjugation $\iota \in \Gal(K/\Q)$ induces the automorphism of $C(K)/i(C(K_+))$ given by $c \mapsto -c$ (see \cite{schoof}). Therefore, the induced map $\iota_\ast\colon C(K) \to C(K)$ is non-identity whenever $c \neq -c$ for some $c \in C(K)/i(C(K_+))$, and so $\iota_\ast$ is non-identity whenever $h_-(K)$ is not a power of $2$.

Now let $m \in \Z$ be an odd prime such that $h_-(\Q(\zeta_m))$ is not a power of $2$ and such that $-1$ is a square in $\Z/m\Z$. Such primes exist, the smallest one being $m = 37$: we have $h_-(\Q(\zeta_{37})) = 37$ \cite[Tables, \S3]{washington} and $6^2 \equiv -1 \pmod{37}$. Let $K = \Q(\zeta_m)$, let $q \in \Z$ be such that $q^2 \equiv -1 \pmod{m}$, and define $\sigma \in \Gal(K/\Q)$ by setting $\sigma(\zeta_m) = \zeta_m^q$; note that $\sigma^2$ is the complex conjugation $\iota\colon K \to K$. Since $h_-(K)$ is not a power of $2$, there exists a fractional ideal $\mathfrak{c} \subset K$ such that $[\mathfrak{c}] \neq \iota_*([\mathfrak{c}]) = \sigma_*^2([\mathfrak{c}])$; this implies that $\sigma_*([\mathfrak{c}]) \notin \{ [\mathfrak{c}], \iota_*([\mathfrak{c}]) \}$.

Let $C \in M_{m-1}(\Z)$ be the matrix whose $GL_{m-1}(\Z)$-conjugacy class corresponds to the class $\iota_*([\mathfrak{c}])$ under the Latimer--MacDuffee bijection described above. Then $C^m = I_{m-1}$ since $\zeta_m^m = 1$ in $K$, and in particular $C \in GL_{m-1}(\Z)$. Moreover, since the characteristic polynomial of $C$ is the $m$-th cyclotomic polynomial $f(X) = \frac{1-X^m}{1-X}$, which is not a multiple of $1-X$, it follows that $\det(I-C) \neq 0$. Finally, $C^q$ represents the map $\sigma^2(\mathfrak{c}) \to \sigma^2(\mathfrak{c}), r \mapsto r\zeta_m^q = r\sigma(\zeta_m)$, and so it also represents the map $\sigma(\mathfrak{c}) \to \sigma(\mathfrak{c}), r' \mapsto r'\zeta_m$ (take $r = \sigma(r')$ to see this); similarly, $C^{-1} = C^{q^2}$ represents the map $\mathfrak{c} \to \mathfrak{c}, r'' \mapsto r''\zeta_m$. Since $[\sigma(\mathfrak{c})] \notin \{ [\mathfrak{c}], [\iota(\mathfrak{c})] \}$, it follows that $C^{\pm 1}$ is not conjugate to $C^q$ in $GL_{m-1}(\Z)$, as required.
\end{proof}

\section{The proof of Theorem~\ref{thm:main}} \label{sec:pfmain}

In this section, we combine the results from previous sections to prove Theorem~\ref{thm:main}.

\begin{proof}[Proof of Theorem~\ref{thm:main}]
Suppose first that $n = \1n$ and that one of the conditions~\ref{it:main-gen}--\ref{it:main-polyc} holds.

If \ref{it:main-gen} is true, then either $\1A = BAB^{-1}$ and $\1L = BL$, or $\1A = BA^{-1}B^{-1}$ and $\1L = BAL$. It is then easy to see from the presentation \eqref{eq:pres} that we have an isomorphism $\Phi\colon G(A,L) \to G(\1A,\1L)$, defined by $\Phi(\mathbf{x}^{\mathbf{v}}) = \mathbf{x}^{B\mathbf{v}}$ for $\mathbf{v} \in \Z^n$ and $\Phi(t) = t$ in the former case, and $\Phi(\mathbf{x}^{\mathbf{v}}) = \mathbf{x}^{B\mathbf{v}}$ for $\mathbf{v} \in \Z^n$ and $\Phi(t) = t^{-1}$ in the latter case.

If \ref{it:main-metab} is true, then we may use the isomorphisms $G(A,L) \cong G(A^{-1},AL)$ and $G(\1A,\1L) \cong G(\1A^{-1},\1A\1L)$, described in the previous paragraph, to assume (without loss of generality) that $L = \1L = \Z^n$. If $AL = \Z^n$ as well, then without loss of generality we may assume that $\varepsilon = 1$; otherwise, we have $\lvert \det(A) \rvert > 1$ and $\lvert \det(A) \rvert^\varepsilon = \lvert \det(\1A) \rvert \geq 1$, implying again that $\varepsilon = 1$. By Lemmas \ref{lem:ascHNN} and \ref{lem:metab-iso-new}, we then have $G(A,L) = K \rtimes \langle t \rangle$, and there exists an injective homomorphism $\phi\colon K \to \Q^n$ such that $\phi(K) = \bigcup_{j \in \Z} A^j(\Z^n)$ and such that $\phi(t g t^{-1}) = A \phi(g)$ for all $g \in K$. Now set $\1\phi = B \circ \phi\colon K \to \Q^n$. We may then verify that $B^{-1}\1\phi(tgt^{-1}) = AB^{-1}\1\phi(g)$, and so $\1\phi(tgt^{-1}) = \1A\1\phi(g)$, for all $g \in K$. Moreover, we have
\[
B^{-1}\1\phi(K) = \phi(K) = \bigcup_{j \in \Z} A^j(\Z^n) = \bigcup_{j \in \Z} A^jB^{-1}(\Z^n) = B^{-1} \bigcup_{j \in \Z} BA^jB^{-1}(\Z^n),
\]
and so $\1\phi(K) = \bigcup_{j \in \Z} \1A^j(\Z^n)$. Since a semidirect product $K \rtimes \langle t \rangle$ (where $t$ has infinite order) is determined uniquely up to isomorphism by the group $K$ and the map $K \to K, g \mapsto tgt^{-1}$, it follows from Lemma~\ref{lem:metab-iso-new} that $G(A,L) \cong G(\1A,\1L)$.

Finally, if \ref{it:main-polyc} is true, then we may use the isomorphisms $G(A,\Z^n) \cong G(BAB^{-1},\Z^n)$ for $B \in GL_n(\Z)$, described above, to assume (without loss of generality) that $A = \begin{pmatrix} 1 & \mathbf{0}^T \\ \mathbf{u} & C \end{pmatrix}$ and $\1A = \begin{pmatrix} 1 & \mathbf{0}^T \\ \mathbf{u} & C^q \end{pmatrix}$. Then $G(A,L) \cong G(\1A,\1L)$ by Lemma~\ref{lem:iii=>iso}.

Conversely, suppose that $G := G(A,L) \cong G(\1A,\1L)$. If $G$ is not metabelian, then, by Lemma~\ref{lem:polycyclic}, we have $L \neq \Z^n \neq AL$ and $\1L \neq \Z^{\1n} \neq \1A\1L$. It then follows from Proposition~\ref{prop:main-large} that $n = \1n$ and the condition~\ref{it:main-gen} is true. If $G$ is metabelian but not polycyclic, then, by Lemma~\ref{lem:polycyclic}, exactly one of $L$ and $AL$ is equal to $\Z^n$, and exactly one of $\1L$ and $\1A\1L$ is equal to $\Z^{\1n}$. It then follows from Proposition~\ref{prop:main-metab}, along with the isomorphisms $G(A,L) \cong G(A^{-1},AL)$ and $G(\1A,\1L) \cong G(\1A^{-1},\1A\1L)$, that $n = \1n$ and that the condition~\ref{it:main-metab} holds. Finally, if $G$ is polycyclic, then (again by Lemma~\ref{lem:polycyclic}) we have $L = AL = \Z^n$ and $\1L = \1A\1L = \Z^{\1n}$; moreover, $n+1 = \1n+1$ is the Hirsch length of $G$, and so $n = \1n$. It then follows from Lemma~\ref{lem:iso=>iii} that either condition~\ref{it:main-gen} or condition~\ref{it:main-polyc} is true.
\end{proof}

\bibliographystyle{amsalpha}
\bibliography{../../all}

\end{document}